\newenvironment{enumeratee}[1][]
  {\enumerate[label=\textnormal{(E\arabic*)}, itemsep=4pt, #1]}
  {\endenumerate}
\newenvironment{enumerateh}[1][]
  {\enumerate[align=left,
leftmargin=*,
labelsep*=1em,
topsep=6pt,
label=\textup{(H\arabic*)},
itemsep=6pt,
#1]}
  {\endenumerate}
\newenvironment{enumeratev}[1][]
  {\enumerate[align=left,
leftmargin=*,
labelsep*=1em,
label=\textup{(V\arabic*)},
#1]}
  {\endenumerate}
\def\namedlabel#1#2{\begingroup
    #2%
    \def\@currentlabel{#2}%
    \phantomsection\label{#1}\endgroup
}
\numberwithin{equation}{section}
\theoremstyle{plain}
\newtheorem{theorem}{Theorem}[section]
\newtheorem{proposition}[theorem]{Proposition}
\newtheorem{lemma}[theorem]{Lemma}
\theoremstyle{definition}
\newtheorem{definition}[theorem]{Definition}
\newtheorem{remark}[theorem]{Remark}
\newcommand{\R}{\mathbb{R}}
\renewcommand{\P}{\mathbb{P}}
\newcommand{\Y}{\mathbb{Y}}
\newcommand{\ind}{\mathbbm{1}}
\providecommand{\cl}[1]{\mathcal{#1}}
\newcommand{\pe}{\phantom{={}}}
\renewcommand{\phi}{\varphi}
\renewcommand{\epsilon}{\varepsilon}
\newcommand{\e}{\mathrm{e}}
\newcommand{\dd}[1]{\mathop{}\!\mathrm{d}#1}
\newcommand{\ddd}{\mathrm{d}}
\newcommand{\dmu}{\,\mathrm{d}\mu}
\newcommand{\tif}{\tilde f}
\newcommand{\tio}{\tilde \Omega}
\newcommand{\tiy}{\tilde y}
\newcommand{\tit}{\tilde T}
\newcommand{\tia}{\tilde A}
\newcommand{\sem}{\lrc{T(t),\, t \geq 0}}
\newcommand{\sems}{\lrc{S(t),\, t \geq 0}}
\newcommand{\sss}{S}
\newcommand{\sst}{T}
\newcommand{\tisem}{\lrc{\tilde T(t),\, t \geq 0}}
\newcommand{\tisemy}{\lrc{\tilde T_{\Y}(t),\, t \geq 0}}
\newcommand{\lo}{L^{1}(\Omega)}
\newcommand{\wo}{W^{1}(\Omega)}
\newcommand{\linf}{L^{\infty}(\Omega)}
\newcommand{\invm}{f_{\diamond}}
\newcommand{\nuw}{\mathop{}\!{\nu(\ddd w)}}
\newcommand{\nuv}{\mathop{}\!{\nu(\ddd v)}}
\newcommand{\mes}{\mathop{}\!\ell}
\newcommand{\mess}{\mathop{}\!\ell^{*}}
\newcommand{\ltio}{L^1_{\omega}(\tio)}
\newcommand{\wtio}{W^1_{\omega}(\tio)}
\newcommand{\normtio}[1]{\normt{#1}_{\ltio}}
\newcommand{\normo}[1]{\norm{#1}_{\lo}}
\newcommand{\lino}[1]{\mathcal{L}(#1)}
\newcommand{\set}[1]{\lrc{#1}}
\newcommand{\exts}{\Y}
\newcommand{\tm}{k}
\newcommand{\tms}{k^*}
\DeclareMathOperator{\leb}{leb}
\newcommand{\lebm}{\mathrm{leb}}
\DeclareMathOperator*{\esssup}{ess\,sup\,}
\DeclarePairedDelimiter{\lrp}{\lparen}{\rparen}
\DeclarePairedDelimiter{\lrb}{\lbrack}{\rbrack}
\DeclarePairedDelimiter{\lrc}{\lbrace}{\rbrace}
\DeclarePairedDelimiter{\abs}{\lvert}{\rvert}
\DeclarePairedDelimiter{\norm}{\lVert}{\rVert}
\DeclarePairedDelimiter\ceil{\lceil}{\rceil}
\DeclarePairedDelimiter\floor{\lfloor}{\rfloor}
\newcommand{\normt}{\@ifstar\@normts\@normt}
\newcommand{\@normts}[1]{%
  \left|\mkern-1.5mu\left|\mkern-1.5mu\left|
   #1
  \right|\mkern-1.5mu\right|\mkern-1.5mu\right|
}
\newcommand{\@normt}[2][]{%
  \mathopen{#1|\mkern-1.5mu#1|\mkern-1.5mu#1|}
  #2
  \mathclose{#1|\mkern-1.5mu#1|\mkern-1.5mu#1|}
}
\renewcommand{\leq}{\leqslant}
\renewcommand{\geq}{\geqslant}
\newcounter{myhypo}
\renewcommand\themyhypo{(H\arabic{myhypo})}
\newtcolorbox{hypo}[1][]{
  breakable,
  enhanced,
  %nobeforeafter,
  before={\medskip\noindent},
  after={},
  top=0pt,
  bottom=0pt,
  colback=white,
  boxrule=0pt,
  boxsep=0pt,
  left=40pt,
  right=0pt,
  leftrule=0pt,
  colframe=white,
  outer arc=0pt,
  overlay={
    \node[inner sep=0pt,anchor=west]
    %\node[inner sep=-1pt,anchor=west]
    at (frame.west)
    {\refstepcounter{myhypo}\themyhypo\label{#1}};
  }, % (H1) w~pionowym środku
}
\title[Lord Kelvin's method of images approach to the Rotenberg model]{Lord Kelvin's method of images approach to the Rotenberg model and its asymptotics}
\author[A. Gregosiewicz]{Adam Gregosiewicz}
\address{%
  Institute of Mathematics, Polish Academy of Sciences, ul.
  Sniadeckich 8, 00-656 Warsaw, Poland}
\address{%
  Lublin University of Technology,\ ul.~Nadbystrzycka 38A,\ 20-618 Lublin,
  Poland}
 \email{a.gregosiewicz@pollub.pl}
\keywords{Method of images, Strongly continuous semigroup, Cell population
  dynamics, Partial differential equations}
\thanks{This research was partly supported by Polish National Science Centre
  grant 2014/15/\ N/ST1/03110.}
\begin{document}

\begin{abstract}
We study a~mathematical model of cell populations dynamics proposed by
M.~Rotenberg~\cite{rotenberg} and investigated by
M.~Boulanouar~\cite{boulanouar}.
Here, a~cell is characterized by her maturity and speed of maturation.
The growth of cell populations is described by a~partial differential equation
with a~boundary condition.
In the first part of the paper we exploit semigroup theory approach and apply
Lord Kelvin's method of images in order to give a~new proof that the model is
well posed.
Next, we use a semi-explicit formula for the semigroup related to the model
obtained by the method of images in order to give growth estimates for the
semigroup.
The main part of the paper is devoted to the asymptotic behaviour of the
semigroup.
We formulate conditions for the asymptotic stability of the semigroup in the
case in which the average number of viable daughters per mitosis equals one.
To this end we use methods developed by K.~Pichór and
R.~Rudnicki~\cite{pichrud}.
\end{abstract}

\maketitle

\section{Introduction}
\label{sec:introduction}

In the Rotenberg model of cell populations dynamics~\cite{rotenberg} a~cell is
characterized by two variables, its maturity and speed of maturation. 
We assume that the maturity is a~real number \( x \) that belongs to the
interval \( I \coloneqq (0,1) \) and the speed of maturation \( v \) belongs to
the~set \( V \coloneqq (a,b) \), where \( a \) and \( b \) are nonnegative real
numbers such that \( a < b < +\infty \).
Growth of the~cells' population density is governed by the partial differential
equation
\begin{equation}
\label{eq:pde}
\frac{\partial f}{\partial t} = -v \frac{\partial f}{\partial x},
\end{equation}
where \( f = f(x,v,t) \) with \( t \geq 0 \) is the cells' density at
\( (x,v) \) at time \( t \).
In this model a~cell starts maturing at \( x = 0 \) and divides reaching
\( x = 1 \), and the boundary condition
\[
vf(0,v,t) = p \int_V w \tm(w,v) f(1,w,t) \dd w
\]
describes the reproduction rule.
Here \( \tm \) satisfies
\begin{equation*}
\int_V \tm(w,v) \dd v = 1
\end{equation*}
for any \( w \in V \), and \( V \ni v \mapsto \tm(w,v) \) is the probability
density of daughter velocity, conditional on \( w \) being the velocity of the
mother.
Furthermore, it is assumed that \( p \geq 0 \) is the average number of viable
daughters per mitosis.
However, see~\cite{boulanouar}, it may be also important to consider the case
where there are cells that degenerate in the sense that theirs daughters inherit
mother's velocity.
Such situation is described by the boundary condition
\[
f(0,v,t) = q f(1,v,t),
\]
where \( q \geq 0 \) is the average number of viable daughters per mitosis.
Therefore, we combine these two cases and assume that the reproduction rule is
characterized by the boundary condition
\begin{equation}
\label{eq:ogolny-bc}
vf(0,v,t) = p \int_V w \tm(w,v) f(1,w,t) \dd w + q v f(1,v,t), \qquad v \in V.
\end{equation}

It is well known, see~\cite[II.1.2]{goldstein}, that the well-posedness of the
problem~\eqref{eq:pde}-\eqref{eq:ogolny-bc} may be rephrased in terms of the
semigroups theory as follows: The problem is well-posed if and only if the
operator
\[
f \mapsto -v \frac{\partial f}{\partial x}
\]
with domain consisting of functions that are absolutely continuous with respect
to \( x \) and satisfy~\eqref{eq:ogolny-bc} is the generator of a~strongly
continuous semigroup in the space of absolutely integrable functions.

In the first part of this paper, in Section~\ref{sec:generation-theorem}, we
give a~new proof of the generation theorem of Boulanouar~\cite[Theorem 2.2,
Theorem 3.1]{boulanouar}.
To this end we use Lord Kelvin's method of images.
(For detailed introduction to the method of images see~\cite{bobrowski10a} and
references given there.
More examples may be found in \cite{bobrowski10b,bobgre,bobgremur}.)
As a~by-product we obtain a semi-explicit formula for the semigroup
\( \sst = \sem \) related to the Rotenberg model.
Moreover, this formula allows us to provide, in
Section~\ref{sec:growth-estimates}, growth estimates for the Rotenberg
semigroup.

Section~\ref{sec:asymptotic-stability} is devoted to the asymptotic behaviour of
the semigroup.
Boulanouar proved~\cite[Theorem~6.1]{boulanouar} that if \( p + q > 1 \), then
properly rescaled Rotenberg semigroup converges in the uniform topology to
a~rank one projection under some conditions on \( k \).
(For precise statement, see Theorem~\ref{thm:boulanouara}.)
We study the model in the case \( p+q \leq 1 \); this case is also of biological
interest since in multicellular organisms the number of cells does not grow in
an unrestricted way, as in the case \( p + q > 1 \).

We start by noting that in the case \( p + q = 1 \) the Rotenber semigroup is
composed of Markov operators -- this remark allows us to use the tools of the
rich theory of Markov semigroups (see for example~\cite{lasota,pichrud}).
Then, using recent results of Pichór and Rudnicki~\cite{pichrud}, we prove that,
for a fairly large class of kernels \( \tm \), there is an invariant density
\( f_{*} \) for the Rotenberg semigroup such that for all other densities
\( f \) we have
\begin{equation}
\label{as}
\lim_{t\to \infty} \norm{ T(t)f - f_* } = 0 ,
\end{equation}
in an appropriate \( L^1 \)-type norm.
Interestingly, in this case there is a direct formula connecting \( f_{*} \)
with the stationary density for the kernel \( k \).
Moreover, we show that if \( p + q < 1 \), then the operators forming the
Rotenberg semigroup converge to zero in the strong operator topology; if,
additionally, \( a > 0 \), the same is true in the operator norm.
The last two statements are reflections of the fact that in the case
\( p + q < 1 \) the cell population gradually dies out.
In the case \( a > 0 \) all parts of the population die out uniformly fast.
In the case \( a = 0 \), cells that mature very slowly survive much longer than
the remaining cells and so the population dies out non-uniformly.

Our main result, combining Theorems \ref{thm:asymptotic} and \ref{thm:asymp-spadek} may be rephrased as follows.

\begin{theorem}
\label{thm:main}
Let \( T \) be the semigroup related to the Rotenberg model.
\begin{enumerate}
  \item Suppose that \( p > 0 \) and \( p+q = 1 \).
Assume that there exists a~unique, up to an equivalence class, stationary
density for the kernel \( \tm \), that is, a~nonnegative function \( \invm \) on
\( V \) with \( \int_V \invm = 1 \), satisfying
\begin{equation*}
\invm(v) = \int_V \tm(w,v) \invm(w) \dd w
\end{equation*}
for almost every \( v \in V \).
If \( \invm \) is strictly positive almost everywhere and the function
\[
v \mapsto v^{-1} \invm(v)
\]
is integrable on \( V \), then~\eqref{as} holds with \( f_{*} \) defined as
\begin{equation*}
f_{*}(v) \coloneqq \frac{v^{-1} \invm(v)}{ \int_V w^{-1} \invm(w) \dd w },
\qquad v \in V.
\end{equation*}
  \item Suppose that \( p + q < 1 \).
Then
\begin{equation*}
\lim_{t\to \infty } \norm{ T(t)f } = 0
\end{equation*}
holds for any \( f \) that is integrable on \( I \times V \).
Moreover, if \( a > 0 \), then
\[
\lim_{t\to \infty} \norm{ T(t) } = 0.
\]
\end{enumerate}
\end{theorem}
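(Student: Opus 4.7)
The plan splits into the two parts of the statement, corresponding to Theorem~\ref{thm:asymptotic} and Theorem~\ref{thm:asymp-spadek} in the body.

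For part (i), I first check that when \( p+q=1 \) the semigroup \( \sem \) is Markov on the positive cone of \( \lo \). Integrating~\eqref{eq:pde} in \( x \) and \( v \) and using~\eqref{eq:ogolny-bc} together with \( \int_V \tm(w,\cdot)\dd v = 1 \) gives
\[
\frac{\ddd}{\ddd t}\normo{T(t)f} = (p+q-1)\int_V v\, f(1,v,t)\dd v,
\]
which vanishes when \( p+q=1 \). The invariant density \( \fs \) is then identified by solving the stationary problem: \( T(t)\fs=\fs \) combined with~\eqref{eq:pde} forces \( \partial_x \fs = 0 \), and~\eqref{eq:ogolny-bc} becomes, after using \( p=1-q \) and \( p>0 \),
\[
v\fs(v) = \int_V w\,\tm(w,v)\fs(w)\dd w,
\]
so that \( g(v)\coloneqq v\fs(v) \) is a stationary density for \( \tm \). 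Uniqueness of \( \invm \) forces \( g = c\,\invm \), and normalisation in \( L^{1}(I\times V) \) fixes \( c^{-1} = \int_V w^{-1}\invm(w)\dd w \), reproducing the claimed formula.

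The technical heart of part (i) is upgrading the existence of \( \fs \) to the asymptotic stability~\eqref{as}. Here I invoke the criterion of Pichór and Rudnicki~\cite{pichrud}: for a Markov semigroup with a strictly positive invariant density, it suffices to verify a partial-integral (overlapping-supports) condition for the transition kernel of \( T(t) \) at some \( t>0 \). The semi-explicit formula for \( T(t) \) produced by the method of images in Section~\ref{sec:generation-theorem} is essential here, because it expresses \( T(t) \) as a series of boundary reflections, each involving integration against \( \tm \); after sufficiently many reflections, the velocity variable mixes via iterates of \( \tm \), and almost-everywhere positivity of \( \invm \) furnishes the required lower-bound function. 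I expect this verification to be the main obstacle: the factors of \( v \) and \( w \) introduced by the boundary operator must be tracked carefully through the iteration, and the integrability assumption on \( v\mapsto v^{-1}\invm(v) \) is precisely what keeps the construction inside \( L^{1} \) and provides the finite normalising constant.

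For part (ii), the same mass identity now yields \( \frac{\ddd}{\ddd t}\normo{T(t)f}\leq 0 \), with strict inequality whenever the boundary flux is positive. Strong convergence to zero follows by combining the non-existence of a nonzero stationary density (the above eigenvalue relation, integrated over \( v \), contradicts \( p+q<1 \)) with a density-and-uniform-bound reduction: on the dense subset of initial data supported away from \( x=0 \) and from \( v=a \), the semi-explicit formula from Section~\ref{sec:generation-theorem} makes the decay of \( \normo{T(t)f} \) transparent, and the contractivity \( \normo{T(t)}\leq 1 \) then propagates this convergence to all of \( \lo \). For the operator-norm assertion under \( a>0 \), I read off from the semi-explicit representation that every trajectory returns to \( x=1 \) within time \( 1/a \); hence by time \( t \) it has undergone at least \( \lfloor at\rfloor \) boundary events, each contracting the \( L^{1} \)-norm by a factor at most \( p+q<1 \). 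This yields an exponential bound \( \normo{T(t)}\leq (p+q)^{\lfloor at\rfloor} \), which tends to zero. When \( a=0 \) arbitrarily slow particles may persist arbitrarily long, so the uniform bound breaks down and only the strong statement survives.
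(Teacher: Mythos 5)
Your outline of part (i) follows the paper's route: establish the Markov property when \( p+q=1 \), identify the invariant density \( \fs \) by reducing the stationary equation to the eigenvalue problem for the kernel operator \( K \), verify partial integrality, and invoke the Pichór--Rudnicki criterion (Theorem~\ref{thm:pich}). Two small inaccuracies: the partial-integrality verification in the paper requires only \emph{two} boundary reflections (it extracts the kernel part of \( T(2b^{-1}) = (Q_1+P_1)^2 \), keeping \( Q_1^2 \)), not an indefinite number; and the verification uses condition~\ref{item:density} (through Lemma~\ref{lem:getosc-calkowosc}), not the integrability condition~\ref{item:integrable}. The role of~\ref{item:integrable} is only to make the normalising constant \( \int_V w^{-1}\invm(w)\,\dd w \) finite so that \( \fs\in\lo \); it has nothing to do with the partial-integral lower bound. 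With those corrections, part (i) is essentially the paper's argument.

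In part (ii) the \( a>0 \) case is exactly the paper's: the exponential bound \( \norm{T(t)}\leq(p+q)^{\lfloor ta\rfloor} \) (Theorem~\ref{thm:norma-polgrupy-leb}) gives convergence in operator norm. The \( a=0 \) case, however, has a genuine gap. The ``density-and-uniform-bound reduction'' you propose does not go through as sketched: even if \( f \) is supported on \( \{v>\epsilon\} \), once a boundary event occurs the kernel \( \mes(\cdot,v) \) redistributes mass over \emph{all} of \( V \), including arbitrarily small velocities, so the solution is not supported away from \( v=0 \) for \( t>0 \) and the ``every trajectory hits \( x=1 \) within \( 1/\epsilon \)'' counting breaks down. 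Likewise, non-existence of an invariant density does not by itself give strong convergence to zero for a substochastic semigroup. The paper's proof is genuinely different: when \( p+q<1 \), inequality~\eqref{eq:norma-ext-ogr} with \( \omega=0 \) shows the boundary extension \( \tif \) is integrable over the \emph{entire} half-strip \( \tio=(-\infty,1)\times V \); since \( \norm{T(nb^{-1})f}_{\lo} = \int_V\int_{-nvb^{-1}}^{1-nvb^{-1}}\abs{\tif(x,v)}\,\dd x\,\dd v \) is the mass of \( \tif \) on a slice drifting to \( -\infty \) (for each \( v>0 \)), Lebesgue dominated convergence gives the result. This global integrability of the image extension under \( p+q<1 \) is the idea missing from your argument.
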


In the last part, in Section~\ref{sec:disc-assumpt}, we discuss relations
between Boulanouar's assumptions~\cite[Theorem~6.1]{boulanouar} on \( \tm \)
with these in Theorem~\ref{thm:main}.

\section{Generation theorem}
\label{sec:generation-theorem}

As in Introduction we consider \( I \coloneqq (0,1) \) as a~measure space with
the Lebesgue measure, denoted \( \lebm \), and fix real numbers \( a, b \) such
that \( 0 \leq a < b < +\infty \).
We also let \( V \subseteq (a,b) \) and introduce a~measure \( \nu \) on
\( V \).
From the biological point of view the most interesting cases are when \( V \)
equals \( (a,b) \) or is its discrete subset (the underlying measure \( \nu \)
being the Lebesgue measure or the counting measure, respectively).
However, in our generation theorem we do not need to assume that, and we can
work in the abstract setup.
Generalizations of the Rotenberg model in the discrete case are discussed
in~\cite{banasiak15,banasiak16}.

\begin{figure}
    \centering
    \subfloat[Continuous case.]{
        %\begin{minipage}[c]{0.45\textwidth}
        \centering\includegraphics[width=0.29\textwidth]{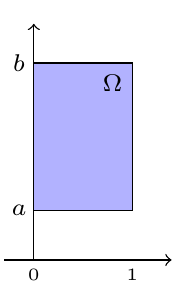}
        %\caption{Continuous case.}
        %\end{minipage}
        \label{fig:cont}}
    \qquad\qquad\qquad
    \subfloat[Discrete case.]{
        %\begin{minipage}[c]{0.45\textwidth}
        \centering\includegraphics[width=0.29\textwidth]{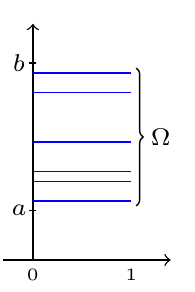}
        %\caption{Discrete case.}
        %\end{minipage}
        \label{fig:disc}}
      \caption{The set \( \Omega \).}
\label{fig:omega}
\end{figure}
We denote
\[
\Omega \coloneqq I \times V,
\]
see Figure~\ref{fig:omega}, and introduce \( \lo \) as the space of (equivalence
classes of) absolutely integrable real functions on \( \Omega \) with respect to
the product measure \( \mu = \mathrm{leb} \times \nu \).
We also denote the standard \( L^1 \)-norm by \( \norm{ \, \cdot \, }_{\lo} \).
Furthermore, we let \( \wo \) to be the space of (equivalence classes of)
functions \( f \in \lo \) satisfying:
\begin{enumerate}
  \item for almost every \( v \in V \) the function \( I \ni x \mapsto f(x,v) \)
is weakly differentiable,
  \item the function \( \Omega \ni (x,v) \mapsto v \partial_x f(x,v) \) belongs
to \( \lo \), where \( \partial_x f \) is the weak derivative of \( f \) with
respect to \( x \).
\end{enumerate}
By the Sobolev embedding theorem, if \( f \in \wo \), then for almost every
\( v \in V \) the function \( I \ni x \mapsto f(x,v) \) has a~unique
representant which is continuous up to the boundary of \( I \).
Hence, in particular we can speak about \( f(0,v) \) or \( f(1,v) \).

Let \( \tm\colon V\times V \to [0,+\infty) \) be a~nonnegative
\( (\nu \times \nu) \)-measurable real function such that
\begin{equation}
\label{eq:rozklad-k}
\int_V \tm(w,v) \nuv = 1, \qquad w \in V.
\end{equation}
Then we define the operator \( A \) in \( \lo \) by
\begin{equation}
\label{eq:gen}
Af(x,v) \coloneqq -v \partial_x f(x,v), \qquad (x,v) \in \Omega.
\end{equation}
We let the domain \( D(A) \) of \( A \) to be composed of functions
\( f \in \wo \) satisfying the boundary condition
\begin{equation}
\label{eq:bc}
v f(0,v) = p \int_V w \tm(w,v) f(1,w) \nuw + q v f(1,v)
\end{equation}
for almost every \( v \in V \), where \( p,q \) are fixed nonnegative real
numbers such that \( p + q > 0 \).
For simplicity of notation, for a~given \( v \in V \) we introduce the measure
\( \mes(\cdot,v) = \mes_{\nu}(\cdot,v) \) on \( V \) by the formula
\begin{equation}
\label{eq:mes}
\mes(\dd w,v) \coloneqq p wv^{-1} \tm(w,v) \nu(\dd w) + q \delta_v(\dd w),
\end{equation}
where \( \delta_v \) is the Dirac measure at \( v \).
Then we may rewrite~\eqref{eq:bc} in the form
\begin{equation}
\label{eq:bc-mes}
f(0,v) = \int_V f(1,w) \mes(\dd w,v).
\end{equation}

The aim of this section is to prove the following result.
\begin{theorem}
\label{thm:glowne}
The operator \( A \) generates a~strongly continuous semigroup in \( \lo \).
\end{theorem}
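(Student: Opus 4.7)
The plan is to apply Lord Kelvin's method of images, embedding $(A, D(A))$ into a free-transport problem on the larger strip $\tio \coloneqq (-\infty, 1) \times V$ where no boundary condition is needed. On $\tio$, equipped with a suitable weighted measure $\mu_\omega$, the operator $\tia \tif(x,v) \coloneqq -v \partial_x \tif(x,v)$ generates the translation semigroup $\tilde T(t) \tif(x,v) = \tif(x - vt, v)$ on $\lti$; characteristics flow only to the right and exit through $x = 1$, so the translation is well defined. The aim is to identify an isomorphic copy of $\lo$ inside $\lti$ that is invariant under $\tisem$, and to recognise the corresponding restricted semigroup, pulled back to $\lo$, as the semigroup generated by $A$.

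The key object is an extension map $\exts \colon \lo \to \lti$. For $y \in \lo$ I would set $(\exts y)|_\Omega = y$ and define $\tif \coloneqq \exts y$ on $(-\infty, 0) \times V$ inductively by the image equation
\[
\tif(x, v) = \int_V \tif\bigl(1 + xw/v, w\bigr) \mes(\dd w, v),
\qquad x < 0, \; v \in V,
\]
which is precisely what \eqref{eq:bc-mes} becomes after following the characteristics of $-v\partial_x$ from $(x,v)$ backwards in time until they cross $\{x = 0\}$. On the initial slab $\{-v/b < x < 0\}$ the argument $1 + xw/v$ lies in $(0,1)$ for every $w \in V$, so the formula produces $\tif$ directly from the data $y$; on each subsequent slab the recursion refers only to values of $\tif$ on slabs already constructed.

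The technical heart is choosing the weight $\mu_\omega$ so that $\exts$ is bounded -- ideally, an isometry -- with the slab-by-slab contributions summing to a geometric series comparable to $\|y\|_{\lo}$. The computation on each slab is a Fubini calculation, changing variables via $w \mapsto 1 + xw/v$ and invoking the normalisation $\int_V \tm(w,v) \nuv = 1$ from \eqref{eq:rozklad-k}; the weight has to absorb the scaling factor (related to $p+q$) that is picked up each time one iterates past $x = 0$. Once $\exts$ is shown to be a bounded injection with bounded left-inverse $P$ given by restriction to $\Omega$, set $T(t) \coloneqq P \tilde T(t) \exts$.

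To conclude that $\sem$ is a strongly continuous semigroup it is enough to establish the intertwining $\exts T(s) = \tilde T(s) \exts$, equivalently the invariance $\tilde T(s)\exts(\lo) \subseteq \exts(\lo)$. This invariance follows from the fact that the image equation is preserved by the free flow: if $\tif$ satisfies the recurrence, then so does $(x,v) \mapsto \tif(x - vs, v)$, by a direct change of variables inside the defining integral. Strong continuity of $\sem$ transfers from $\tisem$ via the boundedness of $\exts$ and $P$. Finally, the generator is read off from the intertwining: on the core $D(A)$, differentiating at $t = 0^+$ yields $-v\partial_x y$ after restriction to $\Omega$, and taking $x \to 0^-$ in the image equation recovers the boundary condition \eqref{eq:bc}. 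I expect the principal obstacle to be the boundedness of $\exts$: showing that the contributions of the nested slabs form a convergent geometric-type series and pinning down the correct weight $\mu_\omega$ requires careful bookkeeping with the measure $\mes$, and the rest of the argument is essentially a soft transport of properties across the intertwining.
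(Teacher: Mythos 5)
Your proposal tracks the paper's own strategy very closely: define the boundary extension slab by slab using the image equation, house the extensions in a space with an exponential weight absorbing the factor $p+q$ per slab, show the set of extensions is invariant under the free transport semigroup $\tisem$, and then pull the restricted semigroup back to $\lo$. One cosmetic difference is that the paper does not use a weighted measure but rather the norm $\normtio{f} = \sup_{j\ge 0}\e^{-\omega j}\norm{f}_{L^1(\Gamma_j)}$ over the nested regions $\Gamma_j$; a weighted $L^1$ measure such as $\e^{\omega bx/v}\,\dd\mu$ could serve the same purpose, but one would have to check that $\tisem$ is still well-behaved there. This is a matter of bookkeeping, not a substantive divergence.

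However, there is a genuine gap in the last step, and you have located the difficulty in the wrong place. You anticipate that boundedness of the extension operator is the main obstacle; in fact, once the exponential weight is fixed, the boundedness is a fairly routine geometric-series estimate via the inclusion $1 + \Gamma_j \subseteq \Gamma_{j-1}$. The real obstacle is the domain identification, and your sketch handles only half of it. Letting $B$ denote the generator of the constructed semigroup $T(t) = P\tilde T(t)E$, the argument ``take $x\to 0^-$ in the image equation to recover the boundary condition'' shows $D(B)\subseteq D(A)$, i.e.\ $B\subseteq A$. But that alone is fatal to the claim: if $B\subsetneq A$, then $A$ is a proper extension of a generator, hence $\lambda - A$ fails to be injective and $A$ is \emph{not} a generator. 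What you actually need is the converse inclusion $D(A)\subseteq D(B)$, equivalently that for $f\in D(A)$ the extension $\tif$ lies in $D(\tia_{\exts})$. Your ``differentiate at $t=0^+$'' step presupposes exactly this membership without proving it. The paper's Lemma~\ref{lem:dziedziny} establishes it by a resolvent argument: given $f\in D(A)$, set $F=(\lambda - A)f$, solve $(\lambda - \tia_{\exts})g=\tilde F$ in the extension space, and then show $h\coloneqq E^{-1}g - f$ vanishes by observing it solves $\lambda h + v\partial_x h = 0$, hence $h(x,v)=c_{\lambda,v}\e^{-\lambda x/v}$, and the boundary condition forces $c_{\lambda,v}=0$ for large $\lambda$. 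This step is the crux; without it, the proof is incomplete.
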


To prove this theorem we can use the Lord Kelvin method of images.
Indeed, formula~\eqref{eq:gen} indicates that for a~fixed \( v \in V \) the
desired semigroup should resemble a~translation semigroup.
Hence, we would like to define \( \sst = \sem \) in \( \lo \) by
\begin{equation}
\label{eq:sem}
T(t) f(x,v) = \tif(x-tv,v), \qquad t \geq 0,\ (x,v) \in \Omega,\ f \in \lo,
\end{equation}
where \( \tif \) is a~function defined on
\begin{equation}
\label{eq:tio-def}
\tio \coloneqq J \times V
\end{equation}
for \( J \coloneqq (-\infty,1) \).
Since \( T(0)f \) equals \( f \), it follows that \( \tif \) must be an
extension of \( f \).
Moreover, because every semigroup leaves the domain of its generator invariant,
given \( f \in \lo \) we are looking for \( \tif\colon \tio \to \R \) such that
\begin{enumeratee}
  \item\label{item:e1} the restriction of \( \tif \)
to \( \Omega \) equals \( f \), that is, \( \tif_{|\Omega} = f \),
  \item\label{item:e2} if \( f \in D(A) \), then \( T(t) f \) given
by~\eqref{eq:sem} belongs to \( D(A) \) for \( t \geq 0 \).
\end{enumeratee}
Construction of such extension \( \tif \) of \( f \) is the main part of the
method of images.

\begin{lemma}
\label{lem:uniq}
Given \( f \in D(A) \), if there exists \( \tif\colon \tio \to \R \)
satisfying~\ref{item:e1} and~\ref{item:e2}, then it is uniquely determined.
\end{lemma}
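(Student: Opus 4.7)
The plan is to translate condition~\ref{item:e2} into a~functional equation for \( \tif \) on \( \tio \setminus \Omega \), and to solve that equation layer by layer using~\ref{item:e1}. Since \( T(t)f \) must belong to \( D(A) \) for every \( t \geq 0 \), substituting the definition~\eqref{eq:sem} into the boundary condition~\eqref{eq:bc} yields
\begin{equation*}
v \tif(-tv, v) = p \int_V w \tm(w, v) \tif(1-tw, w) \nuw + q v \tif(1-tv, v)
\end{equation*}
for almost every \( v \in V \) and every \( t \geq 0 \). This is the key identity: its left-hand side probes \( \tif \) at \( (-tv, v) \) with nonpositive \( x \)-coordinate, while its right-hand side involves \( \tif \) only at \( (1-tw, w) \) and \( (1-tv, v) \), whose \( x \)-coordinates are strictly larger.

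First I~would peel off the initial slab. For \( t \in [0, 1/b) \) and every \( w \in V \subseteq (a, b) \) one has \( tw < 1 \) and thus \( 1 - tw \in (0, 1] \); by~\ref{item:e1} together with the Sobolev trace noted after the definition of \( \wo \), the right-hand side of the functional equation is then expressed entirely in terms of \( f \). Solving for the left-hand side determines \( \tif(-tv, v) \) uniquely for a.e.\ \( v \in V \) and all \( t \in [0, 1/b) \), which pins down \( \tif \) on the set \( \set{(y, v) \in \tio : -v/b < y \leq 0,\ v \in V} \).

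Next I~would iterate. Assume inductively that \( \tif \) is uniquely determined on the region where \( -nv/b < y \leq 0 \), and take \( t \in [n/b, (n+1)/b) \). For each \( w \in V \), either \( 1-tw > 0 \) and then \( \tif(1-tw, w) = f(1-tw, w) \) by~\ref{item:e1}, or \( 1-tw \leq 0 \), in which case
\[
\tif(1-tw, w) = \tif(-sw, w), \qquad s \coloneqq t - 1/w,
\]
with \( s \in [0, (n+1)/b - 1/w) \subseteq [0, n/b) \) because \( w < b \). By the induction hypothesis the value \( \tif(-sw, w) \) is already fixed; the same dichotomy applies to \( \tif(1-tv, v) \). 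Plugging the known quantities into the functional equation determines \( \tif(-tv, v) \), closing the induction. Since every \( (y, v) \in \tio \setminus \Omega \) lies in some slab for \( n \) large enough, this yields uniqueness of \( \tif \) on all of \( \tio \).

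The main technical obstacle I~anticipate is the measure-theoretic bookkeeping: the boundary condition only holds for almost every \( v \in V \), with an exceptional null set that may depend on \( t \). Reading the functional equation as an almost-everywhere identity on \( [0, \infty) \times V \) and then changing variables via \( (t, v) \mapsto (-tv, v) \) (whose Jacobian is \( v \), nonzero on \( V \) except possibly at \( a = 0 \)) should convert the layer-by-layer pointwise argument into an iteration of a.e.\ equalities between equivalence classes in \( \tio \), which is what is needed for uniqueness as an element of an \( L^1 \)-type space.
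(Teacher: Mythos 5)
Your proof is correct and follows essentially the same route as the paper's: both translate~\ref{item:e2} into the functional equation relating \( \tif \) at \( x \leq 0 \) to \( \tif \) at \( 1+xwv^{-1} \), and both peel off the slabs \( \Omega_i = \{(x,v): -ivb^{-1} < x \leq -(i-1)vb^{-1}\} \) by induction, using that the right-hand side only probes values of \( \tif \) on \( \Omega \) and on previously determined slabs. Your computation \( s = t - 1/w < n/b \) is exactly the paper's containment~\eqref{eq:zawieranie}, recast via the \( t \)-parameterization rather than directly in \( (x,v) \)-coordinates.
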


\begin{proof}
Let \( f \in D(A) \).
Condition \ref{item:e2} implies in particular that \( \tif \) must be chosen in
such a~way that \( T(t) f \) given by~\eqref{eq:sem} satisfies the boundary
condition~\eqref{eq:bc-mes}.
Hence, we must have
\begin{equation*}
\tif(-tv,v) = \int_V \tif(1-tw,w) \mes(\dd w,v)
\end{equation*}
for \( t \geq 0 \) and almost every \( v \in V \).
If we denote \( x = -tv \), this may be rewritten as
\begin{equation}
\label{eq:ext}
\tif(x,v) = \int_V \tif\lrp{1 + xwv^{-1},w} \mes(\dd w,v),
\qquad x \leq 0,\ v \in V.
\end{equation}
For a~positive integer \( i \) we set
\begin{equation}
\label{eq:omegi}
\Omega_i \coloneqq \lrc[\big]{ (x,v) \in \R^2\colon v \in V,\ -ivb^{-1} < x \leq
  -(i-1)vb^{-1} },
\end{equation}
see Figure~\ref{fig:omegi}.
\begin{figure}[h]
\centering
\includegraphics[scale=1.75]{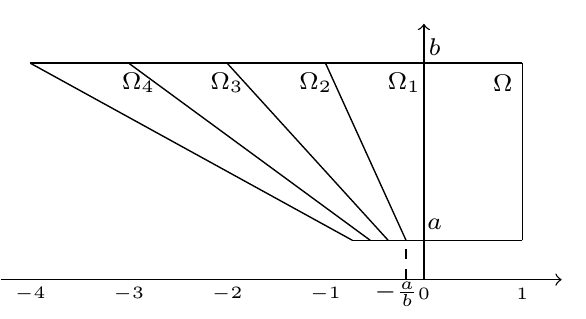}
\caption{\( \tio \) is the union of \( \Omega_i \)'s.}
\label{fig:omegi}
\end{figure}

For \( w \in V \) and \( j \geq 1 \) a~little bit of algebra shows that
\begin{equation}
\label{eq:zawieranie}
(x,v) \in \Omega_j \quad \text{implies} \quad (1+xwv^{-1},w) \in \bigcup_{i=0}^{j-1} \Omega_i,
\end{equation}
where by convention \( \Omega_0 \coloneqq \Omega \).
Therefore, \( \tif \) is determined by induction: Having determined it on
\( \bigcup_{i=0}^j \Omega_i \), \( j \geq 0 \) for \( (x,v) \in \Omega_{j+1} \)
we determine \( \tif(x,v) \) by~\eqref{eq:ext}.
This completes the proof.
\end{proof}

The reasoning presented in the proof of Lemma~\ref{lem:uniq} suggests that given
\( f \in \lo \) we should study its extension \( \tif \) satisfying
\begin{equation}
\label{eq:ext-wzor}
\tif(x,v) = f(x,v)[x > 0] + \int_V \tif(1 + xwv^{-1},w) \mes(\dd w,v) [x \leq 0],
\end{equation}
for almost every \( (x,v) \in \tio \).
This extension is unique up to an equivalence class.
Here and subsequently we use the Iverson bracket notation~\cite[p.~24]{knuth},
that is, if \( P \) is a~statement that can be true or false, then
\[
[P] =
\begin{cases}
1, & \text{\( P \) is true},\\
0, & \text{otherwise}.
\end{cases}
\]

\begin{definition}
\label{def:ext}
We call the extension \( \tif \) satisfying~\eqref{eq:ext-wzor} the
\emph{boundary extension} of \( f \in \lo \).
\end{definition}

We stress that we do not assume that \( f \) belongs to \( D(A) \) in order to
define \( \tif \).
However, what is crucial, boundary extensions of functions from the domain of
\( A \) posses an important property which we describe in
Lemma~\ref{lem:dziedziny}.

Now we need to find a~suitable Banach space in which all extensions live.
For \( \omega \in \R \) we define the function \( \e_{\omega}\colon \R \to \R \)
by the formula \( \e_{\omega}(x) \coloneqq \e^{\omega x} \), \( x \in \R \).
Also, we introduce the product measure \( \lebm \times \nu \) on \( \tio \)
(recall~\eqref{eq:tio-def} for the definition of \( \tio \)).
We denote this product measure by \( \mu \), as on \( \Omega \).
Given \( \omega \geq 0 \) we let \( \ltio \) to be the space of equivalence
classes of \( \mu \)-measurable functions \( f \) on \( \tio \), such that
\[
\normtio{ f } \coloneqq \sup_{j \geq 0} \e^{-\omega j} \norm{ f
}_{L^1(\Gamma_j)} < +\infty,
\]
where
\begin{equation}
\label{eq:gammy}
\Gamma_j \coloneqq \bigcup_{i = 0}^j \Omega_i, \qquad j \geq 0
\end{equation}
with \( \Omega_0 \coloneqq \Omega \) and \( \Omega_i \), \( i \geq 1 \) defined
as in~\eqref{eq:omegi}.
Here, we naturally set \( \norm{ f }_{L^1(\Gamma_j)} \coloneqq \int_{\Gamma_j}
\abs{ f } \dmu \).
It is easy to check that \( \normtio{ \, \cdot \, } \) is a~norm on \( \ltio \),
and that \( \ltio \) equipped with this norm is a~Banach space.

\begin{lemma}
\label{lem:int-mes}
Let \( g \) be a~\( \nu \)-integrable function defined on \( V \).
Then
\begin{equation*}
\int_V \int_V vw^{-1} g(w) \mes(\dd w,v) \nuv = (p+q) \int_V g(w) \nuw.
\end{equation*}
\end{lemma}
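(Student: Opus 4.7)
The plan is to unfold the definition of the measure $\mes(\cdot,v)$ from~\eqref{eq:mes} and evaluate the inner integral explicitly before applying Fubini's theorem and the normalization~\eqref{eq:rozklad-k} of the kernel $\tm$.

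First I would substitute $\mes(\dd w,v) = p wv^{-1} \tm(w,v) \nu(\dd w) + q \delta_v(\dd w)$ into the inner integral over $w$. The factor $vw^{-1}$ in the integrand cancels neatly against $wv^{-1}$ in the first summand, leaving $p \int_V g(w)\tm(w,v)\nu(\dd w)$, while the Dirac mass in the second summand collapses $vw^{-1}g(w)$ at $w=v$ to just $qg(v)$. So the inner integral simplifies to
\begin{equation*}
\int_V vw^{-1} g(w) \mes(\dd w,v) = p \int_V g(w) \tm(w,v) \nu(\dd w) + q g(v).
\end{equation*}

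Next I would integrate this identity against $\nuv$. For the first term I would invoke Fubini (justified since $g$ is $\nu$-integrable and $\tm$ is nonnegative) to swap the order of integration, obtaining $p \int_V g(w) \bigl(\int_V \tm(w,v)\nuv\bigr) \nu(\dd w)$; the inner integral equals $1$ by~\eqref{eq:rozklad-k}, so this term contributes $p \int_V g(w)\nuw$. The second term clearly gives $q \int_V g(v)\nuv$. Summing yields $(p+q)\int_V g(w)\nuw$, as claimed.

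There is no real obstacle here; the only point requiring a moment of care is the cancellation of the $vw^{-1}$ factor against the atomic part $q\delta_v(\dd w)$, which must be interpreted in the straightforward sense that $\int_V h(w)\delta_v(\dd w) = h(v)$ applied to $h(w) = vw^{-1} g(w)$.
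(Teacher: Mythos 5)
Your argument is correct and is precisely the unpacking of the paper's one-line proof, which simply cites~\eqref{eq:mes}, Fubini, and~\eqref{eq:rozklad-k}. Nothing to add.
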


\begin{proof}
The conclusion follows by~\eqref{eq:mes}, the Fubini theorem
and~\eqref{eq:rozklad-k}.
\end{proof}

\begin{lemma}
\label{lem:rozszerzenia-sa-porzadne}
Assume that
\begin{equation}
\label{eq:oszacowanie-omegi}
\omega > \max \lrp[\big]{ \log(p+q), 0 }.
\end{equation}
Then for \( f \in \lo \) its boundary extension \( \tif \) belongs to
\( \ltio \) and there exists \( M_{\omega} > 0 \) such that
\begin{equation}
\label{eq:norma-rozszerzenia}
\normtio{ \tif } \leq M_{\omega} \norm{ f }_{\lo}, \qquad f \in \lo.
\end{equation}
\end{lemma}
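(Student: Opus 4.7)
The plan is to derive a recursive estimate for $a_j \coloneqq \int_{\Omega_j} |\tif| \dmu$ and then convert it into a bound on $\normtio{\tif}$. By Definition~\ref{def:ext}, $\tif = f$ on $\Omega_0 = \Omega$, so $a_0 = \norm{f}_{\lo}$, while $\norm{\tif}_{L^1(\Gamma_j)} = \sum_{i=0}^{j} a_i$ by~\eqref{eq:gammy}; hence~\eqref{eq:norma-rozszerzenia} amounts to a uniform-in-$j$ bound on $\e^{-\omega j} \sum_{i=0}^{j} a_i$.

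Fix $j \geq 0$. For $(x, v) \in \Omega_{j+1}$ one has $x \leq -jv/b \leq 0$, so~\eqref{eq:ext-wzor} reduces to $\tif(x, v) = \int_V \tif(1 + xwv^{-1}, w) \mes(\dd w, v)$. Taking absolute values, integrating over $\Omega_{j+1}$, interchanging the order of integration by Fubini, and changing variables $y = 1 + xwv^{-1}$ (so $\dd x = (v/w)\,\dd y$), I would arrive at
\[
a_{j+1} \leq \int_V \int_V \frac{v}{w}\, g(w)\, \mes(\dd w, v)\, \nuv, \qquad g(w) \coloneqq \int_{1 - (j+1)w/b}^{1 - jw/b} |\tif(y, w)|\, \dd y.
\]
Lemma~\ref{lem:int-mes} then collapses the outer double integral and extracts the factor $p+q$, giving $a_{j+1} \leq (p+q) \int_{R_{j+1}} |\tif| \dmu$, where
\[
R_{j+1} \coloneqq \lrc{(y, w) \in \tio : w \in V,\ 1 - (j+1)w/b < y \leq 1 - jw/b}.
\]
A short check using $w \leq b$ (the inequalities $1 - jw/b \leq 1$ and $1 - (j+1)w/b \geq -jw/b$) shows $R_{j+1} \subseteq \Gamma_j$, whence
\[
a_{j+1} \leq (p+q) \sum_{i=0}^{j} a_i.
\]

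An induction on $j$ would then convert this Gronwall-type recursion into the growth bound $\sum_{i=0}^{j} a_i \leq M_\omega \e^{\omega j} \norm{f}_{\lo}$ valid for every $\omega$ satisfying~\eqref{eq:oszacowanie-omegi}, which is exactly~\eqref{eq:norma-rozszerzenia}. The delicate point is this last step: a naive telescoping of the recursion only yields $\sum_{i=0}^{j} a_i \leq (1 + p + q)^j \norm{f}_{\lo}$ and the strictly stronger condition $\omega > \log(1 + p + q)$; recovering the sharp threshold $\max(\log(p+q), 0)$ of~\eqref{eq:oszacowanie-omegi} requires exploiting the fact that $R_{j+1}$ is a thin slab inside $\Gamma_j$, of ``width'' $w/b$ for each $w$, so that in the induction $\int_{R_{j+1}} |\tif|$ must be compared with a weighted combination of the $a_i$'s -- tracking how $R_{j+1}$ meets each $\Omega_i$ -- rather than with their plain sum.
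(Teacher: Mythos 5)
Your computations up to the recursion are correct, and you have honestly flagged the real difficulty: the bound $a_{j+1} \leq (p+q)\sum_{i=0}^{j} a_i$ is too weak on its own, since telescoping it yields $\sum_{i=0}^{j} a_i \leq (1+p+q)^j \norm{f}_{\lo}$, which forces $\omega > \log(1+p+q)$ rather than the claimed threshold. So there is a genuine gap, and you have correctly located it, but the fix you sketch (tracking how $R_{j+1}$ intersects each $\Omega_i$ and forming a weighted combination of the $a_i$'s) is not what is needed; it is both unnecessarily delicate and not clearly convergent to the sharp bound.

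The missing idea is an aggregation trick that you already have all the pieces for. Do not treat $a_{j+1}$ in isolation. Your change-of-variables computation gives, for every $i\geq 1$,
\begin{equation*}
a_i = \int_{\Omega_i}\abs{\tif}\dmu \;\leq\; (p+q)\int_{1+\Omega_i}\abs{\tif}\dmu ,
\end{equation*}
where $1+\Omega_i$ is your $R_i$. Now sum this over $i=1,\dots,j$. The sets $1+\Omega_i$ are pairwise disjoint (being translates of the disjoint $\Omega_i$), and their union is contained in $\Gamma_{j-1}$ (this is your containment $R_{i}\subseteq\Gamma_{i-1}\subseteq\Gamma_{j-1}$, or equivalently the inclusion $1+\Gamma_j\subset\Gamma_{j-1}$). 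Hence
\begin{equation*}
\sum_{i=1}^{j} a_i \;\leq\; (p+q)\int_{\bigcup_{i=1}^j (1+\Omega_i)}\abs{\tif}\dmu \;\leq\; (p+q)\int_{\Gamma_{j-1}}\abs{\tif}\dmu ,
\end{equation*}
which in your notation reads $b_j - a_0 \leq (p+q)\,b_{j-1}$ for $b_j \coloneqq \sum_{i=0}^j a_i$. Iterating the linear recursion $b_j \leq a_0 + (p+q)\,b_{j-1}$ with $b_0 = a_0 = \norm{f}_{\lo}$ gives $b_j \leq \norm{f}_{\lo}\sum_{i=0}^{j}(p+q)^i$, and then a short case analysis on $p+q<1$, $p+q=1$, $p+q>1$ produces a constant $M_\omega$ with $\e^{-\omega j} b_j \leq M_\omega\norm{f}_{\lo}$ precisely under $\omega > \max(\log(p+q),0)$. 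This is exactly the paper's argument; the decisive step you were missing is to sum the per-ring estimates and exploit disjointness of the shifted rings, rather than to analyze the geometry of a single shifted ring against the $\Omega_i$'s.
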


\begin{proof}
Let \( \omega \geq 0 \), \( f \in \lo \), and \( \tif \) be its boundary
extension.
For \( i \geq 1 \), \( v \in V \) we denote
\[
\Omega_{i,v} \coloneqq \lrc[\big]{ x \in \R\colon -ivb^{-1} < x \leq
  -(i-1)vb^{-1} }.
\]
Since \( x \in \Omega_{i,v} \) implies \( x \leq 0 \), it follows
by~\eqref{eq:ext-wzor} that
\begin{equation*}
\begin{split}
\int_{\Omega_i} \abs{ \tif } \dd \mu &= \int_V
\int_{\Omega_{i,v}} \abs{ \tif(x,v) } \dd x \nuv\\
&\leq \int_V \int_V \int_{\Omega_{i,v}} \abs{ \tif(1+xw v^{-1},w) } \dd x
\mes(\dd w,v) \nuv.
\end{split}
\end{equation*}
Changing variables \( x \mapsto 1 + xwv^{-1} \) leads to
\begin{equation*}
\begin{split}
\int_{\Omega_i} \abs{ \tif } \dd \mu &\leq \int_V \int_V
\int_{1+\Omega_{i,w}} vw^{-1} \abs{ \tif(x,w) } \dd x \mes(\dd w,v) \nuv\\
&= (p+q) \int_V \int_{1+\Omega_{i,v}} \abs{ \tif(x,w) } \dd x \nuw,
\end{split}
\end{equation*}
where \( 1+\Omega_{i,v} \) is the algebraic sum of \( \lrc{ 1 } \) and
\( \Omega_{i,v} \), with the last equality resulting from the Fubini theorem by
Lemma~\ref{lem:int-mes}.
Thus
\begin{equation}
\label{eq:oszacowanie-na-omega}
\begin{split}
\int_{\Omega_i} \abs{ \tif } \dmu \leq (p+q) \int_{1+\Omega_i} \abs{ \tif }
\dmu, \qquad i \geq 1,
\end{split}
\end{equation}
where \( 1 + \Omega_i \) is the algebraic sum of \( \set{1} \times V \) and
\( \Omega_i \).
Furthermore, we have
\begin{equation*}
1 + \Gamma_j \subset \Gamma_{j-1}, \qquad j \geq 1;
\end{equation*}
see Figure~\ref{fig:omegi-przesuniete} or use~\eqref{eq:zawieranie} with
  \( w \coloneqq v \).
\begin{figure}
\centering
\includegraphics[scale=1.8]{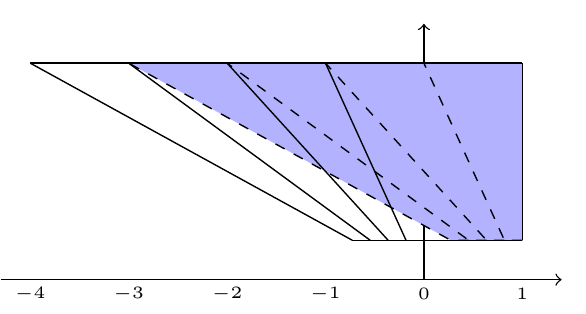}
\caption{The set \( \bigcup_{i=1}^4 (1+\Omega_i) \) is colored blue.}
\label{fig:omegi-przesuniete}
\end{figure} 
Combining this with~\eqref{eq:oszacowanie-na-omega}, for \( j \geq 1 \) we
obtain
\begin{equation*}
\int_{\Gamma_j} \abs{ \tif } \dmu = \normo{ f } +
\int_{\Gamma_j \setminus \Omega} \abs{ \tif } \dmu \leq \normo{ f } +
(p+q) \int_{\Gamma_{j-1}} \abs{ \tif } \dmu.
\end{equation*}
Hence, induction shows that
\begin{equation}
\label{eq:oszacowanie-normy-ext}
\int_{\Gamma_j} \abs{ \tif } \dmu \leq \normo{ f } \sum_{i=0}^j (p+q)^i, \qquad
j \geq 1.
%= \frac{(p+q)^{j+1}-1}{p+q-1} \normo{ f }.
\end{equation}
This implies that if \( p+q < 1 \), then for \( \omega \geq 0 \) we have
\begin{equation}
\label{eq:norma-ext-ogr}
\normtio{ \tif } \leq \frac{1}{1-p-q} \normo{ f }.
\end{equation}
On the other hand, if \( p+q > 1 \) and \( \omega \geq \log(p + q) \), then we
have \( \sup_{j \geq 0} \e^{-\omega j} (p+q)^j \leq 1 \) and
by~\eqref{eq:oszacowanie-normy-ext} it follows that
\[
\normtio{ \tif } \leq \sup_{j \geq 0} \e^{-\omega j} \frac{(p+q)^{j+1} -
  1}{p+q-1} \normo{ f } \leq \frac{p+q}{p+q-1} \normo{ f }.
\]
Finally, if \( p+q = 1 \) and \( \omega > 0 \), then again
by~\eqref{eq:oszacowanie-normy-ext}
\[
\normtio{ \tif } \leq \sup_{j \geq 0} (j+1)\e^{-\omega j} \normo{ f } \leq
\frac{1}{\omega} \e^{\omega-1} \normo{ f },
\]
which proves~\eqref{eq:norma-rozszerzenia}.
\end{proof}

We denote by \( \exts \) the set of all boundary extensions, that is,
\[
\exts \coloneqq \lrc{\tif\colon f \in \lo}.
\]
It is clear from~\eqref{eq:ext-wzor} that for \( f, g \in \lo \) and
\( \alpha \in \R \) we have \( \widetilde{\alpha f} = \alpha \tif \) and
\( \widetilde{f + g} = \tif + \tilde g \).
This implies that \( \exts \) is a~linear space and in view of
Lemma~\ref{lem:rozszerzenia-sa-porzadne} we have \( \exts \subseteq \ltio \)
provided that ~\eqref{eq:oszacowanie-omegi} holds.
Here and subsequently we fix such \( \omega \), that is,
\[
\omega > \max \lrp[\big]{ \log(p+q), 0 }.
\]
We endow \( \exts \) with \( \normtio{\, \cdot \,} \) norm, and define the
extension operator
\[
E\colon \lo \to \exts
\]
by
\[
Ef \coloneqq \tif.
\]

In order to set up notation, given Banach spaces \( X, Y \) we let
\( \lino{X,Y} \) to be the space of bounded linear operators \( X \to Y \) with
standard operator norm \( \norm{ \, \cdot \, }_{\lino{X,Y}} \).
If \( Y = X \) we write \( \lino{X,X} = \lino X \).

\begin{proposition}
\label{prop:operator-rozszerzania}
The operator \( E \) is an isomorphism between the spaces \( \lo \) and
\( \exts \).
Moreover
\begin{equation}
\label{eq:norma-operatora-ext}
\norm{ E }_{\lino{\lo, \ltio}} \leq M_{\omega},
\end{equation}
and
\begin{equation}
\label{eq:norma-operatora-ext-1}
\norm{ E^{-1} }_{\lino{\ltio, \lo}} = 1.
\end{equation}
where \( M_\omega \) is the constant from~\eqref{eq:norma-rozszerzenia}.
\end{proposition}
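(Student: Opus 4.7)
The plan is to verify linearity, bijectivity, and the two norm bounds separately. Linearity of $E$ is immediate from the identities $\widetilde{\alpha f + g} = \alpha \tif + \widetilde g$ recorded just before the proposition; surjectivity onto $\exts$ is by the very definition of $\exts$; and injectivity follows because the first branch of~\eqref{eq:ext-wzor} gives $\tif|_\Omega = f$, so $\tif \equiv 0$ on $\tio$ forces $f \equiv 0$ on $\Omega$. The estimate~\eqref{eq:norma-operatora-ext}, i.e., $\norm{E}_{\lino{\lo,\ltio}} \le M_\omega$, is nothing but Lemma~\ref{lem:rozszerzenia-sa-porzadne} rephrased.

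For~\eqref{eq:norma-operatora-ext-1}, I would first note that the inverse acts by restriction, $E^{-1} g = g|_\Omega$, and since the term with $j=0$ in the supremum defining $\normtio{g}$ carries weight $\e^{0}=1$ and equals $\norm{g|_\Omega}_{\lo}$, one obtains the easy direction
\[
\norm{E^{-1} g}_{\lo} \;=\; \norm{g}_{L^1(\Gamma_0)} \;\le\; \sup_{j \ge 0} \e^{-\omega j}\,\norm{g}_{L^1(\Gamma_j)} \;=\; \normtio{g},
\]
i.e., $\norm{E^{-1}} \le 1$. For the matching lower bound my plan is to test the operator on the sequence $f_n \coloneqq \mathbf{1}_{[0,1/n]\times V}$ and to show that $\normtio{\tif_n}/\norm{f_n}_{\lo} \to 1$ as $n \to \infty$. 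Geometrically, the support of $f_n$ has vanishingly small overlap with $1+\Omega_1 = \{(y,v)\in\Omega : 1 - v/b < y \le 1\}$, so the key inequality~\eqref{eq:oszacowanie-na-omega} in the proof of Lemma~\ref{lem:rozszerzenia-sa-porzadne} sharpens to
\[
\norm{\tif_n}_{L^1(\Omega_1)} \;\le\; (p+q)\,\norm{f_n}_{L^1(1+\Omega_1)} \;=\; o\bigl(\norm{f_n}_{\lo}\bigr),
\]
and iterating the same shifting trick yields $\norm{\tif_n}_{L^1(\Omega_i)} = o(\norm{f_n}_{\lo})$ for each fixed $i \ge 1$.

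The hard part will be passing from this pointwise-in-$i$ smallness to control of the entire supremum $\sup_{j} \e^{-\omega j}\,\norm{\tif_n}_{L^1(\Gamma_j)}$. I would handle this by splitting at a large but fixed $J$: on $j \le J$ the previous step gives a $1 + o_n(1)$ estimate uniformly in this finite range, while on $j > J$ the uniform geometric-type bound~\eqref{eq:oszacowanie-normy-ext} combined with the assumption $\omega > \max(\log(p+q),0)$ makes $\e^{-\omega j}\,\norm{\tif_n}_{L^1(\Gamma_j)}/\norm{f_n}_{\lo}$ arbitrarily small independently of $n$, by first choosing $J$ large. Combining the two halves yields $\normtio{\tif_n}/\norm{f_n}_{\lo} \to 1$, hence $\norm{E^{-1}} \ge 1$ and finally the equality~\eqref{eq:norma-operatora-ext-1}.
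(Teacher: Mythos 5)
Your proof of linearity, bijectivity and the bound $\norm{E}_{\lino{\lo,\ltio}} \le M_\omega$ matches the paper, as does the easy inequality $\norm{E^{-1}} \le 1$ obtained by looking at the $j=0$ term of the supremum. Where you and the paper part ways is the matching lower bound $\norm{E^{-1}} \ge 1$. The statement concerns $\norm{E^{-1}}_{\lino{\ltio,\lo}}$, that is, the restriction operator acting on \emph{all} of $\ltio$, not merely on its subspace $\exts$. With that reading the lower bound is immediate: take any nonzero $g \in \ltio$ that vanishes off $\Omega$; then $\norm{g}_{L^1(\Gamma_j)} = \norm{g}_{L^1(\Omega)}$ for every $j$, so $\normtio{g} = \sup_j \e^{-\omega j}\norm{g}_{L^1(\Omega)} = \norm{g}_{\lo} = \norm{E^{-1}g}_{\lo}$ and the norm is attained. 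This is why the paper records only the $\le$ computation and then simply asserts the equality.

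Your concentration argument --- test on $\tif_n$ for $f_n=\ind_{[0,1/n]\times V}$, show $\norm{\tif_n}_{L^1(\Omega_i)} = o(\normo{f_n})$ for each fixed $i$ via the shifting inequality~\eqref{eq:oszacowanie-na-omega}, then split the supremum at a large $J$ and kill the tail uniformly in $n$ with~\eqref{eq:oszacowanie-normy-ext} and the choice $\omega > \max(\log(p+q),0)$ --- is sound and does yield $\normtio{\tif_n}/\normo{f_n} \to 1$. But note it proves the strictly stronger fact that $\inf_{f\neq 0}\normtio{Ef}/\normo{f} = 1$, i.e., that the inverse has norm $1$ already when restricted to $\exts$. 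For \emph{that} claim your machinery is genuinely needed, since no nonzero element of $\exts$ vanishes off $\Omega$ and so the one-line witness above is unavailable; for the proposition as stated it is overkill. Two small caveats if you carry it out: your computation tacitly assumes $V=(a,b)$ with Lebesgue measure (the proposition is stated for a general measure $\nu$ on $V\subseteq(a,b)$; the overlap of $\mathrm{supp}\,f_n$ with $1+\Omega_1$ still vanishes because $\nu(\{v>b(1-1/n)\})\to 0$, but you should say so), and you need $\nu(V)<\infty$ for $f_n$ to lie in $\lo$ (otherwise replace $V$ by a finite-measure subset).
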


\begin{proof}
Inequality~\eqref{eq:norma-operatora-ext} follows directly from
Lemma~\ref{lem:rozszerzenia-sa-porzadne}.
On the other hand, since \( Ef = 0 \) implies \( 0 = \tif_{|\Omega} = f \), the
operator \( E \) is one-to-one.
Moreover, the inverse \( E^{-1} \) is the restriction operator, that is,
\( E^{-1} f = f_{|\Omega} \).
Hence
\[
\norm{ E^{-1} f }_{\lo} = \norm{ f_{|\Omega} }_{\lo} = \norm{ f
}_{L^1(\Gamma_0)} \leq \normt{ f }_{\ltio}, \qquad f \in \ltio.
\]
This shows~\eqref{eq:norma-operatora-ext-1} and completes the proof.
\end{proof}

Let now \( \tit = \tisem \) be the family of operators in \( \ltio \) given by
\begin{equation}
\label{eq:tisem}
\tit(t)f(x,v) \coloneqq f(x-tv,v), \qquad t \geq 0,\ (x,v) \in \tio,\ f \in
\ltio.
\end{equation}
A standard reasoning shows that \( \tit \) is a~strongly continuous semigroup
and its generator \( \tia \) is given by
\[
\tia f(x,v) \coloneqq -v \partial_x f(x,v), \qquad (x,v) \in \tio,\ f \in \ltio,
\]
with domain
\[
D(\tia) \coloneqq \wtio,
\]
where \( \wtio \) is the space of (equivalence classes of) functions
\( f \in \ltio \) satisfying:
\begin{enumerate}
  \item for almost every \( v \in V \) the function \( J \ni x \mapsto f(x,v) \)
is weakly differentiable,
  \item the function \( \Omega \ni (x,v) \mapsto v \partial_x f(x,v) \) belongs
to \( \ltio \).
\end{enumerate}
Analogously as in the case of \( \wo \), if \( f \in \wtio \), then for almost
every \( v \in V \) the function \( J \ni x \mapsto f(x,v) \) may be uniquely
extended to a~continuous function on \( (-\infty,1] \).

\begin{lemma}
\label{lem:niezmienniczosc}
The space \( \exts \) is invariant for the semigroup \( \tit \).
\end{lemma}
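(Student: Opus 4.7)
The plan is to show that for any $f \in \lo$ and $t \geq 0$ the function $h \coloneqq \tit(t)\tif$ is itself the boundary extension of some element of $\lo$, namely of its own restriction $g \coloneqq h_{|\Omega}$. This gives $\tit(t)\tif = \tilde g \in \exts$ immediately.

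First I would record that $h \in \ltio$: indeed $\tit$ is a strongly continuous semigroup on $\ltio$ and $\tif \in \ltio$ by Lemma~\ref{lem:rozszerzenia-sa-porzadne}, so $g = h_{|\Omega} \in \lo$ since $\normo{g} \leq \normtio{h}$ (cf.\ the proof of Proposition~\ref{prop:operator-rozszerzania}). It then remains to verify the defining identity~\eqref{eq:ext-wzor} for $h$ with datum $g$. For $x > 0$ this holds by the definition of $g$. For almost every $(x,v) \in \tio$ with $x \leq 0$, we have $x - tv \leq 0$ as well, so applying~\eqref{eq:ext-wzor} to $\tif$ at the point $(x-tv, v)$ yields
\[
h(x,v) = \tif(x-tv, v) = \int_V \tif\lrp[\big]{1 + (x-tv)wv^{-1}, w} \mes(\dd w, v).
\]
The key algebraic identity
\[
1 + (x-tv)wv^{-1} = (1 + xwv^{-1}) - tw
\]
then shows that the integrand equals $\tif((1+xwv^{-1}) - tw, w) = h(1+xwv^{-1}, w)$, so the right-hand side is precisely $\int_V h(1+xwv^{-1}, w)\mes(\dd w,v)$, which is the required form of~\eqref{eq:ext-wzor} for $h$.

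The proof is essentially a one-line computation, and there is no serious obstacle. The only point worth emphasizing is the asymmetry in the identity $1 + (x-tv)wv^{-1} = 1 + xwv^{-1} - tw$: after reproduction, a daughter cell travels at its own speed $w$ rather than at the mother's speed $v$, which is exactly the interplay between translation in $\tit$ and the boundary rule encoded in $\mes(\dd w,v)$ that makes $\exts$ invariant. A minor measurability remark is that the map $(x,v) \mapsto (x-tv,v)$ preserves $\mu = \lebm \times \nu$ on the half-plane $x \leq 0$, so the almost-everywhere validity of~\eqref{eq:ext-wzor} for $\tif$ transfers to an almost-everywhere statement for $h$.
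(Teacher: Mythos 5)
Your argument is correct. The core computation is the same as the paper's: the key algebraic identity $1 + (x-tv)wv^{-1} = (1+xwv^{-1}) - tw$ combined with the defining relation~\eqref{eq:ext-wzor}. The organizational difference is that the paper proves $\tilde g = \tit(t)\tif$ by induction on the sets $\Gamma_j$, explicitly invoking the inclusion~\eqref{eq:zawieranie} at each step to guarantee that the inductive hypothesis applies to the arguments $(1+xwv^{-1},w)$. You instead verify the functional equation~\eqref{eq:ext-wzor} for $h = \tit(t)\tif$ globally in one stroke, using only the trivial observation that $x \leq 0$ implies $x - tv \leq 0$, and then appeal to the uniqueness of boundary extensions (stated in the paper right after~\eqref{eq:ext-wzor}) to identify $h$ with $\tilde g$. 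This is slightly leaner: it removes the explicit induction and the geometric bookkeeping with the $\Gamma_j$'s, at the cost of leaning on the uniqueness statement, which the paper establishes by the very same induction — so the two proofs are roughly the same depth, yours being a modest streamlining that factors the induction into the previously established uniqueness claim. Your measurability remark at the end is a nice touch, confirming that the almost-everywhere validity of~\eqref{eq:ext-wzor} transfers correctly under the measure-preserving shear $(x,v) \mapsto (x-tv,v)$.
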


\begin{proof}
Fix \( t \geq 0 \).
Let \( f \in \lo \), and let \( \tif \in \exts \) be its boundary extension.
We prove that \( \tit(t)\tif \) is the boundary extension of \( g \in \lo \)
defined by
\[
g(x,v) = \tif(x-tv,v), \qquad (x,v) \in \Omega.
\]
We proceed by induction, showing \( \tilde g = \tit(t) \tif \) on each
\( \Gamma_j \), \( j \geq 0 \) (see~\eqref{eq:gammy}).
Let \( (x,v) \in \tio \) and recall that by~\eqref{eq:tisem},
\( \tit(t)\tif(x,v) = \tif(x-tv,v) \).
If \( (x,v) \in \Omega = \Gamma_0 \), then by~\eqref{eq:ext-wzor} we have
\[
\tilde g(x,v) = g(x,v) = \tit(t) \tif(x,v).
\]
Fix \( j \geq 0 \) and assume that \( \tilde g(x,v) = \tit(t) \tif(x,v) \) for
\( (x,v) \in \Gamma_j \).
If \( (x,v) \in \Omega_{j+1} = \Gamma_{j+1} \setminus \Gamma_j \), then for each
\( w \in V \) it follows that \( (1 + xwv^{-1},w) \in \Gamma_j \)
by~\eqref{eq:zawieranie}.
Therefore, by~\eqref{eq:ext-wzor},
\begin{equation*}
\begin{split}
\tilde g(x,v) &= \int_V \tilde g(1 + xwv^{-1},w) \mes(\dd w,v)\\
&= \int_V \tit(t)\tif(1 + xwv^{-1},w) \mes(\dd w,v)\\
&= \int_V \tif(1 + (x-tv)wv^{-1},w) \mes(\dd w,v)\\
&= \tif(x-tv,v)
\end{split}
\end{equation*}
for almost every \( (x,v) \in \Gamma_{j+1} \setminus \Gamma_j \).
This shows
\[
\tilde g(x,v) = \tit(t)\tif(x,v), \qquad (x,v) \in \Gamma_{j+1},
\]
which completes the proof.
\end{proof}

By Lemma~\ref{lem:niezmienniczosc} the part \( \tia_{\exts} \) of \( \tia \) in
\( \exts \), that is, the operator defined as
\[
\tia_{\exts} f \coloneqq \tia f, \qquad D(\tia_{\exts}) \coloneqq \lrc{ f \in
  D(\tia) \cap \exts\colon \tia f \in \exts },
\]
generates the strongly continuous semigroup \( \tisemy \) in
\( \exts \) given by
\begin{equation*}
\tit_{\exts}(t) \tif(x,v) \coloneqq \tif(x-tv,v), \qquad t \geq 0,\ (x,v) \in
\Omega,\ f \in \exts,
\end{equation*}
see for example~\cite[Corollary II.2.3]{engelnagel}.

\begin{lemma}
\label{lem:dziedziny}
Let \( f \in \lo \).
We have \( f \in D(A) \) if and only if \( \tif \in D(\tia_{\exts}) \).
\end{lemma}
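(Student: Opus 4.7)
The plan is to show that the boundary condition~\eqref{eq:bc-mes} corresponds exactly to the continuity of the boundary extension \( \tif \) at the interface \( x = 0 \), and that weak differentiability of \( \tif \) then propagates to all of \( \tio \) through the recursion~\eqref{eq:ext-wzor}.

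First I would tackle the direction \( f \in D(A) \Longrightarrow \tif \in D(\tia_{\exts}) \). The strategy is to prove by induction on \( i \geq 0 \) that \( \tif \) is weakly differentiable in \( x \) on each \( \Omega_i \) and that
\[
v \partial_x \tif(x,v) = \int_V w \partial_x \tif(1 + xwv^{-1}, w) \mes(\dd w, v), \qquad (x,v) \in \bigcup_{i \geq 1} \Omega_i,
\]
by differentiating~\eqref{eq:ext-wzor} under the integral sign. This identity has a conceptual consequence: \( -v \partial_x \tif \) satisfies the same boundary-extension recursion~\eqref{eq:ext-wzor}, but starting from \( Af \) on \( \Omega \). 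In other words \( \tia \tif = E(Af) \), so \( \tia \tif \in \exts \) and, by Lemma~\ref{lem:rozszerzenia-sa-porzadne} applied to \( Af \), also \( \tia \tif \in \ltio \).

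The crucial step is continuity of \( \tif(\cdot, v) \) across \( x = 0 \) for almost every \( v \in V \); without it, the piecewise weak differentiability produced by the induction would not yield membership in \( D(\tia) \). By~\eqref{eq:ext-wzor} the left-sided limit at \( 0 \) equals \( \int_V f(1, w) \mes(\dd w, v) \), while the right-sided limit is \( f(0, v) \); these agree precisely because \( f \) satisfies~\eqref{eq:bc-mes}. Continuity across each subsequent interface \( x = -i v b^{-1} \) is automatic, since the recursion~\eqref{eq:ext-wzor} is one and the same formula on the whole region \( x \leq 0 \) and its right-hand side is continuous in \( x \) as soon as \( \tif \) is known to be continuous on the already-processed sets \( \bigcup_{j<i} \Omega_j \). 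Combining global weak differentiability of \( \tif \) with \( \tia \tif \in \exts \) yields \( \tif \in D(\tia_{\exts}) \).

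For the reverse implication I would start from \( \tif \in D(\tia_{\exts}) \subseteq D(\tia) = \wtio \); this gives that \( \tif(\cdot, v) \) admits a continuous representative on \( (-\infty, 1] \) for almost every \( v \in V \). The restriction \( f = \tif_{|\Omega} \) then lies in \( \wo \) automatically, and continuity at \( x = 0 \) combined with~\eqref{eq:ext-wzor} yields
\[
f(0, v) = \tif(0, v) = \int_V \tif(1, w) \mes(\dd w, v) = \int_V f(1, w) \mes(\dd w, v),
\]
which is precisely~\eqref{eq:bc-mes}. Hence \( f \in D(A) \). I expect the main technical obstacle to be the rigorous justification of differentiation under the integral sign in~\eqref{eq:ext-wzor} and the verification that the resulting identity produces an element of \( \ltio \); this amounts to reapplying the change-of-variables argument from the proof of~\eqref{eq:oszacowanie-na-omega} to \( Af \) in place of \( f \), with the factor \( p + q \) supplied by Lemma~\ref{lem:int-mes} once again controlling the exponential weight \( \e^{-\omega j} \).
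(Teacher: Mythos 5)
Your argument for the direction \( \tif \in D(\tia_{\exts}) \Rightarrow f \in D(A) \) matches the paper's: restrict, use the Sobolev embedding to read off boundary traces, and evaluate the recursion~\eqref{eq:ext-wzor} at \( x = 0 \) to recover~\eqref{eq:bc-mes}. For the converse, however, you take a genuinely different route. You argue directly that \( -v\partial_x\tif \) satisfies the same recursion~\eqref{eq:ext-wzor} with datum \( Af \) on \( \Omega \), hence equals \( \widetilde{Af} \in \exts \), and you glue the piecewise weak derivatives using the continuity of \( \tif(\cdot,v) \) across \( x=0 \) furnished by the boundary condition (continuity at the deeper interfaces then propagates through the recursion). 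The paper instead argues \emph{indirectly}: by the Hille--Yosida theorem it solves \( \lambda g - \tia_{\exts}g = \widetilde{\lambda f + v\partial_x f} \) in \( \exts \), sets \( h \coloneqq E^{-1}g - f \in D(A) \) so that \( \lambda h + v\partial_x h = 0 \), hence \( h(x,v) = c_{\lambda,v}\e^{-\lambda xv^{-1}} \), and then shows \( C_\lambda \coloneqq \int_V v\abs{c_{\lambda,v}}\nuv \) satisfies \( C_\lambda \leq (p+q)\e^{-\lambda b^{-1}}C_\lambda \), forcing \( h = 0 \) and thus \( \tif = g \in D(\tia_{\exts}) \) for large \( \lambda \). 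The paper's argument trades your direct computation for an abstract existence-plus-uniqueness step, thereby entirely avoiding the two technical points you flag: justifying differentiation under the integral sign against the measure \( \mes(\dd w,v) = pwv^{-1}\tm(w,v)\nuw + q\delta_v(\dd w) \), and the Sobolev gluing of weak derivatives across the countably many interfaces of \( \tio \). Your plan is correct in principle and is closer to the heuristics of the method of images, and it has the merit of identifying \( \tia_{\exts}\tif = \widetilde{Af} \) explicitly; but those two technical points do need to be carried out rigorously (Fubini plus integration by parts for a.e.\ \( v \), a standard gluing lemma for absolutely continuous functions on the overlapping intervals \( \overline{\Omega_{i,v}} \)), whereas the paper's resolvent detour sidesteps them entirely.
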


\begin{proof}
Assume first that \( \tif \in D(\tia_{\exts}) \).
Of course \( f = \tif_{|\Omega} \in W^1(\Omega) \).
We need to show that~\eqref{eq:bc-mes} holds.
Let \( U \) be a~measurable subset of \( V \) such that
\( \nu(V \setminus U) = 0 \) and \( \tif_{|I \times U} \) is weakly
differentiable with respect to \( x \).
Without loss of generality we may assume that the function
\( [0,1] \ni x \mapsto f(x,v) \) is continuous for every \( v \in U \) and
\( f(x,v) = \tif(x,v) \) for \( (x,v) \in [0,1] \times U \).
Thus, for \( v \in U \) by~\eqref{eq:ext-wzor} we have
\begin{equation*}
f(0,v) = \tif(0,v) = \int_V \tif(1,w) \mes(\dd w,v) = \int_V f(1,w) \mes(\dd w,v),
\end{equation*}
proving that \( f \in D(A) \).

On the other hand, let \( f \in D(A) \).
The Hille-Yosida theorem implies that there exists \( \lambda_0 > 0 \) such that
for all \( \lambda > \lambda_0 \) the operator
\[
\lambda - \tia_{\exts}\colon D(\tia_{\exts}) \to \ltio
\]
is bijective.
Let \( \lambda > \lambda_0 \) and set
\[
F(x,v) := \lambda f(x,v) + v\partial_x f(x,v), \qquad (x,v) \in \Omega.
\]
Then \( F \in \lo \) since \( f \in D(A) \), and hence there exists
\( g \in D(\tia_{\exts}) \) satisfying
\begin{equation}
\label{eq:rown-resolwenty}
\lambda g - \tia_{\exts} g = \tilde F.
\end{equation}
By the first part of the proof it follows that \( E^{-1}g \in D(A) \).
Therefore, letting
\[
h \coloneqq E^{-1} g - f
\]
we see that \( h \in D(A) \) and \eqref{eq:rown-resolwenty} gives
\begin{equation}
\label{eq:slabe-mocne}
\lambda h(x,v) + v \partial_x h(x,v) = 0, \qquad x \in I
\end{equation}
for almost every \( v \in V \).
Thus, if we fix such \( v \in V \), then by~\cite[Corollary~3.1.6]{hormander}
the function \( I \ni x \mapsto h(x,v) \) is in fact a~classical solution
to~\eqref{eq:slabe-mocne}.
This implies
\[
h(x,v) = c_{\lambda,v} \e^{-\lambda x v^{-1}}, \qquad x \in I
\]
for a~constant \( c_{\lambda,v} \).
However, since~\eqref{eq:bc-mes} holds for \( h \), we get
\begin{equation*}
c_{\lambda,v} = \int_V c_{\lambda,w} \e^{-\lambda w^{-1}} \mes(\dd w,v).
\end{equation*}
Let
\[
C_{\lambda} \coloneqq \int_V v \abs{ c_{\lambda,v} } \nuv.
\]
We note that \( C_{\lambda} \) is finite.
Indeed, we have
\begin{equation*}
\begin{split}
\int_V \abs{ c_{\lambda,w} } \e^{-\lambda w^{-1}} \mes(\dd w,v) &= \int_V \int_I
\abs{ c_{\lambda,w} } \e^{-\lambda w^{-1} } \dd x \mes(\dd w,v)\\
&\leq \int_V \int_I \abs{ h(x,w) } \dd x \mes(\dd w,v),
\end{split}
\end{equation*}
the inequality resulting from
\( \e^{-\lambda w^{-1}} < \e^{-\lambda x w^{-1}} \) for \( x \in I \) and
\( w \in V \).
Then, by the Fubini theorem,
\begin{align*}
  C_{\lambda} &\leq \int_I \int_V \int_V v \abs{ h(x,w) } \mes(\dd w,v)
                \nuv \dd x\\
              &= (p+q) \int_I \int_V w \abs{ h(x,w) } \nuw \dd x\\
              &\leq (p+q) b \normo{ h },
\end{align*}
the equality being a~consequence of Lemma~\ref{lem:int-mes} with
\( g(w) = g_x(w) \coloneqq w \abs{ h(x,w) } \); this \( g \) is
\( \nu \)-integrable for almost every \( x \) because \( h \in \lo \).
Since \( \e^{-\lambda w^{-1}} < \e^{-\lambda b^{-1}} \) for \( w \in V \),
\[
C_{\lambda} \leq \e^{-\lambda b^{-1}} \int_V \int_V v \abs{ c_{\lambda,w} }
\mes(\dd w,v) \nuv =
%(p+q) \e^{-\lambda b^{-1}} \int_V \int_V w \abs{ c_{\lambda,w} } \nuw \nuv
(p+q) \e^{-\lambda b^{-1}} C_{\lambda}
\]
by Lemma~\ref{lem:int-mes} with \( g(w) \coloneqq w \abs{ c_{\lambda,w} } \);
this \( g \) is \( \nu \)-integrable because \( C_{\lambda} \) is finite.
This is true for all \( \lambda > \lambda_0 \) which leads to
\( C_{\lambda} = 0 \) for
\( \lambda > \lambda_0' \coloneqq \max\lrp[\big]{\lambda_0, b \ln(p+q)} \).
Hence \( c_{\lambda,v} = 0 \) for almost every \( v \in V \), provided
\( \lambda > \lambda_0' \).
Thus \( h = 0 \) and \( f = E^{-1}g \).
Finally, by the uniqueness of boundary extensions,
\[
\tif = EE^{-1}g = g \in D(\tia_{\exts}),
\]
which completes the proof.
\end{proof}

Now we are ready to prove the generation theorem.

\begin{proof}[Proof of Theorem~\textup{\ref{thm:glowne}}]
Let \( \sst = \sem \) be the family of linear operators in \( \lo \) defined by
\begin{equation}
\label{eq:kelvin}
T(t) \coloneqq E^{-1} \tit_{\exts} (t) E, \qquad t \geq 0.
\end{equation}
Then, see for example~\cite[7.4.22]{bobrowskiFA}, \( \sst \) is a strongly
continuous semigroup in \( \lo \) similar to \( \tisemy \).
Moreover, its generator is the operator \( E^{-1}\tia_{\exts} E \) with domain
\( E^{-1}D(\tia_{\exts}) \), which equals \( D(A) \) by
Lemma~\ref{lem:dziedziny}.
If \( f \in D(A) \), then by~\eqref{eq:ext-wzor} it follows that
\[
\partial_x \tif(x,v) = \partial_x f(x,v)
\]
for almost every \( (x,v) \in \Omega \).
Therefore
\[
(E^{-1}\tia_{\exts}E)f = Af, \qquad f \in D(A).
\]
This shows that \( A \) is the generator of \( \sst \) and the theorem follows.
\end{proof}

We call the semigroup generated by \( A \) the \emph{Rotenberg semigroup} and in
what follows denote it by \( \sst = \sem \).
By~\eqref{eq:kelvin} we have
\begin{equation}
\label{eq:wzor-na-polgrupe}
T(t)f(x,v) = \tif(x-tv,v), \qquad t \geq 0,\ (x,v) \in \Omega,\ f \in \lo,
\end{equation}
as conjectured in~\eqref{eq:sem}.
Furthermore, we introduce
\begin{equation}
\label{eq:xwv}
x_{wvt} \coloneqq 1 + x wv^{-1} - tw;
\end{equation}
for a~cell characterized by a~pair \( (x,v) \in \Omega \) at time
\( t \geq 0 \), \( x_{wvt} \) is the maturity parameter of a~potential mother of
the cell with maturation speed \( w \) at time \( 0 \), under proviso that
\( x_{wvt} \in I \).
Relations~\eqref{eq:ext-wzor} and~\eqref{eq:wzor-na-polgrupe} imply that given
\( t \geq 0 \) and \( f \in \lo \),
\begin{equation*}
T(t) f(x,v) = f(x-tv,v)[x > tv] + \int_V \tif(x_{wvt},w) \mes(\dd w,v) [x \leq tv]
\end{equation*}
for almost every \( (x,v) \in \Omega \).
In particular, let \( t \in [0,b^{-1}] \).
We have \( x_{wvt} > 1-tw > 1-wb^{-1} > 0 \) for \( (x,v) \in \Omega \) and
\( w \in V \).
Hence \( \tif(x_{wvt}) = f(x_{wvt}) \), and we may rewrite the above relation in
the form
\begin{equation}
\label{eq:T-small-t}
T(t) f(x,v) = f(x-tv,v)[x > tv] + \int_V f(x_{wvt},w) \mes(\dd w,v) [x \leq tv]
\end{equation}
for almost every \( (x,v) \in \Omega \), provided that \( t \in [0,b^{-1}] \).

It is worth noting that the extension \( \tif \) is nonnegative provided that
\( f \) is nonnegative, and hence \( T(t) \) is a~positive operator for
\( t \geq 0 \).
We use this fact later on.

\section{Growth estimates}
\label{sec:growth-estimates}

In this section we estimate the growth of the Rotenberg semigroup \( \sst \).
Let \( \sst^{*} = \set{ T^{*}(t)\colon t \geq 0 } \) be the dual semigroup of
\( \sst \), see~\cite[I.5.14]{engelnagel}.
First we find an explicit formula for the adjoint
\[
T^{*}(t)\colon \linf \to \linf
\]
of \( T(t) \) for \( t \in [0,b^{-1}] \), where \( \linf \) is the space of
(equivalence classes of) essentially bounded functions on \( \Omega \) with
standard essential supremum norm \( \norm{ \, \cdot \, }_{\linf} \).
%For \( x \in I \), \( w,v \in V \) and \( t \geq 0 \)
We denote
\[
x_{wvt}^{*} \coloneqq (x + tv - 1) wv^{-1};
\]
for a~cell characterized by a~pair \( (x,v) \in \Omega \) at time \( 0 \),
\( x_{wvt}^{*} \) is the maturity parameter of a~potential daughter of the cell
with maturation speed \( w \) at time \( t \geq 0 \), under proviso that
\( x_{wvt}^{*} \in I \).
Note that \( x_{wvt}^{*} < 0 \) reflects the fact that the cell characterized by
\( (x,v) \) at time \( 0 \) will not mature fast enough to divide before time
\( t \).
Analogously, \( x_{wvt}^{*} \geq 0 \) means that the cell will divide before
time \( t \).

To simplify notation, let
\begin{equation*}
\tms(w,v) \coloneqq \tm(v,w), \qquad w,v \in V,
\end{equation*}
and for \( v \in V \) let \( \mess(v,\cdot) = \mess_{\nu}(v,\cdot) \) be the
measure on \( V \) defined by
\[
\mess(v,\dd w) \coloneqq p \tms(w,v) \nuw + q \delta_v(\dd w).
\]

\begin{lemma}
\label{lem:dual}
For \( t \in [0,b^{-1}] \) the adjoint operator \( T^{*}(t) \) of \( T(t) \) is
given by
\begin{equation}
\label{eq:wzor-na-dualny}
T^{*}(t) \phi(x,v) = \phi(x + tv,v) [x_{vvt}^{*} < 0] + \int_V
\phi(x_{wvt}^{*},w) \mess(v,\dd w) [0 \leq x_{vvt}^{*}]
\end{equation}
for almost every \( (x,v) \in \Omega \).
\end{lemma}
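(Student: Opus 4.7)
The plan is to verify the formula by testing against arbitrary $f \in \lo$ and $\phi \in \linf$ via the duality identity
\[
\int_{\Omega} T(t) f \cdot \phi \dmu \;=\; \int_{\Omega} f \cdot T^{*}(t) \phi \dmu,
\]
and then reading off $T^{*}(t)\phi$ from the right-hand side by the uniqueness of the adjoint. The crucial input is the explicit small-time formula~\eqref{eq:T-small-t}, which is valid precisely because $t \in [0, b^{-1}]$ guarantees $tv \leq 1$ and hence $x_{wvt} \in I$ whenever $x \in (0, tv]$ and $w, v \in V$.

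First I would split $\langle T(t) f, \phi\rangle$ into the free-streaming piece $J_1 = \int_{\Omega} f(x - tv, v)\phi(x,v) [x > tv] \dmu$ and the boundary piece $J_2 = \int_{\Omega} \phi(x,v)[x \leq tv] \int_V f(x_{wvt}, w) \mes(\dd w, v) \dmu$. For $J_1$, I substitute $y = x - tv$ in the inner $x$-integral: the region $tv < x < 1$ becomes $0 < y < 1 - tv$, and I can rewrite the indicator as $[y + tv < 1]$. Since $x^{*}_{vvt}(y, v) = y + tv - 1$, this indicator is exactly $[x^{*}_{vvt} < 0]$, so
\[
J_1 = \int_{\Omega} f(y, v)\,\phi(y + tv, v)\,[x^{*}_{vvt}(y,v) < 0] \dmu(y, v),
\]
which is already of the form $\langle f, \cdot \rangle$ and matches the first summand in~\eqref{eq:wzor-na-dualny}.

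For $J_2$, I would use Lemma~\ref{lem:int-mes}'s style of unpacking: write $\mes(\dd w, v) = p w v^{-1} \tm(w,v) \nu(\dd w) + q \delta_v(\dd w)$ and treat the two terms separately. In the $p$-term, Fubini lets me integrate over $x$ first, with fixed $v$ and $w$; the substitution $y = 1 + x w v^{-1} - tw$ (i.e.\ the one that turns $x_{wvt}$ into $y$) has Jacobian $\dd y = wv^{-1} \dd x$, and the range $x \in (0, tv]$ becomes $y \in (1 - tw, 1]$. The factor $wv^{-1}$ cancels against the Jacobian, and the argument $x$ of $\phi$ becomes $(y + tw - 1)v w^{-1}$. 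Interchanging the names $v \leftrightarrow w$ and using $\tm(v, w) = \tms(w, v)$, this factor transforms into $\phi(x^{*}_{wvt}(y, v), w)$ with the original variable $v$ now in the outer integral, while the range condition $y \in (1 - tv, 1]$ on the renamed variable becomes $[0 \leq x^{*}_{vvt}(y, v)]$. The $q$-term is handled identically with $w = v$, and it reassembles with the $p$-term via the Dirac-measure part of $\mess(v, \dd w) = p \tms(w,v) \nuw + q \delta_v(\dd w)$.

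Combining the two pieces produces
\[
\langle T(t) f, \phi \rangle \;=\; \int_{\Omega} f(y, v) \Bigl\{ \phi(y + tv, v) [x^{*}_{vvt} < 0] + \int_V \phi(x^{*}_{wvt}, w) \mess(v, \dd w)\, [0 \leq x^{*}_{vvt}] \Bigr\} \dmu,
\]
and the formula for $T^{*}(t)\phi$ is extracted. The main obstacle is bookkeeping: performing the $x \mapsto y$ substitution inside the double $v,w$-integral, renaming variables so the outer integration matches the space $\Omega$ of $(y, v)$, and verifying that the two indicator sets transform exactly into $[x^{*}_{vvt} < 0]$ and $[0 \leq x^{*}_{vvt}]$ under these changes of variable; once these are aligned, the rest is mechanical.
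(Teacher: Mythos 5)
Your proposal is correct and follows essentially the same route as the paper's proof: test against the duality identity, use the small-time formula~\eqref{eq:T-small-t}, change variables $x \mapsto x-tv$ in the free-streaming term and $x \mapsto x_{wvt}$ in the boundary term, then swap the roles of $v$ and $w$ (equivalently, change the order of integration) so that the boundary contribution aligns with the form $\int_{\Omega} f(y,v)\,(\cdots)\,\dmu$. The paper keeps the $p$- and $q$-contributions as two separate integrals $I_2$, $I_3$ while you lump them into $J_2$ from the outset, and you write out the Jacobian explicitly rather than invoking the change of variables tersely; these are cosmetic differences and the argument is the same.
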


\begin{proof}
Let \( \phi \in \linf \) and fix \( t \in [0,b^{-1}] \).
By the definition of the adjoint operator
\begin{equation}
\label{eq:dualny}
\int_{\Omega} f T^{*}(t) \phi \dd \mu = \int_{\Omega} \phi T(t)f \dd \mu, \qquad
f \in \lo.
\end{equation}
By~\eqref{eq:T-small-t} and~\eqref{eq:mes}, we have
\begin{equation}
\label{eq:T-expl}
\int_{\Omega} \phi T(t) f \dd \mu = I_1 + I_2 + I_3,
\end{equation}
where
\begin{align*}
  I_1 &= \int_V \int_I \phi(x,v)f(x-tv)[x > tv] \dd x \nuv,\\
  I_2 &= p\int_V \int_I \int_V wv^{-1} \tm(w,v) \phi(x,v) f(x_{wvt},w) [x \leq tv]
        \nuw \dd x \nuv,\\
  I_3 &= q\int_V \int_I \phi(x,v) f(x_{vvt},v) [x \leq tv] \dd x \nuv.
\end{align*}
Changing variables \( x \mapsto x - tv \) we obtain
\[
I_1 = \int_V \int_I \phi(x+tv,v) f(x,v) [x_{vvt}^{*} < 0] \dd x \nuv,
\]
since \( [0 < x + tv < 1][x + tv > tv] = [0 < x < 1][x_{vvt}^{*} < 0] \) for
\( (x,v) \in \Omega \).
Similarly, changing variables \( x \mapsto x_{wvt} \), or equivalently
\( x_{vwt}^{*} \mapsto x \),
\begin{equation*}
I_2 = p \int_V \int_I \int_V \tm(w,v) \phi(x_{vwt}^{*},v) f(x,w) [0 \leq
x_{wwt}^{*}] \nuw \dd x \nuv,
\end{equation*}
and
\[
I_3 = q \int_V \int_I \phi(x_{vvt}^{*},v) f(x,v) [0 \leq x_{vvt}^{*}] \dd x
\nuv,
\]
since \( [0 < x_{vwt}^{*} < 1][x_{vwt}^{*} \leq tv] = [0 < x < 1][0 \leq
x_{wwt}^{*}] \) for \( (x,v) \in \Omega \) and \( w \in V \).
By~\eqref{eq:dualny}, \eqref{eq:T-expl}, and the definition of \( \tms \),
changing the order of integration in \( I_2 \), we
obtain~\eqref{eq:wzor-na-dualny}.
\end{proof}

We know that
\begin{equation}
\label{eq:rowne-normy}
\norm{ T(t) }_{\lino\lo} = \norm{ T^{*}(t) }_{\lino\linf}, \qquad t \geq 0.
\end{equation}
Since \( T(t) \) is a~positive operator for \( t \geq 0 \), the same is true for
\( T^{*}(t) \).
Hence, for \( \phi \in \linf \) such that \( \norm{ \phi }_{\linf} \leq 1 \) we
have
\[
-T^{*}(t) \ind_{\Omega} \leq -T^{*}(t) \abs{ \phi } \leq T^{*}(t) \phi \leq
T^{*}(t) \abs{ \phi } \leq T^{*}(t) \ind_{\Omega},
\]
where \( \ind_{\Omega} \) is the indicator function of \( \Omega \).
This leads to
\begin{equation*}
\begin{split}
\norm{ T^{*}(t) }_{\lino{\linf}} = \sup_{\substack{\phi \in \linf\\\norm{ \phi
  }_{\linf} \leq 1}} \norm{ T^{*}(t)\phi }_{\linf} = \norm{ T^{*}(t)
  \ind_{\Omega} }_{\linf}, \qquad t \geq 0,
\end{split}
\end{equation*}
and by~\eqref{eq:rowne-normy} we obtain
\begin{equation}
\label{eq:norma-inf}
\norm{ T(t) }_{\lino\lo} = \norm{ T^{*}(t) \ind_{\Omega} }_{\linf}, \qquad t
\geq 0.
\end{equation}
Finally, by~\eqref{eq:wzor-na-dualny} it follows that
\begin{equation}
\label{eq:dualny-na-1}
T^{*}(t) \ind_{\Omega}(x,v) = [x_{vvt}^{*} < 0] + (p+q) [0 \leq x_{vvt}^{*}]
\end{equation}
for \( t \in [0,b^{-1}] \) and almost every \( (x,v) \in \Omega \).

\begin{lemma}
\label{lem:norma-dla-malych-t}
We have
\[
\norm{ T(t) }_{\lino\lo} = \max(1,p+q), \qquad t \in (0,b^{-1}].
\]
\end{lemma}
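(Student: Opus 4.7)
The plan is to read the norm off the explicit expression for $T^{*}(t)\ind_{\Omega}$. By identity~\eqref{eq:norma-inf} established just above, it suffices to compute the essential supremum of $T^{*}(t)\ind_{\Omega}$ as a function on $\Omega$ for each $t\in(0,b^{-1}]$. Formula~\eqref{eq:dualny-na-1} tells us this function is piecewise constant, taking the value $1$ on the set where $x_{vvt}^{*}<0$ and the value $p+q$ on the set where $x_{vvt}^{*}\geq 0$. So the proof reduces to verifying that both of these sets have positive $\mu$-measure; once this is done, the essential supremum of a $\{1,p+q\}$-valued function equals $\max(1,p+q)$, and~\eqref{eq:norma-inf} gives the claim.

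For the positivity of measures, note that $x_{vvt}^{*}=(x+tv-1)\cdot vv^{-1}=x+tv-1$, so the two conditions become $x<1-tv$ and $x\geq 1-tv$, respectively. For $t\in(0,b^{-1}]$ and any $v\in V=(a,b)$ we have $0<tv<tb\leq 1$, hence $0<1-tv<1$. Consequently, for every such $v$, the slice $\{x\in I: x<1-tv\}$ has Lebesgue measure $1-tv>0$, and the slice $\{x\in I: x\geq 1-tv\}$ has Lebesgue measure $tv>0$. Since $\nu(V)>0$, Fubini's theorem implies that both regions in $\Omega$ have positive $\mu$-measure.

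There is no real obstacle here; the only minor subtlety is to observe that strict inequality $tv<1$ holds because $v<b$, which keeps the upper region non-negligible even at the endpoint $t=b^{-1}$. Combining this with the previous paragraph and~\eqref{eq:norma-inf} yields $\norm{T(t)}_{\lino\lo}=\max(1,p+q)$ for all $t\in(0,b^{-1}]$, as required.
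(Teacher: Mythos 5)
Your proof is correct and follows exactly the paper's own argument: reduce via~\eqref{eq:norma-inf} to computing $\norm{T^{*}(t)\ind_{\Omega}}_{\linf}$, read off the two-valued formula~\eqref{eq:dualny-na-1}, and observe that both level sets $\set{x < 1-tv}$ and $\set{x \geq 1-tv}$ have positive $\mu$-measure. The only cosmetic difference is that you specialize to $V=(a,b)$ while the lemma holds for general $V \subseteq (a,b)$ with measure $\nu$; your slice-by-slice Fubini reasoning carries over verbatim since $v < b$ still gives $0 < 1-tv < 1$ for every $v \in V$.
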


\begin{proof}
Let \( t \in (0,b^{-1}] \).
Equality~\eqref{eq:dualny-na-1} shows that
\[
\norm{ T^{*}(t) \ind_{\Omega} }_{\linf} = \max(1,p+q)
\]
because the sets \( \set{(x,v) \in \Omega\colon x < 1 - tv} \) and
\( \set{(x,v) \in \Omega\colon 1-tv \leq x } \) are both
of positive \( \mu \)-measure.
By~\eqref{eq:norma-inf} this is the desired conclusion.
\end{proof}

Recall that a~positive operator \( S\colon \lo \to \lo \) is called
\emph{Markov} if \( \normo{ Sf } = 1 \) provided that \( f \in \lo \) is
nonnegative and \( \normo{ f } = 1 \).
In case \( p + q = 1 \) we may improve Lemma~\ref{lem:norma-dla-malych-t} as
follows.

\begin{lemma}
\label{lem:markowskosc}
Let \( t \in [0,b^{-1}] \) and assume that \( p + q = 1 \).
Then the operator \( T(t) \) is Markov.
\end{lemma}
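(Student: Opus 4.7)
Since \( T(t) \) is a~positive operator by the remark following~\eqref{eq:T-small-t}, the Markov property reduces to showing that \( \normo{T(t)f} = \normo{f} \) for every nonnegative \( f \in \lo \). The plan is to compute \( \normo{T(t)f} = \int_{\Omega} T(t)f \dmu \) directly from the explicit formula~\eqref{eq:T-small-t}, which is available precisely because \( t \in [0,b^{-1}] \).

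Fix a~nonnegative \( f \in \lo \). By~\eqref{eq:T-small-t} the integral \( \int_\Omega T(t)f \dmu \) splits into a~``free flow'' term
\[
I_{\mathrm{free}} \coloneqq \int_V \int_I f(x-tv,v)\,[x > tv]\dd x \nuv
\]
and a~``boundary'' term
\[
I_{\mathrm{bd}} \coloneqq \int_V \int_I \int_V f(x_{wvt},w) \mes(\dd w,v)\,[x \leq tv] \dd x \nuv.
\]
For \( I_{\mathrm{free}} \), the change of variables \( x \mapsto x - tv \) (with \( v \) held fixed) immediately yields
\[
I_{\mathrm{free}} = \int_V \int_0^{1-tv} f(x,v) \dd x \nuv.
\]
For \( I_{\mathrm{bd}} \), I interchange the inner \( x \)-integral with the \( \mes(\dd w,v) \)-integral using Fubini, and then apply the change of variables \( x \mapsto 1 + xwv^{-1} - tw = x_{wvt} \). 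The Jacobian is \( v/w \), and the image of the interval \( \{0 < x \leq tv\} \) is \( \{1-tw < x \leq 1\} \). This gives
\[
I_{\mathrm{bd}} = \int_V \int_V \frac{v}{w} \left( \int_{1-tw}^1 f(y,w) \dd y \right) \mes(\dd w,v) \nuv .
\]

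At this point Lemma~\ref{lem:int-mes} applies with \( g(w) \coloneqq \int_{1-tw}^1 f(y,w) \dd y \); the function \( g \) is nonnegative and \( \nu \)-integrable by Tonelli and \( f \in \lo \). The lemma yields
\[
I_{\mathrm{bd}} = (p+q) \int_V \int_{1-tw}^1 f(y,w) \dd y \nuw = \int_V \int_{1-tw}^1 f(y,w) \dd y \nuw ,
\]
using the hypothesis \( p + q = 1 \). Adding \( I_{\mathrm{free}} \) and \( I_{\mathrm{bd}} \) reconstitutes the integral of \( f \) over \( \Omega \), and we conclude \( \normo{T(t)f} = \normo{f} \).

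The only slightly delicate point is the bookkeeping with the Iverson brackets when applying the change of variables in \( I_{\mathrm{bd}} \): one must check that the image rectangle in \( (y,w) \) is exactly \( (1-tw,1] \times V \), so that the integrand becomes independent of the original \( v \) and the outer \( \nuv \)-integration is free to couple with \( \mes(\dd w,v) \) via Lemma~\ref{lem:int-mes}. Once this is verified, the proof is essentially a~one-line computation built on a~formula (Lemma~\ref{lem:int-mes}) that has already been established.
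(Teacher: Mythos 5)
Your computation is correct, but it takes a genuinely different route from the paper's. The paper argues on the dual side: it invokes the already-derived adjoint formula~\eqref{eq:dualny-na-1} (from Lemma~\ref{lem:dual}) to conclude immediately that \( T^{*}(t)\ind_{\Omega} = \ind_{\Omega} \) when \( p+q=1 \), and then writes \( \normo{T(t)f} = \int_\Omega f\, T^{*}(t)\ind_{\Omega}\dmu = \normo{f} \). You instead compute \( \int_\Omega T(t)f\dmu \) directly from the primal formula~\eqref{eq:T-small-t}, via the change of variables \( x\mapsto x_{wvt} \) (with Jacobian \( v/w \), image interval \( (1-tw,1] \)) and Lemma~\ref{lem:int-mes}. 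The two routes have essentially the same analytic content --- the change of variables you perform is the same one buried inside the proof of Lemma~\ref{lem:dual} --- but yours is self-contained given only~\eqref{eq:T-small-t} and Lemma~\ref{lem:int-mes}, whereas the paper's is a one-liner \emph{provided} the duality machinery of Section~3 is already in place. The paper's choice is natural in context since \( T^{*}(t)\ind_\Omega \) is used repeatedly for norm estimates throughout Section~3; your version would be the preferable proof if the lemma were to stand alone. One small detail worth making explicit in your write-up: \( \nu \)-integrability of \( g(w)=\int_{1-tw}^1 f(y,w)\dd y \) follows simply from \( g(w)\leq\int_I f(y,w)\dd y \) and \( f\in\lo \), which justifies applying Lemma~\ref{lem:int-mes}.
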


\begin{proof}
Recall that the operator \( T(t) \) is positive.
Let \( f \in \lo \) be nonnegative and such that \( \normo{ f } = 1 \).
Then by~\eqref{eq:dualny},
\[
\normo{ T(t)f } = \int_{\Omega} \ind_{\Omega} T(t) f \dd \mu = \int_{\Omega} f
T^{*}(t) \ind_{\Omega} \dd \mu.
\]
However, by~\eqref{eq:dualny-na-1} we have
\( T^{*}(t) \ind_{\Omega} = \ind_{\Omega} \), thus
\( \normo{ T(t)f } = \normo{ f } \), which completes the proof.
\end{proof}

\begin{theorem}
\label{thm:norma-polgrupy}
Let \( t \geq 0 \).
\begin{enumerate}
  \item\label{item:6} If \( p+q > 1 \), then
\begin{equation}
\label{eq:upper-bound}
\norm{ T(t) }_{\lino\lo} \leq (p+q)^{\ceil{tb}},
\end{equation}
where \( \ceil{tb} \) is the smallest integer larger than or equal to \( tb \).
  \item\label{item:5} If \( p+q = 1 \), then the operator \( T(t) \) is Markov.
  \item\label{item:grube} If \( p + q < 1 \), then
\[
\norm{ T(t) }_{\lino\lo} \leq 1.
\]
\end{enumerate}
\end{theorem}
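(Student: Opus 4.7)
The plan is to bootstrap the small-time estimates of Lemma~\ref{lem:norma-dla-malych-t} and Lemma~\ref{lem:markowskosc} to arbitrary $t \geq 0$ by means of the semigroup property $T(t+s) = T(t) T(s)$. For any $t > 0$ write
\[
t = n b^{-1} + s, \qquad n = \lfloor tb \rfloor,\ s \in [0, b^{-1}),
\]
so that $T(t) = T(s) T(b^{-1})^n$, and apply submultiplicativity of the operator norm. This reduces everything to the already known bounds on $[0, b^{-1}]$.

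\textbf{Case \ref{item:6}:} Using $\|T(b^{-1})\|_{\lino \lo} = p+q$ and $\|T(s)\|_{\lino \lo} \leq \max(1,p+q) = p+q$ when $s > 0$ (and $\|T(0)\| = 1$), I would get
\[
\|T(t)\|_{\lino \lo} \leq \|T(s)\|_{\lino \lo} \|T(b^{-1})\|_{\lino \lo}^{n} \leq (p+q)^{n+1} \quad \text{if } s > 0,
\]
and $\|T(t)\|_{\lino \lo} \leq (p+q)^{n}$ if $s=0$. A quick case-split shows these both equal $(p+q)^{\lceil tb \rceil}$, since $\lceil tb \rceil = \lfloor tb \rfloor + 1$ when $tb \notin \Z$ and $\lceil tb \rceil = \lfloor tb \rfloor$ otherwise.

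\textbf{Case \ref{item:5}:} Since the composition of Markov operators is Markov (positivity is preserved, and for nonnegative $f$ with $\|f\|_{\lo} = 1$ one has $\|T(t)T(s)f\|_{\lo} = \|T(s)f\|_{\lo} = 1$), Lemma~\ref{lem:markowskosc} applied to $T(b^{-1})$ and to $T(s)$ yields that $T(t) = T(s) T(b^{-1})^n$ is a product of Markov operators, hence Markov.

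\textbf{Case \ref{item:grube}:} Lemma~\ref{lem:norma-dla-malych-t} gives $\|T(\tau)\|_{\lino \lo} = \max(1, p+q) = 1$ for every $\tau \in (0, b^{-1}]$, so
\[
\|T(t)\|_{\lino \lo} \leq \|T(s)\|_{\lino \lo}\, \|T(b^{-1})\|_{\lino \lo}^{n} \leq 1.
\]

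There is no real obstacle here: all three statements follow directly from the semigroup identity once the bounds on an interval of length $b^{-1}$ are in hand. The only mildly delicate point is the indexing in Case \ref{item:6}, where I need to distinguish $tb \in \Z$ from $tb \notin \Z$ so that the exponent comes out as $\lceil tb \rceil$ rather than $\lfloor tb \rfloor + 1$.
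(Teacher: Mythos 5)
Your proof is correct and follows essentially the same strategy as the paper: bootstrap the small-time bounds from Lemmas~\ref{lem:norma-dla-malych-t} and~\ref{lem:markowskosc} to all $t$ via the semigroup property. The one difference is your choice of decomposition. You use Euclidean division, $t = n b^{-1} + s$ with $n = \floor{tb}$ and $s \in [0,b^{-1})$, which forces the case split $s = 0$ versus $s > 0$ that you flag at the end. The paper instead sets $n \coloneqq \ceil{tb}$ and $s \coloneqq t/n$, so that $s \in (0, b^{-1}]$ and $T(t) = T(s)^n$ cleanly, with the exponent $\ceil{tb}$ appearing immediately and no case analysis needed. Both are valid; the paper's choice of splitting $t$ into $\ceil{tb}$ equal pieces rather than $\floor{tb}$ pieces of length $b^{-1}$ plus a remainder is just a tidier way to land on the same estimate.
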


\begin{proof}
Let \( n \coloneqq \ceil{tb} \) and \( s \coloneqq t/n \).
Hence \( s \in (0,b^{-1}] \).
By the semigroup property and Lemma~\ref{lem:norma-dla-malych-t} we have
\begin{equation*}
\norm{ T(t) }_{\lino\lo} \leq \norm{ T(s) }_{\lino\lo}^n \leq \max(1,p+q)^n,
\end{equation*}
which proves~\ref{item:6} and~\ref{item:grube}.

In order to show~\ref{item:5} we fix \( n \) and \( s \) as above.
Then by Lemma~\ref{lem:markowskosc} the operator \( T(s) \) is Markov, and so is
\( T(t) = [T(s)]^n \) as a~power of a~Markov operator.
\end{proof}

\begin{theorem}
\label{thm:norma-polgrupy-leb}
Assume that \( p + q < 1 \) and \( V = (a,b) \) with the Lebesgue measure.
\begin{enumerate}
  \item\label{item:3} If \( a = 0 \), then
\[
\norm{ T(t) }_{\lino\lo} = 1, \qquad t \geq 0.
\]
  \item\label{item:4} If \( a > 0 \), then
\begin{equation}
\label{eq:tlea}
\norm{ T(t) }_{\lino\lo} = 1, \qquad 0 \leq t < a^{-1},
\end{equation}
and
\begin{equation}
\label{eq:oszacowanie-t}
\norm{ T(t) }_{\lino\lo} \leq (p+q)^{\floor{ta}}, \qquad t \geq a^{-1},
\end{equation}
where \( \floor{ta} \) is the largest integer less than or equal to \( ta \). 
\end{enumerate}
\end{theorem}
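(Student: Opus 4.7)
The claim naturally splits into the lower bound \( \|T(t)\|_{\lino\lo} = 1 \) in part~(i) and the first half of part~(ii), and the decay estimate \( \|T(t)\|_{\lino\lo} \leq (p+q)^{\floor{ta}} \) in the second half of part~(ii). Since Theorem~\ref{thm:norma-polgrupy}\ref{item:grube} already yields \( \|T(t)\|_{\lino\lo} \leq 1 \) for every \( t \geq 0 \), what remains on the positive side are the matching lower bounds and the decay estimate.

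For the lower bounds, the plan is to exhibit a nonnegative test function \( f \) supported in a slab where no cell matures by time \( t \); there formula~\eqref{eq:wzor-na-polgrupe} reduces to pure translation and a change of variables in \( x \) gives \( \|T(t)f\|_{\lo} = \|f\|_{\lo} \). When \( a = 0 \) and \( t > 0 \), choose \( v_0 \in (0, b) \) with \( tv_0 < 1/2 \) and take \( f = \ind_{[0,1/2] \times (0, v_0)} \); when \( a > 0 \) and \( 0 < t < a^{-1} \) the interval \( (a, \min(b, 1/t)) \) is nonempty, and any \( v_0 \) chosen from it permits \( f = \ind_{[0, 1-tv_0] \times (a, v_0)} \). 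In both cases \( x + tv < 1 \) on the support of \( f \), so the boundary extension \( \tif \) agrees with \( f \) on the preimage that actually enters the formula, and the desired mass conservation follows.

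For the decay estimate my plan is first to prove the key inequality
\[
\|T(a^{-1})\|_{\lino\lo} \leq p+q
\]
and then iterate. Writing \( t = k/a + r \) with \( k \coloneqq \floor{ta} \) and \( r \in [0, a^{-1}) \), the semigroup property and Theorem~\ref{thm:norma-polgrupy}\ref{item:grube} give \( \|T(t)\|_{\lino\lo} \leq \|T(r)\|_{\lino\lo} \cdot \|T(a^{-1})\|_{\lino\lo}^{k} \leq (p+q)^{k} \), as needed.

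The main obstacle is the key inequality. I would prove it via the duality identity~\eqref{eq:norma-inf} by bounding \( \|T^{*}(a^{-1}) \ind_{\Omega}\|_{\linf} \) through iteration of the explicit small-time formula~\eqref{eq:wzor-na-dualny}. Fix an integer \( n \geq b/a \) and let \( s \coloneqq 1/(na) \leq b^{-1} \), so that~\eqref{eq:wzor-na-dualny} applies to \( T^{*}(s) \) and \( T^{*}(a^{-1}) = T^{*}(s)^{n} \). By induction on \( k \in \{0, 1, \ldots, n\} \) I would establish that for almost every \( (x, v) \in \Omega \),
\[
T^{*}(ks) \ind_{\Omega}(x, v) \leq [x + ksv < 1] + (p+q)[x + ksv \geq 1].
\]
In the inductive step, applying~\eqref{eq:wzor-na-dualny} with \( \phi \coloneqq T^{*}(ks)\ind_{\Omega} \) splits into two cases: on \( \{x + sv < 1\} \) the formula reduces to evaluating \( \phi \) at the shifted point \( (x+sv, v) \), where the inductive hypothesis applies directly (note \( (x+sv) + ksv = x + (k+1)sv \)); on \( \{x + sv \geq 1\} \) the integral term is controlled by the trivial bound \( \phi \leq 1 \) from Theorem~\ref{thm:norma-polgrupy}\ref{item:grube} together with \( \mess(v, V) = p+q \). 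For \( k = n \), every \( (x, v) \in \Omega \) satisfies \( x + nsv = x + v/a \geq 1 \) (since \( v \geq a \)), so only the second case applies and \( T^{*}(a^{-1})\ind_{\Omega} \leq p+q \) almost everywhere, as required.
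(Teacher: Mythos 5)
Your argument is correct and both halves take a route genuinely different from the paper's.

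For the lower bounds, you work on the primal side by producing nonnegative test functions supported in a region that only translates under $T(t)$, while the paper proceeds on the dual side: Lemma~\ref{lem:od-dolu} shows $\norm{T^{*}(t)}_{\lino\linf} \geq \esssup_{(x,v)}[x_{vvt}^{*}<0]$, and the theorem's proof then checks that the set $\{x_{vvt}^{*}<0\}$ has positive measure when $a=0$ or $t<a^{-1}$. The two are equivalent in spirit; yours is arguably more concrete. One small caveat: for your test functions the computation gives $\norm{T(t)f}_{\lo} \geq \norm{f}_{\lo}$ directly (the shifted window may also pick up extension mass on $y<0$), and equality then follows from the contractivity $\norm{T(t)}_{\lino\lo}\leq 1$ of Theorem~\ref{thm:norma-polgrupy}~\ref{item:grube}; so the phrase ``mass conservation'' is slightly too strong, but the conclusion is unaffected.

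For the decay estimate your approach is considerably lighter than the paper's. The paper establishes the refined pointwise bound $T^{*}(t)\ind_{\Omega}(x,v)\leq (p+q)^{j(x,v,t)}$ of Lemma~\ref{lem:oszacowanie-t}, whose proof involves a delicate induction with the auxiliary families $\phi_{i,t}$, $\psi_{i,t,w}$, $\Psi_{i,t,w}$ and inequality~\eqref{eq:psi-sum}, and only afterwards passes to the uniform bound by showing $j\geq\floor{ta}$. You bypass all of this: fixing $n\geq b/a$ and $s=1/(na)\leq b^{-1}$, iterating the small-time adjoint formula~\eqref{eq:wzor-na-dualny} $n$ times gives the uniform estimate $T^{*}(ks)\ind_{\Omega}(x,v)\leq [x+ksv<1]+(p+q)[x+ksv\geq 1]$ for $0\leq k\leq n$, and at $k=n$ the condition $v>a$ forces $x+v/a>1$ so the first bracket vanishes identically; combined with~\eqref{eq:norma-inf} this yields $\norm{T(a^{-1})}_{\lino\lo}\leq p+q$, and the semigroup property does the rest. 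The only point to be precise about in your inductive step is the bound on the integral term: rather than inserting $\phi\leq 1$ pointwise inside the integral (which raises a minor null-set quibble because of the Dirac component of $\mess(v,\cdot)$), it is cleanest to invoke positivity of $T^{*}(s)$ together with $0\leq\phi\leq\ind_{\Omega}$ to get $T^{*}(s)\phi\leq T^{*}(s)\ind_{\Omega}$, and then use~\eqref{eq:dualny-na-1} to read off that $T^{*}(s)\ind_{\Omega}=p+q$ on $\{x+sv\geq 1\}$. With that phrasing the proof is airtight. Your route trades the paper's sharper pointwise information in Lemma~\ref{lem:oszacowanie-t} (which is not needed for this theorem) for a much shorter argument; it is a genuine simplification.
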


Before we prove Theorem~\ref{thm:norma-polgrupy-leb} we state a~couple of
auxiliary results.

\begin{lemma}
\label{lem:od-dolu}
Under the hypotheses of Theorem~\textup{\ref{thm:norma-polgrupy-leb}} we have
\begin{equation}
\label{eq:norma-T-od-dolu}
\norm{ T^{*}(t) }_{\lino\linf} \geq \esssup_{(x,v) \in \Omega} [x_{vvt}^{*} <
0], \qquad t > 0.
\end{equation}
\end{lemma}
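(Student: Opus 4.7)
The plan is to prove the pointwise lower bound $T^{*}(t) \ind_{\Omega}(x,v) \geq [x_{vvt}^{*} < 0]$ almost everywhere on $\Omega$; since $\norm{\ind_{\Omega}}_{\linf} = 1$, this immediately yields
\[
\norm{T^{*}(t)}_{\lino\linf} \geq \norm{T^{*}(t) \ind_{\Omega}}_{\linf} \geq \esssup_{(x,v) \in \Omega} [x_{vvt}^{*} < 0].
\]

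The starting point is a pointwise lower bound on $T(t)$ itself. For any nonnegative $f \in \lo$, formula~\eqref{eq:ext-wzor} yields the trivial estimate $\tif(y,w) \geq f(y,w)[y > 0]$, since the integral term on $\set{y \leq 0}$ is nonnegative. Combined with~\eqref{eq:wzor-na-polgrupe}, this gives, almost everywhere on $\Omega$,
\[
T(t) f(x,v) = \tif(x - tv, v) \geq f(x - tv, v)[x > tv],
\]
meaning that $T(t)$ pointwise dominates the absorbing-at-$x=0$ transport semigroup. Dualizing and using positivity, for any nonnegative $\phi \in \linf$ and nonnegative $f \in \lo$,
\[
\int_{\Omega} f \, T^{*}(t) \phi \dmu = \int_{\Omega} \phi \, T(t) f \dmu \geq \int_{\Omega} \phi(x, v) f(x - tv, v)[x > tv] \dmu(x,v),
\]
and the substitution $y = x - tv$ (for each fixed $v \in V$) rewrites the last integral as $\int_{\Omega} \phi(y + tv, v) [y + tv < 1] f(y, v) \dmu(y, v)$. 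Since $f \geq 0$ is otherwise arbitrary, we obtain the a.e.\ pointwise bound $T^{*}(t) \phi(y, v) \geq \phi(y + tv, v) [y + tv < 1]$ on $\Omega$.

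Applying this with $\phi = \ind_{\Omega}$ and noting that $(y + tv, v) \in \Omega$ whenever $(y,v) \in \Omega$ and $y + tv < 1$, we conclude $T^{*}(t) \ind_{\Omega}(y, v) \geq [y + tv < 1] = [y_{vvt}^{*} < 0]$ almost everywhere, which is exactly what is required for~\eqref{eq:norma-T-od-dolu}. I do not foresee a serious obstacle: the proof rests only on the positivity of $T(t)$ and the explicit form of the boundary extension, with routine care needed in the change of variables and in passing from an integrated inequality to a pointwise one via arbitrariness of $f \geq 0$. The specific hypothesis that $V = (a,b)$ carries the Lebesgue measure is not used in this lemma; it enters only downstream in Theorem~\ref{thm:norma-polgrupy-leb} to identify when the right-hand side of~\eqref{eq:norma-T-od-dolu} is actually equal to $1$.
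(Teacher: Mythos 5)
Your proof is correct, and it takes a genuinely different route from the paper's. The paper first establishes the pointwise bound \( T^{*}(t_1)\ind_{\Omega}(x,v) \geq [x_{vvt_1}^{*} < 0] \) for small times \( t_1 \in [0,b^{-1}] \) by reading it off from the explicit dual formula~\eqref{eq:dualny-na-1}, and then propagates the bound to arbitrary \( t > 0 \) by induction over increments of length at most \( b^{-1} \), using the semigroup property, positivity of \( T^{*} \), and the explicit formula~\eqref{eq:wzor-na-dualny} at each step (see~\eqref{eq:indukcja-od-dolu}). You bypass the iteration entirely: the trivial pointwise estimate \( \tif(y,w) \geq f(y,w)[y>0] \) extracted from~\eqref{eq:ext-wzor} together with the global formula~\eqref{eq:wzor-na-polgrupe} gives the one-shot lower bound \( T(t)f(x,v) \geq f(x-tv,v)[x>tv] \) simultaneously for all \( t \geq 0 \), and a change of variables plus duality then delivers the desired bound on \( T^{*}(t)\ind_{\Omega} \). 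Both arguments ultimately rest on positivity of the boundary extension, but yours is shorter, avoids the small-time expansion of the adjoint (Lemma~\ref{lem:dual}) altogether, and makes visibly clear that the lemma holds without any structural assumption on \( (V,\nu) \); your remark to that effect is also consistent with the paper's proof, which does not use the Lebesgue-measure hypothesis either (that hypothesis only enters downstream, in the evaluation of the essential supremum on the right-hand side of~\eqref{eq:norma-T-od-dolu} inside the proof of Theorem~\ref{thm:norma-polgrupy-leb}).
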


\begin{proof}
For each \( t_1 \in [0,b^{-1}] \) by~\eqref{eq:dualny-na-1} we have
\[
T^{*}(t_1) \ind_{\Omega}(x,v) = [x_{vvt_1}^{*} < 0] + (p+q) [0 \leq
x_{vvt_1}^{*}] \geq [x_{vvt_1}^{*} < 0]
\]
for almost every \( (x,v) \in \Omega \).
Therefore, given \( t_2 \in [0,b^{-1}] \), by the semigroup property for
\( T^{*} \), the positivity of \( T^{*}(t_2) \), and~\eqref{eq:wzor-na-dualny},
\begin{equation} 
\label{eq:indukcja-od-dolu}
\begin{split}
T^{*}(t_1 + t_2) \ind_{\Omega} (x,v) &\geq T^{*}(t_2)[x_{vvt_1}^{*} < 0]\\
&\geq\lrb[\big]{ (x+t_2v)_{vvt_1}^{*} < 0}[x_{vvt_2}^{*} < 0]\\
&= [x_{vv(t_1+t_2)}^{*} < 0]
\end{split}
\end{equation}
for almost every \( (x,v) \in \Omega \).

For \( t > 0 \) choose \( N \geq 0 \), \( s \in (0,b^{-1}] \) such that
\begin{equation*}
t = Nb^{-1} + s.
\end{equation*}
Inequality~\eqref{eq:indukcja-od-dolu} implies
\[
T^{*}(t) \ind_{\Omega}(x,v) = T(Nb^{-1}+s) \ind_{\Omega}(x,v) \geq [x_{vvt}^{*}
< 0]
\]
for almost every \( (x,v) \in \Omega \) by induction.
Hence~\eqref{eq:norma-T-od-dolu} follows by~\eqref{eq:norma-inf}.
\end{proof}

\begin{lemma}
\label{lem:oszacowanie-t}
Under the hypotheses of Theorem~\textup{\ref{thm:norma-polgrupy-leb}}, if
\( a > 0 \), then
\begin{equation}
\label{eq:t-estimate}
T^{*}(t) \ind_{\Omega}(x,v) \leq (p+q)^j, \qquad t > 0
\end{equation}
for almost every \( (x,v) \in \Omega \), where
\begin{equation}
\label{eq:j}
j = j(x,v,t) \coloneqq \min \lrc{ i \geq 0\colon x_{vv(t-ia^{-1})}^{*} < 0}.
\end{equation}
\end{lemma}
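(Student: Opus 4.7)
The plan is to prove~\eqref{eq:t-estimate} by induction on
$n \coloneqq \lceil t b \rceil$, combining the semigroup property of
$\sst^{*}$ with the explicit formula from Lemma~\ref{lem:dual} on time
steps of length at most $b^{-1}$. Write
$\phi_{\tau} \coloneqq T^{*}(\tau) \ind_{\Omega}$. The base case $n = 0$
corresponds to $t = 0$, where $\phi_{0} = \ind_{\Omega}$ and
$j(x, v, 0) = 0$, so both sides of~\eqref{eq:t-estimate} equal $1$.

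For the inductive step, fix $t \in (n b^{-1}, (n+1) b^{-1}]$ and decompose
$t = s + t'$ with $s \coloneqq t - n b^{-1} \in (0, b^{-1}]$ and
$t' \coloneqq n b^{-1}$. Applying Lemma~\ref{lem:dual} to $\phi_{t'}$, the
semigroup identity $\phi_{t} = T^{*}(s) \phi_{t'}$ yields
\[
\phi_{t}(x, v) = \phi_{t'}(x + sv, v)\, [x + sv < 1] + \int_{V}
\phi_{t'}(x_{wvs}^{*}, w)\, \mess(v, \dd w)\, [x + sv \geq 1].
\]
In the case $x + sv < 1$, the identity
$x + (t - i a^{-1}) v = (x + sv) + (t' - i a^{-1}) v$ implies
$j(x + sv, v, t') = j(x, v, t)$, so the inductive hypothesis applied at
$(x + sv, v, t')$ closes this case.

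In the case $x + sv \geq 1$, the crucial step is the inequality
\[
j(x_{wvs}^{*}, w, t') \geq j(x, v, t) - 1, \qquad w \in V.
\]
Granting this, the inductive hypothesis combined with $p + q < 1$ gives
$\phi_{t'}(x_{wvs}^{*}, w) \leq (p + q)^{j(x, v, t) - 1}$, and integrating
against $\mess(v, \cdot)$, whose total mass equals $p + q$
by~\eqref{eq:rozklad-k}, produces the desired bound $(p + q)^{j(x, v, t)}$.
The $j$-comparison itself rests on the algebraic identity
\[
x_{wvs}^{*} + (t' - i a^{-1}) w = \frac{w}{v} \bigl( x + (t - i a^{-1}) v - 1 \bigr),
\]
which converts $x_{wvs}^{*} + (t' - i a^{-1}) w \geq 1$ into
$x + (t - i a^{-1}) v \geq 1 + v/w$; for $0 \leq i \leq j(x, v, t) - 2$,
this follows from $x + (t - (j - 1) a^{-1}) v \geq 1$ (the defining
property of $j$) together with the strict inequality $w > a$, which yields
$w^{-1} < a^{-1} \leq (j(x, v, t) - 1 - i) a^{-1}$.

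I expect the $j$-comparison to be the main technical point; the remaining
steps follow directly from the semigroup property, the explicit formula of
Lemma~\ref{lem:dual}, and the inductive hypothesis.
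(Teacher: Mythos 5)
Your proof is correct and takes a genuinely different, and in fact cleaner, route than the paper. The paper first recasts the scalar bound $(p+q)^{j(x,v,t)}$ as a series $\sum_{i \geq 0} (p+q)^i \phi_{i,t}$ of indicator functions (Step~1), develops an auxiliary shift inequality (eq:psi-sum) for these indicators (Step~2), works out a formula for $T^*(b^{-1})\phi_{i,s}$ (Step~3), and only then performs the induction (Step~4) with the decomposition $t = b^{-1}+s$, $s$ ranging over the \emph{whole} previous interval. You keep the bound in scalar form, induct on $\lceil tb\rceil$ with the decomposition $t = s + nb^{-1}$ using the inductive hypothesis only at the endpoint $t' = nb^{-1}$, and compress all the combinatorics into the single comparison $j(x_{wvs}^{*}, w, t') \geq j(x,v,t) - 1$, which you verify by the same algebraic identity and the same use of $a < w$ that the paper uses implicitly in its Step~2. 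This buys you a shorter and more transparent argument; what you lose is a small amount of bookkeeping rigor that the series formulation handles automatically. Specifically, you should phrase the application of the inductive hypothesis as applying the positive operator $T^*(s)$ to the $L^{\infty}$-inequality $\phi_{t'} \leq g_{t'}$, where $g_{\tau}(y,u) \coloneqq (p+q)^{j(y,u,\tau)}$, and then compare $T^*(s)g_{t'}$ with $g_t$ pointwise via your two cases, rather than literally evaluating the a.e.-defined $\phi_{t'}$ along the curves $w\mapsto(x_{wvs}^{*},w)$; you should also record explicitly that in the case $x+sv\geq 1$ one automatically has $j(x,v,t)\geq 1$, so the exponent $j(x,v,t)-1$ is nonnegative. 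Both are routine, and the substance of your argument is sound.
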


Note that for \( (x,v) \in \Omega \), in view of the interpretation of
\( x_{vv(t-ia^{-1})}^{*} \) from the beginning of this section, \( j \) is the
least nonnegative integer such that a~cell characterized by a~pair \( (x,v) \)
at time \( 0 \) will not divide before time \( \max(t - ja^{-1},0) \).

\begin{proof}
For \( i \geq 0 \) and \( t > 0 \) define \( \phi_{i,t} \colon \Omega \to \R \)
by
\[
\phi_{i,t}(x,v) \coloneqq \lrb{ (i-1)va^{-1} \leq x_{vvt}^{*} < iva^{-1} },
\qquad (x,v) \in \Omega,
\]
and \( \psi_{i,t,w}\colon \Omega \to \R \) for \( w \in V \) by
\[
\psi_{i,t,w}(x,v) \coloneqq \lrb{ (i-1)va^{-1} + vw^{-1} \leq x_{vvt}^{*} <
  iva^{-1} + vw^{-1} }, \qquad (x,v) \in \Omega.
\]
Also, denote
\[
r \coloneqq p + q.
\]

\emph{Step} 1.
Inequality~\eqref{eq:t-estimate} is equivalent to
\begin{equation}
\label{eq:indukcja-zalozenie}
T^{*} \lrp{ t } \ind_{\Omega} \leq \sum_{i=0}^{+\infty} r^i
\phi_{i,t}, \qquad t > 0.
\end{equation}
Indeed, let \( t > 0 \).
The functions \( \phi_{i,t} \) for \( i \geq 0 \) are indicator functions of
disjoint sets whose union equals \( \Omega \), since
\begin{equation}
\label{eq:i0}
x_{vvt}^{*} = x + tv - 1 > -1 > -va^{-1}, \qquad (x,v) \in \Omega.
\end{equation}
Hence, for fixed \( (x,v) \in \Omega \), exactly one term in the series is
nonzero at \( (x,v) \), that is, there exists a~unique \( m \geq 0 \) such that
\( \phi_{m,t}(x,v) = 1 \).
For \( i \geq 0 \) we have
\begin{equation}
\label{eq:phi-inna-def}
\phi_{i,t}(x,v) = [-va^{-1} \leq x_{vvt}^{*} - iva^{-1} < 0] = [-va^{-1} \leq
x_{vv(t-ia^{-1})}^{*} < 0],
\end{equation}
thus taking \( i = m \) we see that \( j \leq m \), where \( j \) is defined
by~\eqref{eq:j}.
In the case \( m = 0 \), clearly \( j = m \).
On the other hand, if \( m \geq 1 \), then \( \phi_{i,t}(x,v) = 0 \) for
\( 0 \leq i < m \), which by~\eqref{eq:phi-inna-def} implies \( j \geq m \).
Hence \( j = m \), and finally
\[
\sum_{i=0}^{+\infty} r^i \phi_{i,t}(x,v) = r^m = r^j
\]
as desired.

\emph{Step} 2.
Let \( t > 0 \) and \( w \in V \).
We estimate the sum \( \sum_{i=0}^{+\infty} r^i \psi_{i,t,w}(x,v) \) under
proviso \( x_{vvt}^{*} \geq 0 \).
To this end we fix \( (x,v) \in \Omega \) such that \( x_{vvt}^{*} \geq 0 \), and
for \( i \geq 0 \) we define
\[
\Psi_{i,t,w}(x,v) \coloneqq [iva^{-1} \leq x_{vvt}^{*} < iva^{-1} + vw^{-1}].
\]
Assuming that real numbers \( \alpha \), \( \beta \), \( \gamma \) and
\( \delta \) satisfy
\begin{equation}
\label{eq:alpha-beta}
\alpha \leq \beta \leq \gamma \leq \delta,
\end{equation}
we have
\begin{equation}
\label{eq:iv-ineq}
[\alpha \leq y < \beta] + r [\beta \leq y < \delta] \leq [\alpha \leq y <
\gamma] + r [\gamma \leq y < \delta], \qquad y \in \R,
\end{equation}
since \( r < 1 \).
Taking
\[
\alpha \coloneqq iva^{-1}, \quad \beta \coloneqq iva^{-1} + vw^{-1}, \quad
\gamma \coloneqq iva^{-1} + va^{-1}, \quad \delta \coloneqq (i+1)va^{-1} +
vw^{-1},
\]
condition \eqref{eq:alpha-beta} holds for \( i \geq 0 \) because
\( 0 < w^{-1} < a^{-1} \).
Hence, by~\eqref{eq:iv-ineq} with \( y \coloneqq x_{vvt}^{*} \) we obtain
\begin{equation*}
r^i \Psi_{i,t,w}(x,v) + r^{i+1} \psi_{i+1,t,w}(x,v) \leq r^i \phi_{i+1,t}(x,v) +
r^{i+1} \Psi_{i+1,t,w}(x,v), \qquad i \geq 0.
\end{equation*}
Summing this inequality for \( i \geq 0 \) we get
\begin{equation*}
\sum_{i=0}^{+\infty} r^i \Psi_{i,t,w}(x,v) + \sum_{i = 1}^{+\infty} r^i
\psi_{i,t,w}(x,v) \leq \sum_{i = 0}^{+\infty} r^i \phi_{i+1,t}(x,v) + \sum_{i =
  1}^{+\infty} r^i \Psi_{i,t,w}(x,v);
\end{equation*}
note that here all sums are finite since \( \phi_{i,t} \)'s,
\( \psi_{i,t,w} \)'s and \( \Psi_{i,t,w} \)'s are indicator functions of
disjoint sets.
Thus
\begin{equation*}
\Psi_{0,t,w}(x,v) + \sum_{i=1}^{+\infty} r^i
\psi_{i,t,w}(x,v) \leq \sum_{i = 0}^{+\infty} r^i \phi_{i+1,t}(x,v).
\end{equation*}
Recall that \( x_{vvt}^{*} \geq 0 \), therefore
\begin{equation*}
\psi_{0,t,w}(x,v) = [-va^{-1} + vw^{-1} \leq x_{vvt}^{*} < 0] + [0 \leq
x_{vvt}^{*} < vw^{-1}] = \Psi_{0,t,w}(x,v).
\end{equation*}
Hence finally
\begin{equation}
\label{eq:psi-sum}
\sum_{i=0}^{+\infty} r^i \psi_{i,t,w}(x,v) \leq \sum_{i=0}^{+\infty}
r^i \phi_{i+1,t}(x,v)
\end{equation}
provided that \( t > 0 \), \( w \in V \), and \( (x,v) \in \Omega \) satisfies
\( x_{vvt}^{*} \geq 0 \).

\emph{Step} 3.
Let \( s > 0 \) and set
\[
t \coloneqq b^{-1} + s.
\]
We find a~formula for \( T^{*}(b^{-1}) \phi_{i,s} \).
For \( (x,v) \in \Omega \) we have \( (x+vb^{-1})_{vvs}^{*} = x_{vvt}^{*} \) and
\[
(x_{wvb^{-1}}^{*})_{wws}^{*} = (x+vb^{-1}-1)wv^{-1} + sw - 1 = x_{vvt}^{*}
wv^{-1} - 1.
\]
Therefore,
\begin{equation*}
\phi_{i,s}(x+vb^{-1},v) = \phi_{i,t}(x,v), \qquad i \geq 0,
\end{equation*}
and
\begin{equation*}
\phi_{i,s}(x_{wvb^{-1}}^{*},w) = \psi_{i,t,w}(x,v), \qquad i \geq 0,\ w \in V.
\end{equation*}
Combining these relations with~\eqref{eq:wzor-na-dualny}, we have
\begin{equation}
\label{eq:t-na-phi0}
T^{*}(b^{-1}) \phi_{i,s}(x,v) = \phi_{i,t}(x,v) [x_{vvb^{-1}}^{*} < 0]
+ \int_V \psi_{i,t,w}(x,v) \mess(v,\dd w) [0 \leq x_{vvb^{-1}}^{*}]
\end{equation}
for \( i \geq 0 \) and almost every \( (x,v) \in \Omega \).

\emph{Step} 4.
We show that~\eqref{eq:indukcja-zalozenie} holds for every
\( t \in \bigl( (n-1)b^{-1}, nb^{-1} \bigr] \), \( n \geq 1 \) by induction on
\( n \).
For \( t \in (0,b^{-1}] \) we have
\[
x_{vvt}^{*} = x + tv - 1 < 1 + vb^{-1} - 1 = vb^{-1} < va^{-1}, \qquad (x,v) \in
\Omega,
\]
thus \( [0 \leq x_{vvt}^{*}] = [0 \leq x_{vvt}^{*} < va^{-1}] = \phi_{1,t}(x,v)
\).
Similarly, by~\eqref{eq:i0},
\[
[x_{vvt}^{*} < 0] = \phi_{0,t}(x,v), \qquad (x,v) \in \Omega.
\]
Hence, by~\eqref{eq:dualny-na-1},
\begin{equation*}
T^{*} \lrp{ t } \ind_{\Omega} = \phi_{0,t} + r \phi_{1,t} = \sum_{i=0}^{+\infty}
r^i \phi_{i,t}.
\end{equation*}
This shows that~\eqref{eq:indukcja-zalozenie} holds for \( t \in (0,b^{-1}] \).

In order to perform the inductive step let \( n \geq 1 \) and assume
that~\eqref{eq:indukcja-zalozenie} holds for each
\( t \in \bigl((n-1)b^{-1},nb^{-1}\bigr] \).
Fix \( t \in \bigl(nb^{-1},(n+1)b^{-1}\bigr] \), and choose
\( s \in \bigl((n-1)b^{-1},nb^{-1}\bigr] \) such that
\[
t = b^{-1} + s.
\]
Since \( r < 1 \) we have \( \sum_{i=0}^{+\infty} r^i \phi_{i,t}(x,v) \leq 1 \)
for \( (x,v) \in \Omega \) (recall that exactly one term of the series is
nonzero).
Therefore \( \sum_{i=0}^{+\infty} r^i \phi_{i,t} \) is an element of
\( \linf \).
Hence, by~\eqref{eq:indukcja-zalozenie} with \( t \) replaced by \( s \), and by
the fact that \( T^{*}(b^{-1}) \) is a~positive and bounded operator,
\begin{equation*}
T^{*}(t) \ind_{\Omega} = T^{*}(b^{-1}) T^{*}(s) \ind_{\Omega} \leq
\sum_{i=0}^{+\infty} r^i T^{*}(b^{-1}) \phi_{i,s}.
\end{equation*}
Combining this with~\eqref{eq:t-na-phi0}, we obtain
\begin{equation}
\label{eq:ts-leq}
\begin{split}
T^{*}(t) \ind_{\Omega}(x,v) &\leq [x_{vvb^{-1}}^{*} < 0] \sum_{i=0}^{+\infty}
r^i \phi_{i,t}(x,v)\\
&\pe + [0 \leq x_{vvb^{-1}}^{*}] \int_V \sum_{i=0}^{+\infty} r^i
\psi_{i,t,w}(x,v) \mess(v,\dd w)
\end{split}
\end{equation}
for almost every \( (x,v) \in \Omega \).

Denote
\[
\chi(x,v) \coloneqq [x_{vvb^{-1}}^{*} < 0], \qquad (x,v) \in \Omega.
\]
For \( (x,v) \in \Omega \), if \( x_{vvb^{-1}}^{*} \geq 0 \), or equivalently
\( 1-\chi(x,v) = 1 \), then \( x_{vvt}^{*} = x_{vvb^{-1}}^{*} + sv \geq 0 \).
Hence, by~\eqref{eq:ts-leq} and \eqref{eq:psi-sum}, we obtain
\begin{equation*}
\begin{split}
T^{*}(t) \ind_{\Omega} &\leq \chi \sum_{i=0}^{+\infty} r^i \phi_{i,t} + (1-\chi)
\int_V \mess(v,\dd w) \sum_{i=0}^{+\infty} r^i
\phi_{i+1,t}\\
&= \chi \sum_{i=0}^{+\infty} r^i \phi_{i,t} + (1-\chi) \sum_{i=0}^{+\infty}
r^{i+1} \phi_{i+1,t}\\
&= \chi \phi_{0,t} + \sum_{i=1}^{+\infty} r^i \phi_{i,t},
\end{split}
\end{equation*}
where in the first equality we used \( \int_V \mess(v,\dd w) = r \) for
\( v \in V \).
However, we see that \( \chi \phi_{0,t} = \phi_{0,t} \), since
\( x_{vvb^{-1}}^{*} \leq x_{vvt}^{*} \) for each \( (x,v) \in \Omega \).
Therefore~\eqref{eq:indukcja-zalozenie} follows, and the proof is complete.
\end{proof}

\begin{proof}[Proof of Theorem~\textup{\ref{thm:norma-polgrupy-leb}}]
For part~\ref{item:3} and~\eqref{eq:tlea} we argue as follows.
Fix \( t > 0 \) and suppose that \( a = 0 \) or \( 0 < a < t^{-1} \).
Then the set \( \lrc{ (x,v) \in \Omega\colon x_{vvt}^{*} < 0 } \) is of positive
Lebesgue measure.
Indeed, the set \( U \coloneqq V \cap (a,t^{-1}) \) is an open interval, and for
\( v \in U \) the Lebesgue measure of the set of \( x \in I \) satisfying
\( x + tv < 1 \) is positive, being equal \( 1 - tv > 0 \).
Combining this with Lemma~\ref{lem:od-dolu}, we have
\( \norm{ T^{*}(t) }_{\lino\linf} \geq 1 \).
Hence, by~\eqref{eq:rowne-normy},
\[
\norm{ T(t) }_{\lino\lo} \geq 1.
\]
However, we know from Theorem~\ref{thm:norma-polgrupy}~\ref{item:grube} that
\( \norm{ T(t) }_{\lino\lo} \leq 1 \).
This means that
\[
\norm{ T(t) }_{\lino\lo} = 1
\]
provided that \( a = 0 \) or \( 0 < a < t^{-1} \), which proves~\ref{item:3}
and~\eqref{eq:tlea}.

In order to prove~\eqref{eq:oszacowanie-t}, for \( t \geq a^{-1} \) let \( m \)
be the unique positive integer satisfying
\[
ma^{-1} \leq t < (m+1)a^{-1},
\]
and let \( (x,v) \in \Omega \).
Then
\[
x_{vvt}^{*} = x + tv - 1 \geq x + mva^{-1} - 1 > mva^{-1} - 1 > (m-1)va^{-1},
\]
since \( va^{-1} > 1 \).
This implies that
\[
x_{vv(t-ia^{-1})}^{*} = x_{vvt}^{*} - iva^{-1} > 0, \qquad 0 \leq i \leq m-1,
\]
hence \( j(x,v,t) = \min \lrc{ i \geq 0\colon x_{vv(t-ia^{-1})}^{*} < 0 } \geq m
\).
Thus, since \( p + q < 1 \), Lemma~\ref{lem:oszacowanie-t} implies
\[
\norm{T^{*}(t) \ind_{\Omega}}_{\linf} \leq (p+q)^m = (p+q)^{\floor{ta}}
\]
and the proof is complete by~\eqref{eq:norma-inf}.
\end{proof}

A~natural question arises whether the estimates in
Theorem~\ref{thm:norma-polgrupy}~\ref{item:6} and
Theorem~\ref{thm:norma-polgrupy-leb}~\ref{item:4} are optimal.
As we shall see, the answer is generally in negative; the growth (resp.
decay) is in fact slower (resp.
faster) than Theorem~\ref{thm:norma-polgrupy}~\ref{item:6} and
Theorem~\ref{thm:norma-polgrupy-leb}~\ref{item:4} may suggest.
Unfortunately, the exact growth and decay rates seem to depend in crucial way on
an interplay of parameters \( a, b \) and kernel \( k \), and thus an explicit
formula, if it exists, evades us.
We can show, however, that equality in~\eqref{eq:upper-bound} holds rather
seldom (a similar argument applies to~\eqref{eq:oszacowanie-t}).
For the sake of this argument, we restrict ourselves to the case \( V = (a,b) \)
with the Lebesgue measure and assume that
\[
r \coloneqq p + q > 1.
\]

We begin by finding necessary and sufficient conditions for
\begin{equation}
\label{eq:Tr2}
\norm{ T(2b^{-1}) }_{\lino\lo} = r^2
\end{equation}
to hold.
To simplify notation we let \( s \coloneqq b^{-1} \) and \( t \coloneqq 2s \).
Fix \( (x,v) \in \Omega \), \( w \in V \).
Note that \( x_{vvs}^{*} = x_{vvt}^{*} - sv \),
\( (x+sv)_{vvs}^{*} = x_{vvt}^{*} \) and
\[
(x_{wvs}^{*})_{wws}^{*} = (x+sv-1)wv^{-1} + sw - 1 = (x+2sv-1)wv^{-1} - 1 =
x_{vvt}^{*}wv^{-1} - 1.
\]
Therefore 
\begin{align*}
  \lrb[\big]{(x+sv)_{vvs}^{*} < 0} [x_{vvs}^{*} < 0]
  &= [x_{vvt}^{*} < 0],\\
  \lrb[\big]{0 \leq (x+sv)_{vvs}^{*}} [x_{vvs}^{*} < 0]
  &= [0 \leq x_{vvt}^{*} < sv],\\
  \lrb[\big]{(x_{wvs}^{*})_{wws}^{*} < 0} [0 \leq x_{vvs}^{*}]
  &= [sv \leq x_{vvt}^{*} < vw^{-1}],\\
  \lrb[\big]{0 \leq (x_{wvs}^{*})_{wws}^{*}} [0 \leq x_{vvs}^{*}]
  &= [vw^{-1} \leq x_{vvt}^{*}].
\end{align*}
Hence, using the semigroup property for \( T^{*}(t) = T^{*}(s + s) \),
by~\eqref{eq:wzor-na-dualny} and \eqref{eq:dualny-na-1} we have
\begin{equation}
\label{eq:t2}
\begin{split}
T^{*}(t) \ind_{\Omega}(x,v)
&= [x_{vvt}^{*} < 0] + r [0 \leq x_{vvt}^{*} < sv]\\
&\pe + p \int_V \tms(w,v) [sv \leq x_{vvt}^{*} < vw^{-1}] \dd w + q [sv \leq
x_{vvt}^{*} < 1]\\
&\pe + pr \int_V \tms(w,v) [vw^{-1} \leq x_{vvt}^{*} ] \dd w + qr [1 \leq
x_{vvt}^{*}]
\end{split}
\end{equation}
for almost every \( (x,v) \in \Omega \).

Let \( v \in V \) and denote
\[
I_v \coloneqq \set{x \in I\colon x_{vvt}^{*} \geq sv} = \set{x \in I\colon x
  \geq 1 - sv}.
\]
If \( x \in I_v \) and \( y \in I \setminus I_v \), then by~\eqref{eq:t2} we
have
\[
T^{*}(t) \ind_{\Omega}(y,v) \leq r \leq T^{*}(t) \ind_{\Omega}(x,v).
\]
Hence, since the Lebesgue measure of \( I_v \) is positive, being equal
\( sv \),
\begin{equation}
\label{eq:Tstar-norm}
\norm{ T^{*}(t) }_{\lino\linf} = \esssup_{v \in V,\ x \in I_v} T^{*}(t)
\ind_{\Omega}(x,v).
\end{equation}
Define
\[
c(x,v) \coloneqq \max \lrp[\Big]{ a, \frac{v}{x_{vvt}^{*}} }, \qquad v \in V,\ x
\in I_v;
\]
note that \( a \leq c(x,v) \leq b \).
Then, using~\eqref{eq:t2}, for almost every \( (x,v) \in \Omega \) such that
\( x \in I_v \) we obtain
\begin{equation}
\label{eq:TCC}
T^{*}(t) \ind_{\Omega}(x,v) = C(x,v) + D(x,v),
\end{equation}
where
\[
C(x,v) \coloneqq p \int_a^{c(x,v)} \tms(w,v) \dd w + pr \int_{c(x,v)}^b \tms(w,v)
\dd w
\]
and
\[
D(x,v) \coloneqq q [sv \leq x_{vvt}^{*} < 1] + qr [1 \leq x_{vvt}^{*}].
\]
Let
\[
d \coloneqq \max\lrp{a,t^{-1}} = \max(a, b/2),
\]
and observe that \( a \leq d < b \).
For \( v \in V \) the set
\[
\set{x \in I\colon x_{vvt}^{*} \geq 1} = \set{x \in I\colon x \geq 2 - tv}
\]
is of positive Lebesgue measure if and only if \( tv > 1 \), or equivalently
\( v \in (d,b) \).
Consequently, if \( p = 0 \) then~\eqref{eq:Tr2} holds by~\eqref{eq:Tstar-norm}
and~\eqref{eq:TCC}.
Suppose now that \( p > 0 \) and let \( v \in V \).
As a~function of \( x \in I_v = [1-sv,1) \), \( x \mapsto x_{vvt}^{*} \) is
increasing and converges to \( tv \) as \( x \to 1^{-} \).
This implies that \( I_v \ni x \mapsto c(x,v) \) is decreasing and converges to
\( d \) as \( x \to 1^{-} \).
Hence
\[
\esssup_{v \in V\, x \in I_v} C(x,v) = pr
\]
if and only if the following condition holds:
\begin{enumeratev}
  \item[\namedlabel{i:ve}{(V\(_{\epsilon}\))}] For each \( \epsilon > 0 \) there
exists a~Lebesgue measurable subset \( U \) of \( V \) with a~positive measure
such that
\begin{equation*}
\int_d^b \tms(w,v) \dd w > 1-\epsilon, \qquad v \in U.
\end{equation*}
\end{enumeratev}
Therefore, again by~\eqref{eq:Tstar-norm} and~\eqref{eq:TCC}, if \( q = 0 \),
then~\eqref{eq:Tr2} is equivalent to~\ref{i:ve}.
Finally, if \( p,q > 0 \), then~\eqref{eq:Tr2} holds if and only if~\ref{i:ve}
holds with the additional requirement \( U \subseteq (d,b) \).

Note that by the definition of \( \tms \) and~\eqref{eq:rozklad-k}
condition~\ref{i:ve} is trivially satisfied (for any \( U \subseteq V \) with
positive Lebesgue measure) provided that \( d = a \), which is equivalent to
\( b/2 \leq a \).

Continuing the procedure described above for \( t \coloneqq nb^{-1} \),
\( n \geq 2 \) we can generalize this result.
To this end, we fix \( n \geq 2 \) and introduce condition:
\begin{enumeratev}
  \item[\namedlabel{i:ven}{(V\(_{\epsilon}^n \))}] For each \( \epsilon > 0 \)
there exists a~Lebesgue measurable subset \( U \) of \( V \) with a~positive
measure such that
\begin{equation*}
\int_{d_n}^b \tms(w,v) \dd w > 1-\epsilon, \qquad v \in U,
\end{equation*}
where
\[
d_n \coloneqq \max \lrp[\Big]{ a, \frac{nb}{n+1} }, \qquad n \geq 2.
\]
\end{enumeratev}
Then we can check that that the following result holds.

\begin{proposition}
\label{prop:kiedy-max}
Assume that \( p + q > 1 \), and \( V = (a,b) \) with the Lebesgue measure.
For \( n \geq 2 \) we have
\[
\norm{ T(nb^{-1}) }_{\lino\lo} = (p+q)^n
\]
if and only if
\begin{enumerate}
  \item \( p = 0 \), or
  \item \( q = 0 \) and condition~\textup{\ref{i:ven}} holds, or
  \item\label{i:venplus} condition~\textup{\ref{i:ven}} holds with additional requirement
\( U \subseteq (d_n,b) \).
\end{enumerate}
\end{proposition}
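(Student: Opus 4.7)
The plan is to adapt the argument just completed for \( n = 2 \) to general \( n \geq 2 \) by induction on \( n \), with the base case being precisely the analysis preceding the proposition. For the inductive step the natural tool is the semigroup identity \( T^{*}(ns) = T^{*}(s)\,T^{*}((n-1)s) \) combined with Lemma~\ref{lem:dual}, which lets one write \( T^{*}(ns)\ind_{\Omega} \) recursively from \( T^{*}((n-1)s)\ind_{\Omega} \). Each application of \( T^{*}(s) \) contributes either a \emph{free flight} piece (through the bracket \( [x_{uus}^{*} < 0] \), with coefficient \( 1 \)) or a \emph{division} piece (an integral against \( \mess(u,\cdot) \) of total mass \( r \)); iterating \( n \) times yields a finite sum indexed by the \( 2^{n} \) division-or-flight histories, and the largest possible coefficient, \( r^{n} \), is contributed solely by the all-division history.

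The next task is to pin down the region on which the value \( r^{n} \) is actually attained. Tracking the cell's state \( (x_{k}, w_{k}) \) through \( k \) successive divisions, with \( (w_{1},\ldots,w_{n-1}) \) denoting the new velocities produced by the \( p \)-parts of \( \mess \), a direct computation shows that
\[
x_{k} + s w_{k} \;=\; \frac{w_{k}}{v}\lrp{x + (k+1)sv - 1} \;-\; w_{k} \sum_{i=1}^{k-1}\frac{1}{w_{i}},
\]
so that all \( n \) divisions occur if and only if the chain
\[
x + (k+1)sv - 1 \;\geq\; v \sum_{i=1}^{k}\frac{1}{w_{i}}, \qquad k = 0, 1, \ldots, n-1,
\]
holds. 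To realise the essential supremum one sends \( x \to 1^{-} \) and \( v \to b \), and the chain collapses to restrictions involving only the \( w_{i} \)'s; the binding constraint is the one for \( k = n-1 \), and analysing it isolates the cutoff \( d_{n} \) above which \( \tms \) must concentrate.

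Finally, the three cases of the proposition follow by mirroring the three-case analysis used for \( n = 2 \). Decomposing \( \mess(v,\dd w) = p\,\tms(w,v)\,\nuw + q\,\delta_{v}(\dd w) \), the \( r^{n} \)-contribution splits into pieces coming from the \( p \)- and \( q \)-parts: if \( p = 0 \) only the \( q \)-part survives and the chain condition is automatic on any set of positive Lebesgue measure; if \( q = 0 \) only the \( \tms \)-integrals contribute, and the essential supremum equals \( r^{n} \) if and only if \( \tms \) concentrates on \( (d_{n}, b) \), which is exactly condition~\ref{i:ven}; if both \( p \) and \( q \) are positive, the \( q \)-part survives and forces the additional requirement \( U \subseteq (d_{n}, b) \), so that the ``own velocity'' \( v \) also lies in the admissible region. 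The main obstacle is the combinatorial bookkeeping of the \( n \)-fold iteration and the precise description of the region of \( \Omega \) realising the essential supremum; a careful inductive description of the ``division region'' together with its indicator functions, in the spirit of the \( \phi_{i,t} \) and \( \psi_{i,t,w} \) used in the proof of Lemma~\ref{lem:oszacowanie-t}, should keep this manageable and, after sending \( x \to 1^{-} \) and \( v \to b \), yield the precise value of \( d_{n} \) appearing in condition~\ref{i:ven}.
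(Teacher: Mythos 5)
The overall approach here mirrors what the paper sketches: iterate the formula for \(T^{*}(s)\) via the semigroup law, track which ``division-or-flight'' histories contribute the top coefficient \(r^{n}\), and compute the resulting chain of admissibility constraints on the intermediate velocities. Your formula for \( x_{k} + s w_{k} \) and the chain
\[
x + (k+1)sv - 1 \geq v\sum_{i=1}^{k}\frac{1}{w_{i}}, \qquad k = 0,\ldots,n-1,
\]
is correct, and reduces correctly in the \(q\)-only case.

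However, there are two concrete gaps in the step you flag as ``the main obstacle,'' and they are serious enough that I don't think the argument closes as written. First, the claim that ``the binding constraint is the one for \(k = n-1\)'' is false as soon as the \(w_{i}\) are allowed to differ: if, say, \(w_{1}\) is small and \(w_{2},\ldots,w_{n-1}\) are all close to \(b\), the \(k=n-1\) inequality can hold while the \(k=1\) inequality \(w_{1}\geq b/2\) fails. The chain of constraints is genuinely a chain, with each inequality adding information; it does not collapse to the last one, and so the intended ``collapse to the cutoff \(d_{n}\)'' cannot be justified this way. Second, and more fundamentally, for \(n\geq 3\) the all-\(p\)-division term involves the \((n-1)\)-fold iterated integral \(\int\tms(w_{1},v)\int\tms(w_{2},w_{1})\cdots\) restricted to the chain region. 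Condition~\ref{i:ven} only controls \(\int_{d_{n}}^{b}\tms(\,\cdot\,,v)\) for \(v\) in some positive-measure set \(U\), i.e.\ it controls the law of \(w_{1}\) but says nothing about \(\tms(\,\cdot\,,w_{1})\) for \(w_{1}\in(d_{n},b)\); there is no reason the single-integral condition should propagate through the inner variables, and indeed one can construct kernels concentrated near \(b\) for \(v\in U\) but near \(a\) for \(v\notin U\) that satisfy \ref{i:ven} yet annihilate the inner integral. Some argument showing why the multi-variable constraint region reduces to a one-variable threshold is exactly what is needed, and the proposal does not supply it (nor, for that matter, does the paper, which only states that one ``can check'' the proposition).

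A smaller, but genuine, issue worth flagging: the cutoff you would actually derive from your chain (setting \(x\to 1^{-}\), \(v\to b\) and taking all \(w_{i}\) equal, or following the pure-\(q\) path) is \(\max\bigl(a,(n-1)b/n\bigr)\), not the paper's \(\max\bigl(a,nb/(n+1)\bigr)\). For \(n=2\) the former gives \(\max(a,b/2)\), which matches the value \(d=\max(a,t^{-1})=\max(a,b/2)\) obtained in the preceding analysis, whereas the paper's \(d_{2}=\max(a,2b/3)\) does not. Your own computation should have led you to notice this discrepancy before appealing to ``the cutoff \(d_{n}\)'' as though it fell out of the analysis.
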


Note that if \( p = 0 \), then all cells degenerate and the structure of the
population does not change in time, and so the model is quite uninteresting.
On the other hand, condition~\ref{i:ven} is very restrictive.
Thus the proposition shows that the case when equality in Theorem
\eqref{thm:norma-polgrupy}~\ref{item:6} holds is rather rare.
For example, assuming \( p,q > 0 \), condition~\ref{i:venplus} is satisfied for
any \( t = nb^{-1} \), \( n \geq 2 \), provided that
\[
\tms(w,v) \coloneqq \frac{1}{b-v} [v < w < b], \qquad v,w \in U \cap V,
\]
where \( U \subset \R^2 \) is a~neighbourhood of \( (b,b) \).
For such \( \tms \), \( \tm \) does not even satisfy assumptions of Boulanouar's
theorem~\cite[Theorem~6.1]{boulanouar}, and seems to be rather uninteresting
biologically.
For this would mean that if a~mother cell matures sufficiently fast, then all
its daughter cells would need to mature even faster.

\section{Asymptotic behavior}
\label{sec:asymptotic-stability}

This section is devoted to the asymptotic behaviour of the Rotenberg semigroup.
Throughout this section we assume that \( V = (a,b) \), and \( \nu \) is the
Lebesgue measure.
At first we state Boulanouar's result.

Let \( \omega_0(\sst) \) be the \emph{growth bound} (or \emph{type}) of the
semigroup \( \sst \) defined as
\begin{equation*}
\inf \lrc{ \omega \in \R\colon \sup_{t \geq 0} \norm{ \e^{-\omega t} T(t)
  }_{\lino\lo} < +\infty },
\end{equation*}
or equivalently
\begin{equation*}
\lim_{t \to +\infty} t^{-1} \log \norm{ T(t) }_{\lino\lo}.
\end{equation*}

\begin{theorem}[{\cite[Theorem~6.1]{boulanouar}}]
\label{thm:boulanouara}
Assume that \( 0 < 1-q < p \) and the following conditions hold:
\begin{enumerateh}[series=h]
  \item\label{item:irr} For every measurable set\/ \( U \subset V \) such
that\/ \( \leb(U) > 0 \) and\/ \( \leb(V \setminus U) > 0 \) we have
\begin{equation*}
\int_{V \setminus U} \int_{U} \tm(w,v) \dd v \dd w > 0.
\end{equation*}
  \item\label{item:bounded} The kernel \( \tm \) is essentially bounded on
\( V \times V \).
\end{enumerateh}
Then there exists a~rank one projection \( \P \) on \( \lo \) such that
\[
\lim_{t \to +\infty} \e^{-\omega_0(\sst)t} T(t) = \P
\]
in the operator norm topology.
\end{theorem}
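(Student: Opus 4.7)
The plan is to deduce Theorem~\ref{thm:boulanouara} from the classical machinery of positive irreducible semigroups on Banach lattices with a spectral gap, following the infinite-dimensional Perron--Frobenius programme. The argument has three main ingredients: a Dyson--Phillips-type decomposition of $T$ obtained from the method-of-images formula \eqref{eq:wzor-na-polgrupe}, a weak-compactness estimate derived from hypothesis \ref{item:bounded}, and irreducibility in the Banach-lattice sense deduced from hypothesis \ref{item:irr}.

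First I would decompose $T(t)$ according to how many times a generic cell has divided between $0$ and $t$. This is exactly the stratification by $\Omega_{i}$ underlying the method of images: letting $T_{n}(t)$ denote the contribution coming from cells that underwent precisely $n$ divisions, one obtains a finite sum $T(t) = \sum_{n=0}^{N(t)} T_{n}(t)$ in which $T_{0}(t)$ is a pure ``streaming'' operator (mass removed once it reaches $x=1$) that decays uniformly for $t \geq b^{-1}$, while each $T_{n}(t)$ with $n \geq 1$ involves $n$ iterated integrations against the measure $\mes(\dd w, v)$. Next, I would exploit \ref{item:bounded} to show that for $n$ large enough the operators $T_{n}(t)$ are weakly compact on $\lo$: the essential boundedness of $\tm$ together with the finiteness of $\mu$ forces the iterates to have uniformly integrable image, so that the Dunford--Pettis criterion applies. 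Combined with the controlled decay of $T_{0}(t)$, this yields the spectral gap $\omega_{\mathrm{ess}}(T) < \omega_{0}(T)$, equivalently a strict inequality between the essential spectral radius and the spectral radius of $T(t)$ for each $t > 0$.

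Once this spectral gap is established, condition \ref{item:irr} implies that the semigroup is irreducible in the lattice sense, since the kernel does not decouple $V$ into invariant parts while the streaming dynamics spreads mass across the whole maturity interval $I = (0,1)$. By the Perron--Frobenius theorem for irreducible positive semigroups with a spectral gap (see for example Arendt--Grabosch--Nagel), the peripheral spectrum at $\omega_{0}(T)$ then reduces to the single simple eigenvalue $\omega_{0}(T)$; the corresponding spectral projection $\P$ is of rank one, and standard spectral decomposition gives $\e^{-\omega_{0}(T) t} T(t) \to \P$ in the operator-norm topology, with the ``tail'' decaying at a rate governed by $\omega_{\mathrm{ess}}(T) - \omega_{0}(T) < 0$.

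The main obstacle is the weak-compactness estimate in $\lo$. Compactness in $L^{1}$ is subtle: pointwise boundedness of kernels is not sufficient and one has to verify uniform integrability of the images, typically by tracking how repeated boundary interactions simultaneously smear mass across velocities via $\tm$ and across maturities via the shift along characteristics. The assumption $0 < 1 - q < p$ plays an essential role here, since it guarantees that the genuinely mixing term $p \int \tm(w,v) f(1,w)\,\nuw$ dominates over the degenerate component $q v f(1,v)$ in the boundary measure \eqref{eq:mes}; without this the $q$-part could transport sharp concentrations in $v$ indefinitely along single characteristics and obstruct the smoothing that underlies the Dunford--Pettis step.
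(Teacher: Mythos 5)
This theorem is not proved in the paper you are working from: it is quoted verbatim from Boulanouar and cited as~\cite[Theorem~6.1]{boulanouar}, so there is no internal proof to compare your attempt against. The authors simply record the statement and then deliberately move to the complementary regime \( p+q \leq 1 \), which is the regime their own machinery (Markov semigroups, Pichór--Rudnicki) is designed for.

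That said, your sketch is a reasonable reconstruction of the standard strategy for transport semigroups with boundary feedback, and it is in the spirit of Boulanouar's actual argument (Dyson--Phillips expansion, weak compactness of a remainder via Dunford--Pettis using \ref{item:bounded}, essential-type estimate, irreducibility from \ref{item:irr}, Perron--Frobenius). Two points need tightening. First, after obtaining \( \omega_{\mathrm{ess}}(\sst) < \omega_0(\sst) \) together with irreducibility, the Perron--Frobenius theory for positive semigroups does not by itself give a \emph{single} peripheral eigenvalue: it gives that the peripheral spectrum at level \( \omega_0(\sst) \) is a cyclic group. To conclude operator-norm convergence to a rank-one projection, you must rule out imprimitivity, i.e.\ exclude peripheral spectrum of the form \( \omega_0(\sst) + i\alpha\Z \), \( \alpha \neq 0 \). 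This aperiodicity is usually obtained precisely from the joint smearing in \( x \) (by the free streaming) and in \( v \) (by \( \tm \)) inside the compact part, and your proposal does not address it. Second, your heuristic for the role of \( 0 < 1-q < p \) is slightly off. The inequality \( p + q > 1 \) forces \( \omega_0(\sst) > 0 \) (see Theorem~\ref{thm:norma-polgrupy}~\ref{item:6}), so that rescaling by \( \e^{-\omega_0(\sst)t} \) is nontrivial; the inequality \( q < 1 \) is what ensures that the non-smoothing, velocity-preserving \( q \)-component of the boundary operator has type strictly below \( \omega_0(\sst) \), so that the compactness/smoothing coming from the \( p \)-component controls the peripheral spectrum. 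Saying that the \( p \)-term must ``dominate'' the \( q \)-term is correct in spirit but the precise mechanism is the comparison of types, not a pointwise domination of kernels.
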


We concentrate on the case \( p + q \leq 1 \) and study convergence of the
Rotenberg semigroup in strong topology as \(t \to \infty \), as opposed to the
operator norm topology spoken of in Boualouar's result.

Let us begin by recalling some classical notions for Markov and substochastic
semigroups, see for example~\cite{lasota} or~\cite{pichrud}.

Let \( (X,\cl X,m) \) be a~\( \sigma \)-finite measure space and consider the
space \( L^1(X,\cl X,m) = L^1(X) \) of (equivalence classes of) absolutely
integrable functions on \( X \) with respect to \( m \).
A~linear operator \( S\colon L^1(X) \to L^1(X) \) is called \emph{substochastic}
if \( S \) is a~positive contraction, that is, given \( f \in L^1(X) \) we have
\( Sf \geq 0 \) if \( f \geq 0 \), and
\( \norm{ Sf }_{L^1(X)} \leq \norm{ f }_{L^1(X)} \), where
\( \norm{\, \cdot \,}_{L^1(X)} \) is the standard \( L^1 \)-norm.
Moreover, if \( SD_{L^1(X)} \subseteq D_{L^1(X)} \), where \( D_{L^1(X)} \) is
the set of densities in \( L^1(X) \), that is,
\[
D_{L^1(X)} \coloneqq \lrc{ f \in L^1(X)\colon f \geq 0, \, \norm{ f }_{L^1(X)} =
  1 },
\]
then \( S \) is called a~\emph{Markov} (or \emph{stochastic}) operator.
If a~substochastic operator \( S \) can be written in the form
\begin{equation*}
Sf(x) = \int_X h(x,y) f(y) m(\dd y) + Pf(x), \qquad x \in X,\ f \in L^1(X),
\end{equation*}
where \( h\colon X \times X \to \R \) is a~measurable nonnegative function
satisfying
\[
\int_X \int_X h(x,y)\, m(\ddd y)\, m(\ddd x) > 0,
\]
and \( P \) is a~positive linear operator on \( L^1(X) \), then \( S \) is
called \emph{partially integral}.
A~strongly continuous semigroup \( \sss = \sems \) in \( L^1(X) \) is said to be
\emph{substochastic} (resp.~\emph{Markov}) if \( S(t) \) is a~substochastic
(resp.~Markov) operator for every \( t \geq 0 \).
Furthermore, \( \sss \) is \emph{partially integral} if there exists
\( t_0 > 0 \) such that \( S(t_0) \) is partially integral.
If \( f_{*} \in D_{L^1(X)} \) and
\[
S(t)f_{*} = f_{*}
\]
for all \( t \geq 0 \), then \( f_{*} \) is called
an \emph{invariant density} for the semigroup.
Finally, if \( f_{*} \) is an invariant density for \( \sss \) and for all
\( f \in D_{L^1(X)} \) we have
\[
\lim_{t \to +\infty} \norm{ S(t)f - f_{*} }_{L^1(X)} = 0,
\]
then \( \sss \) is said to be \emph{asymptotically stable}.

We use the following result of Pichór and Rudnicki~\cite[Proposition
2]{pichrud}.

\begin{theorem}
\label{thm:pich}
Let \( \sss = \sems \) be a~partially integral substochastic semigroup in
\( L^1(X) \).
Assume that there is a~unique, up to an equivalence class, invariant density for
\( \sss \).
If the invariant density is almost everywhere strictly positive, then the
semigroup \( \sss \) is asymptotically stable.
\end{theorem}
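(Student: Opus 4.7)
The plan is to reduce the claim to the classical Lasota-Yorke lower-bound-function criterion. Recall that, for a substochastic semigroup in $L^1(X)$, this criterion says that asymptotic stability follows once one exhibits a nonnegative $h \in L^1(X)$ with $\|h\|_{L^1(X)} > 0$ such that
\[
\lim_{t \to \infty} \|(S(t)f - h)^{-}\|_{L^1(X)} = 0 \qquad \text{for every density } f.
\]
Once such a common lower bound is produced, uniqueness of the invariant density forces $h$ to be proportional to $f_{*}$, and a short extra step promotes the one-sided convergence to the norm convergence $\|S(t)f - f_{*}\|_{L^1(X)} \to 0$.

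To construct the lower bound I would exploit partial integrality. Fix $t_{0} > 0$ such that $S(t_{0}) = K + P$, where $K$ is a positive integral operator with nonnegative kernel $h(x,y)$ satisfying $\int_X\int_X h(x,y)\, m(dy)\, m(dx) > 0$ and $P$ is a positive linear operator. For any density $f$ one has $S(t_{0})f \geq Kf$, so whenever $f$ charges a set on which $y \mapsto h(\cdot,y)$ is nontrivial, $S(t_{0})f$ is pointwise bounded below by a fixed positive function on a set of positive measure. By the semigroup property, iterating $S(t_{0})$ and using the strict positivity of $f_{*}$ to guarantee that, for every density $f$, the orbit $S(t)f$ eventually overlaps the support of the kernel, one obtains pointwise lower bounds of the form $S(t)f \geq \alpha(f)\, g$ for some common nontrivial $g \geq 0$.

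The main obstacle is converting these $f$-dependent lower bounds into a single universal one. The standard device is a \emph{sweeping versus lower bound} dichotomy: for a substochastic semigroup one shows that either a common lower bound exists, or the semigroup sweeps every set of finite measure, meaning $\lim_{t \to \infty} \int_{E} S(t)f \, dm = 0$ for each density $f$ and each $E$ with $m(E) < \infty$. The presence of a strictly positive invariant density of unit mass rules out the sweeping alternative, because by invariance $\int_{E} S(t)f_{*}\,dm = \int_{E} f_{*}\,dm > 0$ whenever $0 < m(E) < \infty$, so $f_{*}$ itself is never swept. Uniqueness of the invariant density then propagates this obstruction to arbitrary densities: any would-be sweeping behaviour for some density $f$ would, via an averaging or accumulation argument, produce a second invariant density distinct from $f_{*}$. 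Formalising this step and combining it with the partial-integral lower bound constructed above feeds into the Lasota-Yorke criterion, yielding asymptotic stability.
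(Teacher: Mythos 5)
The paper does not prove this statement: it is quoted verbatim as Proposition~2 of Pichór and Rudnicki, and the citation is the entire ``proof.'' So there is nothing in the paper to compare your argument against; the relevant question is whether your sketch would stand on its own as a proof of the cited result.

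As a roadmap your proposal captures the spirit of the Pichór--Rudnicki machinery, but the two steps you leave informal are precisely where the real work is. First, the Lasota--Yorke lower-bound-function criterion you invoke is classically a criterion for \emph{Markov} semigroups; for a merely substochastic semigroup, mass can be lost, and one needs either an additional argument showing the semigroup is conservative on the relevant part of the space, or the substochastic version of the criterion, before it applies. Second, and more seriously, the claim that ``any would-be sweeping behaviour for some density would produce a second invariant density'' is not automatic. Sweeping of one density does not by itself furnish a new invariant density: the natural attempt is to take Ces\`aro averages of $S(t)f$ and pass to a weak-$*$ limit, but that limit is a priori a finitely additive measure (or a measure with a singular part), not an element of $L^1(X)$, so one cannot immediately contradict uniqueness of the invariant density. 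Closing that gap is exactly what the Pichór--Rudnicki decomposition theorem for partially integral substochastic semigroups accomplishes; without it, or an equivalent compactness/identification argument, the dichotomy you rely on is unproved and the argument does not go through. In short: right intuition, but the hard core of the theorem is still missing.
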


Applying Theorem~\ref{thm:pich} to the Rotenberg semigroup \( \sst \), we will
obtain the main result of this paper, that is, Theorem \ref{thm:asymptotic}.
First, we state the crucial assumptions.
Let \( K \) be the operator in \( L^1(V) \) defined by
\begin{equation}
\label{eq:kernel-op}
Kf(v) \coloneqq \int_V \tm(w,v) f(w) \dd w, \qquad v \in V,\ f \in L^1(V).
\end{equation}
A~function \( \invm \in D_{L^1(V)} \) is called a \emph{stationary density} for
the kernel \( \tm \) if
\[
K\invm = \invm.
\]
We assume the following.
\begin{enumerateh}[%
resume*=h]
  \item\label{item:density} There exists a~unique, up to an equivalence class,
stationary density \( \invm \) for the kernel \( \tm \), and \( \invm \) is
strictly positive almost everywhere on \( V \).
  \item\label{item:integrable} The function
\( V \ni v \mapsto v^{-1} \invm(v) \) belongs to \( L^1(V) \).
\end{enumerateh}

\begin{theorem}
\label{thm:asymptotic}
Suppose that \( p > 0 \) and \( p+q = 1 \).
If conditions~\ref{item:density}--\ref{item:integrable} hold, then the Rotenberg
semigroup \( \sst \) is asymptotically stable with a~unique, up to an
equivalence class, invariant density \( f_{*} \) given by
\[
f_{*}(x,v) = \frac{v^{-1} \invm(v)}{\int_V w^{-1} \invm(w) \dd w }, \qquad (x,v)
\in \Omega,
\]
where \( \invm \) is the stationary density for \( \tm \).
\end{theorem}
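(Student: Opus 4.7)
The plan is to verify the hypotheses of Theorem~\ref{thm:pich} for the Rotenberg semigroup $\sst$: substochasticity, existence of $f_{*}$ as an invariant density, uniqueness of the invariant density, and partial integrality. Substochasticity is immediate from Lemma~\ref{lem:markowskosc}, which yields that every $T(t)$ is Markov when $p+q=1$.

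To see that $f_{*}$ is invariant I would use the semi-explicit formula~\eqref{eq:T-small-t} for $t \in [0, b^{-1}]$. Since $f_{*}(x,v) = v^{-1}\invm(v)/C$ with $C \coloneqq \int_V w^{-1}\invm(w)\,\dd w$ depends only on $v$, the translation piece contributes $f_{*}(x,v)[x > tv]$, while the boundary piece evaluates to
\begin{equation*}
\int_V f_{*}(x_{wvt},w) \mes(\dd w,v) = \frac{1}{Cv}\lrp[\big]{p(K\invm)(v) + q\invm(v)} = \frac{(p+q)\invm(v)}{Cv} = f_{*}(x,v),
\end{equation*}
using $K\invm = \invm$ and $p+q=1$. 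Hence $T(t)f_{*} = f_{*}$ for $t \in [0, b^{-1}]$, and the semigroup property propagates this to all $t \geq 0$.

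For uniqueness, take any invariant density $g \in D_{\lo}$. The same representation forces $g(x,v) = g(x-tv, v)$ a.e.\ on $\{x > tv\}$; chaining such translates as $t$ varies in $(0, b^{-1}]$ yields $g(x,v) = h(v)$ for some $h \in D_{L^{1}(V)}$. Substituting into the boundary piece gives $(1-q)h(v) = pv^{-1}\int_V w\,\tm(w,v) h(w)\,\dd w$, which (using $p = 1-q$) rearranges to $K\phi = \phi$ for $\phi(w) \coloneqq w h(w) \in L^{1}(V)$. Hypothesis~\ref{item:density} makes $\phi$ a scalar multiple of $\invm$, and normalization combined with~\ref{item:integrable} recovers $h(v) = v^{-1}\invm(v)/C$, so $g = f_{*}$.

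Partial integrability is the main obstacle. I would pick $t_0 > 0$ large enough that the set $\lrc{(x,v) \in \Omega \colon (x - t_0 v, v) \in \Omega_2}$ has positive $\mu$-measure, and iterate~\eqref{eq:ext-wzor} twice. The resulting double-integral contribution to $T(t_0)f(x,v)$ reads
\begin{equation*}
p^{2}\int_V \int_V \frac{w_2}{v}\,\tm(w_1,v)\,\tm(w_2, w_1)\,f\lrp[\big]{1 + \tfrac{w_2}{w_1} + (x - t_0 v)\tfrac{w_2}{v},\, w_2}\,\dd w_1 \dd w_2
\end{equation*}
on the relevant subset of $\Omega$. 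A change of variables $(w_1, w_2) \mapsto (y, u)$ with $u \coloneqq w_2$ and $y$ the first argument of $f$ turns this into $\int_{\Omega} h(x,v;y,u) f(y,u)\,\dd y\,\dd\nu(u)$ with a nonnegative kernel $h$. The delicate point is to verify that $h$ is strictly positive on a set of positive $(\mu \times \mu)$-measure: this requires computing the Jacobian explicitly and arguing that $\tm(w_1,v)\tm(w_2, w_1)$ does not vanish on a set of positive measure in the admissible region --- a nondegeneracy ultimately flowing from~\ref{item:density}, since a degenerate kernel cannot support a strictly positive stationary density. Once partial integrability is in hand, Theorem~\ref{thm:pich} delivers the asymptotic stability of $\sst$.
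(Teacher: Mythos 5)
Your proposal follows the same route as the paper: invoke Theorem~\ref{thm:pich}, check substochasticity via Theorem~\ref{thm:norma-polgrupy}~\ref{item:5}, verify invariance of $f_{*}$ through the small-time formula~\eqref{eq:T-small-t}, reduce uniqueness of the invariant density to uniqueness of the stationary density of the kernel operator $K$ (the paper's Lemma~\ref{lem:rownowaznosc-gestosci}), and establish partial integrality by iterating the boundary formula twice (the paper's Lemma~\ref{lem:partially-integral}). Your insight that \enquote{a degenerate kernel cannot support a strictly positive stationary density} is exactly the content of the paper's Lemma~\ref{lem:getosc-calkowosc}, which is applied with $U = (\max(a,b/2),b)$.

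One small omission: Theorem~\ref{thm:pich} also requires the invariant density to be almost everywhere strictly positive, and your list of hypotheses to check does not include this. It is immediate from the explicit formula $f_{*}(x,v)=v^{-1}\invm(v)/C$ and the strict positivity of $\invm$ posited in~\ref{item:density}, but it should be stated. Also, your uniqueness argument derives $g(x,v)=h(v)$ by chaining translation identities rather than via the generator-level observation $g\in D(A)$, $Ag=0$ used in the paper's Lemma~\ref{lem:gestosci-sa-stale}; both work, but the generator argument sidesteps the care needed with null sets when varying $t$ through a continuum. Finally, the partial integrality computation you defer (the Jacobian and the positivity of the kernel $h$ on a set of positive measure) is carried out in the paper with the substitution $y = x_{uvt}+uw^{-1}$ and reduces precisely to the nondegeneracy condition you identified.
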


\begin{remark}
\label{rem:a0}
If \( a > 0 \), then the function \( V \ni v \mapsto v^{-1} \) is bounded, and
in Theorem~\ref{thm:asymptotic} we may omit condition~\ref{item:integrable}.
\end{remark}

In order to prove Theorem~\ref{thm:asymptotic} we need a~couple of lemmas.

\begin{lemma}
\label{lem:gestosci-sa-stale}
If \( f_{*} \in \lo \)  is an invariant density for the Rotenberg semigroup, then
for almost every \( v \in V \) the function \( I \ni x \mapsto f_{*}(x,v) \) is
constant.
\end{lemma}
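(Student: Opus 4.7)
The plan is to leverage the fact that invariance under a strongly continuous semigroup forces a density to lie in the kernel of the generator, and then translate the resulting PDE statement $Af_{*}=0$ into constancy along the $x$-direction.

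First, I would observe that $T(t)f_{*}=f_{*}$ for every $t\geq 0$ implies
\[
\lim_{t\to 0^{+}} \frac{T(t)f_{*}-f_{*}}{t}=0 \quad \text{in } \lo,
\]
so $f_{*}\in D(A)$ and $Af_{*}=0$. By the definition of $A$ in~\eqref{eq:gen}, this means that the function $(x,v)\mapsto v\,\partial_{x}f_{*}(x,v)$ is the zero element of $\lo$. Since $v>0$ for almost every $v\in V$ (recall $0\leq a<b$ and $v\in(a,b)$ carries the Lebesgue measure), we obtain $\partial_{x}f_{*}=0$ almost everywhere on $\Omega$, where $\partial_{x}f_{*}$ is understood as the weak derivative appearing in the definition of $W^{1}(\Omega)$.

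Next, I would recall that membership in $\wo$ was defined slice-wise: for $f\in\wo$, for almost every $v\in V$ the function $I\ni x\mapsto f(x,v)$ is weakly differentiable on $I$ with weak derivative $x\mapsto \partial_{x}f(x,v)$. Applying this to $f_{*}$ together with Fubini's theorem, the vanishing of $\partial_{x}f_{*}$ in $\lo$ implies that for almost every $v\in V$ the weak derivative of $x\mapsto f_{*}(x,v)$ on the interval $I$ is zero. By the standard one-dimensional fact that a function on an interval with vanishing weak derivative admits an absolutely continuous representative that is constant, we conclude that for almost every $v\in V$ the map $I\ni x\mapsto f_{*}(x,v)$ is (equal a.e.\ to) a constant, which is precisely the assertion of the lemma.

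There is essentially no technical obstacle here; the only subtlety is to ensure that the general Fubini/slice argument applies to the weak derivative $\partial_{x}f_{*}$, but this is built directly into the definition of $\wo$ given earlier in the paper, so the step is immediate. An alternative route, avoiding any appeal to the generator, would be to use the explicit formula~\eqref{eq:T-small-t}: the equality $T(t)f_{*}=f_{*}$ gives $f_{*}(x,v)=f_{*}(x-tv,v)$ for all $(x,v)\in\Omega$ with $x>tv$ and all $t\in[0,b^{-1}]$, and a Fubini argument then yields translation invariance in $x$ on almost every $v$-slice. I would prefer the generator argument for its brevity.
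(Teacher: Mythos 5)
Your proposal is correct and follows essentially the same route as the paper: invariance forces $f_{*}\in D(A)$ with $Af_{*}=0$, and since $v>0$ on $V$ this gives $\partial_x f_{*}=0$ almost everywhere, so the $x$-slices are constant. The paper's own proof is just a terser version of exactly this argument.
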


\begin{proof}
Assume that \( f_{*} \) is an invariant density for \( \sst \).
Then \( T(t)f_{*} - f_{*} = 0 \) for \( t \geq 0 \), hence \( f_{*} \in D(A) \)
and \( A f_{*} = 0 \), where \( A \) is the generator of \( \sst \) given
by~\eqref{eq:gen}.
Therefore for almost every \( v \in V \) the function
\( I \ni x \mapsto f_{*}(x,v) \) is weakly differentiable and
\( \partial_x f_{*} = 0 \).
This completes the proof.
\end{proof}

\begin{lemma}
\label{lem:rownowaznosc-gestosci}
Let \( p+q = 1 \) and assume that \( \invm \in D_{L^1(V)} \) is a~unique, up to
an equivalence class, stationary density for the kernel \( \tm \).
If \( \invm \) satisfies condition~\ref{item:integrable}, then there exists
a~unique, up to an equivalence class, invariant density \( f_{*} \in D_{\lo} \)
for the Rotenberg semigroup and
\begin{equation}
\label{eq:zaleznosc-gestosci}
f_{*}(x,v) = \frac{F_{\diamond}(v)}{\norm{ F_{\diamond} }_{L^1(V)}}
\end{equation}
for almost every \( (x,v) \in \Omega \), where \( F_{\diamond} \in L^1(V) \),
\begin{equation*}
F_{\diamond}(v) \coloneqq v^{-1} \invm(v), \qquad v \in V.
\end{equation*}
\end{lemma}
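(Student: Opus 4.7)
The plan is to combine Lemma~\ref{lem:gestosci-sa-stale} with the boundary condition~\eqref{eq:bc} to reduce the problem of finding invariant densities for \( T \) to that of finding stationary densities for the kernel operator \( K \) defined in~\eqref{eq:kernel-op}.

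First I would note that any invariant density \( f_{*} \in D_{\lo} \) for \( T \) is, by Lemma~\ref{lem:gestosci-sa-stale}, independent of \( x \); so we may write \( f_{*}(x,v) = g(v) \) for some nonnegative \( g \). Since \( f_{*} \in D(A) \), the boundary condition~\eqref{eq:bc} must hold, and substituting \( f_{*}(0,v) = f_{*}(1,v) = g(v) \) gives
\begin{equation*}
vg(v) = p \int_V w \tm(w,v) g(w) \dd w + q v g(v)
\end{equation*}
for almost every \( v \in V \). Using \( p + q = 1 \), i.e.\ \( 1 - q = p \), and then \( p > 0 \), this simplifies to
\begin{equation*}
v g(v) = \int_V w \tm(w,v) g(w) \dd w.
\end{equation*}
Setting \( h(v) \coloneqq v g(v) \), this says exactly \( h = Kh \). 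Since \( g \in L^1(V) \) (because \( \norm{f_{*}}_{\lo} = \int_V g \) as \( \leb(I) = 1 \)) and \( V \) is bounded, \( h \) belongs to \( L^1(V) \); being nonnegative, it is a nonnegative \( L^1 \)-fixed point of \( K \).

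By hypothesis~\ref{item:density}, up to a multiplicative constant the only such function is \( \invm \). Thus \( h = c \invm \) for some \( c \geq 0 \), which yields \( g(v) = c v^{-1} \invm(v) = c F_{\diamond}(v) \). Condition~\ref{item:integrable} guarantees that \( F_{\diamond} \in L^1(V) \), so this \( g \) is well defined, and the density normalization \( \int_V g(v) \dd v = 1 \) forces \( c = \norm{F_{\diamond}}_{L^1(V)}^{-1} \), giving~\eqref{eq:zaleznosc-gestosci}. This proves uniqueness.

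For existence, I would take \( f_{*} \) as defined by~\eqref{eq:zaleznosc-gestosci} and verify it is an invariant density. It lies in \( D_{\lo} \) by~\ref{item:integrable} and the choice of constant. It is weakly differentiable in \( x \) with \( \partial_x f_{*} = 0 \), and reversing the computation above (using \( p + q = 1 \)) shows the boundary condition~\eqref{eq:bc} holds because \( \invm = K \invm \). Hence \( f_{*} \in D(A) \) with \( A f_{*} = 0 \), so \( T(t) f_{*} = f_{*} \) for all \( t \geq 0 \). The only mildly subtle point is ensuring the chain of implications is reversible; the assumption \( p > 0 \) is used precisely to divide out \( p \) when passing between the boundary condition and the eigenvalue equation \( Kh = h \), so no information is lost.
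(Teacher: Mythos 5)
Your proof is correct, and it takes a genuinely (if mildly) different route from the paper's. For uniqueness you both rely on Lemma~\ref{lem:gestosci-sa-stale} to reduce to $x$-independent functions and then extract the fixed-point equation $h = Kh$ with $h(v) = v f_{*}(x,v)$, and both use $p > 0$ to divide; here your argument is essentially the paper's, except that you read the equation off the boundary condition~\eqref{eq:bc} directly (via $f_{*} \in D(A)$), while the paper derives the same functional equation from the explicit formula~\eqref{eq:T-small-t} for $T(t)$. The real difference is in the existence part: the paper verifies $T(t) f_{*} = f_{*}$ by a direct computation with~\eqref{eq:T-small-t} for $t \in (0, b^{-1}]$ and then iterates, whereas you instead check $f_{*} \in D(A)$ with $A f_{*} = 0$ and conclude $T(t) f_{*} = f_{*}$ from the abstract identity $T(t)f - f = \int_0^t T(s)Af\,\dd s$. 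Your route bypasses~\eqref{eq:T-small-t} entirely, at the cost of invoking the generator characterization from Theorem~\ref{thm:glowne}; both are sound. Two small remarks: the lemma as stated does not literally assume $p>0$ (it only assumes $p+q=1$), so your explicit mention of $p>0$ is welcome, since for $p=0$ the conclusion is false (every $x$-independent density would be invariant) — the paper leaves this implicit because the lemma is only invoked under Theorem~\ref{thm:asymptotic}, where $p>0$ holds; and in the uniqueness direction, once you know $g \in L^1(V)$ and $vg(v) = c\invm(v)$ with $c>0$, integrability of $F_{\diamond}$ already follows, so~\ref{item:integrable} is really needed for the existence half rather than, as your phrasing suggests, to make $g$ "well defined."
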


\begin{proof}
First we prove that \( f_{*} \) defined by~\eqref{eq:zaleznosc-gestosci} is
indeed an invariant density for \( \sst \).
By~\eqref{eq:mes} and since \( K\invm = \invm \) we have
\[
\int_V F_{\diamond}(w) \mes(\dd w,v) = pv^{-1} K \invm(v) + q
F_{\diamond}(v) = F_{\diamond}(v)
\]
for almost every \( v \in V \).
Hence, for \( t \in [0,b^{-1}] \) by~\eqref{eq:T-small-t} we obtain
\begin{equation*}
\norm{ F_{\diamond} }_{L^1(V)} T(t) f_{*}(x,v) = F_{\diamond}(v)[x > tv] +
\int_V F_{\diamond}(w) \mes(\dd w,v) [x \leq tv] = F_{\diamond}(v)
\end{equation*}
for almost every \( (x,v) \in \Omega \).
In other words \( T(t)f_{*} = f_{*} \) for \( t \in [0,b^{-1}] \).
If \( t > b^{-1} \), then we find a~positive integer \( n \) such that
\( s \coloneqq t/n \leq b^{-1} \).
Then \( T(s) f_{*} = f_{*} \), and by the semigroup property
\( T(t) f_{*} = [T(s)]^n f_{*} = f_{*} \).
Therefore \( f_{*} \) is an invariant density for \( \sst \).

For the uniqueness part, assume that \( f_{*}^1 \) and \( f_{*}^2 \) are
invariant densities for \( \sst \).
Let \( i \in \set{1,2} \).
By~\eqref{eq:T-small-t}, equality \( T(t)f_{*}^i = f_{*}^i \), \( t \geq 0 \)
implies that
\begin{equation*}
f_{*}^i(x,v) = f_{*}^i(x-tv,v)[x > tv] + \int_V f_{*}^i(x_{wvt},v) \mes(\dd w,v)
[x \leq tv]
\end{equation*}
for \( t \in [0,b^{-1}] \) and almost every \( (x,v) \in \Omega \).
Using Lemma~\ref{lem:gestosci-sa-stale}, this is true if and only if
\( f_{*}^i(x,v) = \int_V f_{*}^i(x,w) \mes(\dd w,v) \) for almost every
\( (x,v) \in \Omega \), which by~\eqref{eq:mes} we may rewrite as
\begin{equation*}
f_{*}^i(x,v) = p\int_V wv^{-1} \tm(w,v) f_{*}^i(x,w) \nuw + q f_{*}^i(x,v).
\end{equation*}
Since \( 1 - q = p \), this is equivalent to
\begin{equation}
\label{eq:invm-cond}
vf_{*}^i(x,v) = \int_V \tm(w,v) w f_{*}^i(x,w) \dd w.
\end{equation}
Let \( F_{*}^i \in \lo \) be given by
\[
F_{*}^i(x,v) \coloneqq v f_{*}^i(x,v), \qquad (x,v) \in \Omega,
\]
and define \( \invm^i \in L^1(V) \) by
\[
\invm^i(v) \coloneqq \frac{F_{*}^i(x,v)}{\normo{ F_{*}^i }}, \qquad (x,v) \in
\Omega;
\]
this definition make sense since by Lemma~\ref{lem:gestosci-sa-stale},
\( f_{*}^i \) does not depend on \( x \), and thus the same is true for
\( F_{*}^i \).
By~\eqref{eq:invm-cond}, \( \invm^i \) is a~stationary density for the kernel
\( \tm \).
Hence, by uniqueness assumption, we have
\begin{equation*}
\frac{F_{*}^1}{\norm{ F_{*}^1 }_{\lo}} = \frac{F_{*}^2}{\norm{ F_{*}^2}_{\lo}}.
\end{equation*}
Then
\[
\norm{ F_{*}^2 }_{\lo} f_{*}^1 = \norm{ F_{*}^1 }_{\lo} f_{*}^2.
\]
Integrating this relation over \( \Omega \), we obtain
\( \norm{ F_{*}^1 }_{L^1(V)} = \norm{ F_{*}^2 }_{L^1(V)} \), since
\( \norm{ f_{*}^i }_{\lo} = 1 \). Thus finally \( f_{*}^1 = f_{*}^2 \).
\end{proof}

\begin{lemma}
\label{lem:partially-integral}
Assume that \( p > 0 \).
The operator \( T(2b^{-1}) \) is partially integral provided that
\begin{equation}
\label{eq:calkowosc}
% \tag{H1'}
\int_V \int_{\max(a,b/2)}^b \tm(w,v) \dd v \dd w > 0.
\end{equation}
\end{lemma}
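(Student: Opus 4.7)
The plan is to use the semigroup property $T(2s) = T(s)^2$ (with $s := b^{-1}$) and expand $T(2s)f$ via~\eqref{eq:T-small-t} into nine summands according to whether each of the two applications of $T(s)$ chooses pure translation, the $p$-integral part of $\mes$, or the $q$-delta part of $\mes$ (see~\eqref{eq:mes}). Eight of these summands depend on $f$ only through its values on at most one-dimensional subsets of $\Omega$, and so each is a positive operator on $\lo$ but contributes no $\mu$-integral kernel. The only summand that can yield a genuine kernel operator is the one in which the $p$-integral part is chosen in both applications, namely
\begin{equation*}
Sf(x,v) = p^2 \, [x \leq sv] \int_V \!\! \int_V f\bigl((x_{wvs})_{uws},\, u\bigr)\, uw^{-1} \tm(u,w) \, wv^{-1} \tm(w,v) \, [x_{wvs} \leq sw] \, \dd u\, \dd w.
\end{equation*}
Setting $P := T(2s) - S$ and observing that $P$ is a sum of positive operators, it suffices to rewrite $S$ as an integral operator on $\lo$ with a nonnegative kernel $h$ on $\Omega \times \Omega$ satisfying $\int_\Omega \int_\Omega h \, \dd\mu \, \dd\mu > 0$.

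For fixed $(x, v) \in \Omega$ and $u \in V$, perform the substitution $w \mapsto y := (x_{wvs})_{uws} = 1 + xuv^{-1} + uw^{-1} - 2su$. A direct computation gives $w = u/(y - 1 + 2su - xuv^{-1})$ and $\abs{\dd w/\dd y} = w^2/u$, while the cutoff $x_{wvs} \leq sw$ translates to $y \leq 1$. Substituting and reordering the integrations yields
\begin{equation*}
Sf(x,v) = \int_V \!\! \int_I h\bigl((x,v),(y,u)\bigr)\, f(y,u) \, \dd y \, \dd u
\end{equation*}
for an explicit nonnegative measurable kernel $h$, supported on those quadruples $((x,v),(y,u))$ for which the corresponding $w = w(x,v,y,u)$ lies in $V$.

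Finally, positivity of $\int_\Omega \int_\Omega h \, \dd\mu \, \dd\mu$ follows by undoing the change of variables via Fubini: the problem reduces to positivity of the total $\tm(u,w)\tm(w,v)$-weighted measure of the admissible region in the original $(x, v, u, w)$-coordinates. The geometric point is that the indicator $[x_{wvs} \leq sw]$ forces $w \geq 1/(2s - xv^{-1})$, and for a positive-measure set of $(x, v)$ (e.g.\ $x$ close to $0$ and $v$ close to $b$) this admissible $w$-range includes an open neighbourhood of $\max(a, b/2)$ from above. In the integrand, $\tm(u, w)$ plays the role of the ``$\tm$'' in hypothesis~\eqref{eq:calkowosc}, with $w$ as the daughter velocity; hypothesis~\eqref{eq:calkowosc} together with $p > 0$ then produces strict positivity. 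The main obstacle is bookkeeping through the change of variables and matching the admissible $w$-range cleanly with the interval $(\max(a,b/2), b)$ appearing in~\eqref{eq:calkowosc}.
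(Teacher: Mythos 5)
Your proposal follows essentially the same route as the paper's proof: isolate the $p^2$ double-integral part $Q=Q_1^2$ of $T(2b^{-1})$, substitute $w\mapsto y=(x_{wvs})_{uws}$ to exhibit $Q$ as an integral operator with nonnegative kernel $h$, and undo the substitution via Fubini to reduce positivity of $\int_\Omega\int_\Omega h\,\dd\mu\,\dd\mu$ to condition~\eqref{eq:calkowosc}. The only place where the paper is tighter is the final step: after carrying out the $x$-integral (which yields a factor $v(t-w^{-1})$ cancelling the $v^{-1}$ from the Jacobian) and then the last velocity integral via $\int_V\tm(w,v)\,\dd v=1$, one is left exactly with $\int_V\int_{\max(a,b/2)}^b u\,\tm(u,w)\,(t-w^{-1})\,\dd w\,\dd u$, so the equivalence with~\eqref{eq:calkowosc} drops out algebraically rather than via the geometric argument you sketch.
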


\begin{proof}
To simplify notation let, as before, \( s \coloneqq b^{-1} \) and
\( t \coloneqq 2s \).
By~\eqref{eq:T-small-t},
\[
T(s) = Q_1 + P_1
\]
where \( P_1 \) is a bounded positive operator in \( \lo \) and
\[
Q_1 f(x,v) = p \int_V wv^{-1}k(w,v) f(x_{wvs}, w) \dd w [x < sv], \qquad (x,v)
\in \Omega,\, f \in \lo.
\] 
Operator \( Q_1 \) describes a subpopulation of \( (x,v) \)-type cells, with
\( x < sv \), alive at time \( b^{-1} \), which are non-degenerate
daughters of cells from generation \( 0 \) (see definition~\eqref{eq:xwv}).
It follows that
\[
T(t) = Q + P
\]
where \( Q = (Q_1)^2 \) and \( P \) is another bounded positive operator in
\( \lo \).
Since
\[
(x_{wvs})_{uws} = 1 + \lrp{ 1 + xwv^{-1} - sw } uw^{-1} - su = x_{uvt} +
uw^{-1},
\]
we have
\begin{equation}
\label{eq:czesc-calkowaprim}
Q f(x,v) = p^2 \int_V \int_V uv^{-1} k(u,w) k(w,v) f\lrp{ x_{uvt} + uw^{-1},
  u} \lrb{ x_{wvs} < sw }
\lrb{x < sv} \dd u \dd w
\end{equation}
for \( (x,v) \in \Omega \).
Furthermore, a~little bit of algebra shows that if \( x_{wvs} < sw \) for some
\( w \in V \), which is the same as \( x_{wvt} < 0 \), then \( x < sv \).
Thus, \( [ x_{wvs} < sw ] [x < sv] \) in the formula above may be replaced by
\( [x_{wvt} < 0] \).
Applying the Fubini theorem and substituting
\begin{equation*}
y = y(w) %= (x_{wvt})_{uwt}
\coloneqq x_{uvt} + uw^{-1},
\end{equation*}
in~\eqref{eq:czesc-calkowaprim} we get
\begin{equation}
\label{eq:calkowa-podst}
Qf(x,v) = p^2 \int_V \int_{x_{uvt}+ub^{-1}}^{x_{uvt}+ua^{-1}} \tiy^2 v^{-1}
\tm(\tiy,v) \tm(u,\tiy) f(y,u) [x_{\tiy vt} < 0] \dd y \dd u,
\end{equation}
where \( \tiy = u(y-x_{uvt})^{-1} \). 
If \( a = 0 \), then by convention we take \( a^{-1} \coloneqq +\infty \) here.
For \( x \in I \) and \( u,v \in V \), we have
\[
(x_{uvt} + ub^{-1}, x_{uvt}+ua^{-1}) \cap \lrc{ y \in \R\colon x_{\tiy vt} < 0 }
\subseteq I.
\]
Indeed, \( x_{uvt} + ub^{-1} = x_{uvb^{-1}} > 1 - ub^{-1} > 0 \), and if
\( x_{\tiy vt} < 0 \), then \( \tiy(t-xv^{-1}) > 1 \) which implies
\( y < u(t-xv^{-1})+x_{uvt} = 1 \).
Therefore, we may rewrite~\eqref{eq:calkowa-podst} as
\begin{equation*}
Qf(x,v) = \int_V \int_I h(x,v,y,u) f(y,u) \dd y \dd u,
\end{equation*}
for
\[
h(x,v,y,u) \coloneqq p^2 \tiy^2 v^{-1} \tm(\tiy,v) \tm(u,\tiy) [y \in
\Lambda_{uvt}][x_{\tiy vt} < 0] \geq 0,
\]
where \( (x,v), (y,u) \in \Omega \) and
\( \Lambda_{uvt} = (x_{uvt}+ub^{-1},x_{uvt}+ua^{-1}) \).
Hence, we are left with proving that
\( \int_{\Omega} \int_{\Omega} h \dd \mu \dd \mu > 0 \).
We have
\[
\int_{\Omega} \int_{\Omega} h \dd \mu \dd \mu = \int_{\Omega} Q \ind_\Omega \dd
\mu.
\]
Let
\[
d \coloneqq \max\lrp{a,t^{-1}} = \max(a, b/2)
\]
and note that \( x \in I \) and \( x_{wvt} < 0 \) is equivalent to
\( d < w < b \) and \( 0 < x < tv - vw^{-1} \).
Thus by~\eqref{eq:czesc-calkowaprim} we obtain (recall that
\( \lrb{ x_{wvs} < sw } \lrb{x < sv } \) may be replaced by \( [x_{wvt} < 0] \))
\begin{align*}
  \frac{1}{p^2}\int_{\Omega} \int_{\Omega} h \dd \mu \dd \mu
  &= \int_V \int_I \int_V \int_V uv^{-1}
    \tm(u,w) \tm(w,v) [x_{wvt} < 0] \dd u \dd w \dd x \dd v\\
  &= \int_V \int_d^b \int_V \int_0^{tv-vw^{-1}} uv^{-1}
    \tm(u,w) \tm(w,v) \dd x \dd v \dd w \dd u\\
  &= \int_V \int_d^b \int_V u \tm(u,w) \tm(w,v)
    \lrp{t-w^{-1}} \dd v \dd w
    \dd u\\
  &= \int_V \int_d^b u \tm(u,w) \lrp{t-w^{-1}} \dd w \dd u,
\end{align*}
the second equality resulting from the Fubini theorem, and the third
from~\eqref{eq:rozklad-k}.
Since the function \( V \times \lrp{d,b} \ni (u,w) \mapsto u(t-w^{-1}) \) is
strictly positive, the integral
\( \int_V \int_d^b u \tm(u,w) (t-w^{-1}) \dd w \dd u \) is positive if and only
if condition~\eqref{eq:calkowosc} holds.
\end{proof}

\begin{lemma}
\label{lem:getosc-calkowosc}
If there exits a~strictly positive stationary density \( \invm \in D_{L^1(V)} \)
for the kernel \( \tm \), then
\begin{equation*}
\int_V \int_{U} \tm(w,v) \dd v \dd w > 0
\end{equation*}
for every Lebesgue measurable subset \( U \) of \( V \) with positive measure.
\end{lemma}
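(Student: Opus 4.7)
The plan is to exploit the stationarity identity $K\invm = \invm$ by integrating it against the indicator of $U$ and then argue by contradiction via Fubini's theorem.

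First, I would write out the defining equation for $\invm$ in the form
\[
\invm(v) = \int_V \tm(w,v) \invm(w) \dd w
\]
valid for almost every $v \in V$, and integrate both sides over $U$. Applying Fubini's theorem (the integrand is nonnegative, so no issue with integrability) yields
\[
\int_U \invm(v) \dd v = \int_V \invm(w) \lrp[\bigg]{ \int_U \tm(w,v) \dd v } \dd w.
\]

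Next, I would observe that the left-hand side is strictly positive: since $\invm > 0$ almost everywhere on $V$ and $U$ has positive Lebesgue measure, we have $\int_U \invm(v) \dd v > 0$.

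Finally, I would argue by contradiction. Suppose
\[
\int_V \int_U \tm(w,v) \dd v \dd w = 0.
\]
Then, again by Fubini, the nonnegative function $w \mapsto \int_U \tm(w,v) \dd v$ must vanish for almost every $w \in V$. Multiplying by $\invm(w)$ and integrating gives
\[
\int_V \invm(w) \lrp[\bigg]{ \int_U \tm(w,v) \dd v } \dd w = 0,
\]
which combined with the identity above forces $\int_U \invm = 0$, a contradiction. There is no real obstacle here; the only thing to be careful about is invoking Fubini on the nonnegative integrand $(w,v) \mapsto \tm(w,v) \invm(w)$, which is justified since $\int_V \int_V \tm(w,v) \invm(w) \dd v \dd w = \int_V \invm(w) \dd w = 1 < +\infty$ by \eqref{eq:rozklad-k}.
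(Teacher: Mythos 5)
Your argument is correct and is essentially the same as the paper's: integrate the stationarity identity $K\invm = \invm$ over $U$, apply Fubini to the nonnegative integrand, and derive a contradiction between $\int_U \invm > 0$ (from strict positivity of $\invm$ and $\leb(U)>0$) and the assumed vanishing of the double integral. The only cosmetic difference is that the paper concludes $\tm = 0$ a.e.\ on $V \times U$ directly, while you phrase it via the vanishing of $w \mapsto \int_U \tm(w,v)\dd v$; these are interchangeable.
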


\begin{proof}
Assume, contrary to our claim, that
\begin{equation}
\label{eq:zero}
\int_V \int_U \tm(w,v) \dd v \dd w = 0.
\end{equation}
for a set \( U \) of positive measure.
However, for almost every \( v \in V \) we have
\begin{equation*}
\invm(v) = \int_V \tm(w,v) \invm(w) \dd w.
\end{equation*}
Integrating this over \( U \),
\begin{equation}
\label{eq:calka-z-gestosci}
\int_U \invm(v) \dd v = \int_V \int_{U} \tm(w,v) \invm(w) \dd v \dd
w
\end{equation}
by the Fubini theorem.
Since \( \tm \) is nonnegative, \eqref{eq:zero} implies \( \tm = 0 \) almost
everywhere on \( V \times U \).
This means that
\[
\int_{U} \invm(v) \dd v = 0
\]
by~\eqref{eq:calka-z-gestosci}, which contradicts strict positivity of
\( \invm \).
This completes the proof.
\end{proof}

Now we are ready to prove the main theorem.

\begin{proof}[Proof of Theorem~\textup{\ref{thm:asymptotic}}]
We know by Theorem~\ref{thm:norma-polgrupy}~\ref{item:5} that the Rotenberg
semigroup \( \sst \) is Markov, hence in particular substochastic.
By Lemma~\ref{lem:rownowaznosc-gestosci} there exists a~unique invariant density
\( f_{*} \) for \( \sst \).
Moreover, by~\eqref{eq:zaleznosc-gestosci}, \( f_{*} \) is strictly positive
since so is \( \invm \).
Furthermore, Lemma~\ref{lem:getosc-calkowosc} with
\( U = \lrp[\big]{ \max(a,b/2), b } \) implies that \( \sst \) is partially
integral by Lemma~\ref{lem:partially-integral}.
Thus, we may apply Theorem~\ref{thm:pich} which completes the proof.
\end{proof}

With the described approach asymptotic stability of the Rotenberg semigroup is
more difficult to prove if \( p + q > 1 \).
In this case the semigroup \( \sst \) needs not be substochastic and we should
first rescale it, taking
\[
S(t) \coloneqq \e^{-\omega_0(\sst) t} T(t), \qquad t \geq 0.
\]
Then the semigroup \( \sss = \sems \) is bounded and its generator equals
\( A - \omega_0(\sst) \), where \( A \) is the generator of \( \sst \).
Arguing as in the proofs of Lemma~\ref{lem:gestosci-sa-stale} and
Lemma~\ref{lem:rownowaznosc-gestosci}, \( f_{*} \in D_{\lo} \) is an invariant
density for \( \sss \) if and only if
\[
v \lrp[\big]{1 - q \e^{-\omega_0(\sst)v^{-1}}} f_{*}(x,v) = p \int_V w
\tm(w,v) \e^{-\omega_0(\sst)w^{-1}} f_{*}(x,w) \dd w
\]
for almost every \( (x,v) \in \Omega \).
The problem here is that we do not have explicit formula for
\( \omega_0(\sst) \) what we essentially discussed at the end of
Section~\ref{sec:growth-estimates}.

Finally, let us state a~result describing the asymptotic behaviour of the
Rotenberg semigroup in the case \( p + q < 1 \).

\begin{theorem}
\label{thm:asymp-spadek}
Suppose that \( p + q < 1 \).
\begin{enumerate}
  \item\label{item:mocno} If \( a = 0 \), then \( \sst \) converges strongly to
zero, that is,
\[
\lim_{t \to +\infty} T(t)f = 0, \qquad f \in \lo.
\]
  \item\label{item:normowo} If \( a > 0 \), then \( \sst \) converges to zero in
the operator norm, that is,
\[
\lim_{t \to +\infty} \norm{ T(t) }_{\lino\lo} = 0.
\]
\end{enumerate}
\end{theorem}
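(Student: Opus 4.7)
Part~\ref{item:normowo} is immediate: by Theorem~\ref{thm:norma-polgrupy-leb}~\ref{item:4}, for $t \geq a^{-1}$ we have $\norm{T(t)}_{\lino\lo} \leq (p+q)^{\floor{ta}}$, which tends to $0$ since $p+q<1$ and $\floor{ta}\to\infty$ as $t\to\infty$.

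For part~\ref{item:mocno}, assume $a=0$. For any $f \in \lo$, the positivity of $T(t)$ yields $\abs{T(t)f} \leq T(t)\abs{f}$, and therefore
\[
\norm{T(t)f}_{\lo} \leq \int_\Omega T(t)\abs{f}\dmu = \int_\Omega \abs{f}\, T^{*}(t)\ind_\Omega \dmu.
\]
Because $T^{*}(t)\ind_\Omega \leq \ind_\Omega$ almost everywhere (this follows from~\eqref{eq:dualny-na-1} for $t \in [0,b^{-1}]$ and extends to all $t \geq 0$ by the semigroup property together with Theorem~\ref{thm:norma-polgrupy}~\ref{item:grube}), and $\abs{f} \in \lo$, the dominated convergence theorem reduces the claim to showing that $T^{*}(t)\ind_\Omega(x,v) \to 0$ as $t\to\infty$ for almost every $(x,v)\in\Omega$.

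To establish this pointwise decay, I would exploit the probabilistic structure of~\eqref{eq:wzor-na-dualny}. Since $\int_V \mess(v,\dd w) = p+q$ (by~\eqref{eq:rozklad-k}), the normalized measure $(p+q)^{-1}\mess(v,\cdot)$ is a probability measure on $V$ and defines a Markov chain $(V_n)_{n\geq 0}$ on $V$ with $V_0=v$ and $V_{n+1}\mid V_n = u \sim (p+q)^{-1}\mess(u,\cdot)$. Set $\tau_1 \coloneqq (1-x)/v$, $\tau_{n+1} \coloneqq \tau_n + 1/V_n$ for $n\geq 1$, and $N(t;x,v) \coloneqq \sup\lrc{n\geq 0 : \tau_n \leq t}$. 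Iterating~\eqref{eq:wzor-na-dualny} in steps of length $b^{-1}$ and applying Fubini's theorem yields the representation
\[
T^{*}(t)\ind_\Omega(x,v) = \E\bigl[(p+q)^{N(t;x,v)}\bigr],
\]
which is verified by induction on $\floor{tb}$, the base case being~\eqref{eq:dualny-na-1}. Because $V_n < b$ for every $n$, we have $\tau_n > (n-1)/b \to \infty$, so $N(t;x,v) \to \infty$ almost surely as $t\to\infty$. Combined with $(p+q)^{N(t;x,v)} \leq 1$ and $p+q<1$, dominated convergence applied to the outer expectation gives $\E[(p+q)^{N(t;x,v)}]\to 0$, completing the proof.

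The main obstacle will be the rigorous verification of the representation $T^{*}(t)\ind_\Omega(x,v) = \E[(p+q)^{N(t;x,v)}]$, which requires a careful induction using~\eqref{eq:wzor-na-dualny} and Fubini's theorem. An entirely analytical alternative is to decompose
\[
T^{*}(nb^{-1})\ind_\Omega = \sum_{m=0}^{n}(p+q)^m \psi_{m,n}
\]
into nonnegative functions $\psi_{m,n} \in \linf$ with $\sum_m \psi_{m,n} \leq \ind_\Omega$, and to show that $\psi_{m,n}(x,v)\to 0$ as $n\to\infty$ for each fixed $m$ and a.e.\ $(x,v)$ with $v>0$; the constraint $V \subseteq (0,b)$ ensures that division times diverge, and the summability of $\sum_m (p+q)^m$ then allows one to pass to the limit inside the sum by dominated convergence.
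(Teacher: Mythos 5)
Your proof of part~\ref{item:normowo} coincides with the paper's. For part~\ref{item:mocno}, however, you take a genuinely different route. You pass to the dual semigroup, reduce (by positivity and duality) to the pointwise a.e.\ decay $T^{*}(t)\ind_\Omega(x,v)\to 0$, and then invoke a probabilistic representation $T^{*}(t)\ind_\Omega(x,v)=\E[(p+q)^{N(t;x,v)}]$ with $N$ a division-counting functional of a Markov chain on $V$. This is a valid strategy: the chain of reductions is correct (the domination $T^{*}(t)\ind_\Omega\leq\ind_\Omega$ indeed follows from positivity and Theorem~\ref{thm:norma-polgrupy}~\ref{item:grube}, and $N(t;x,v)\to\infty$ a.s.\ because each holding time $1/V_n>b^{-1}$ is bounded below, so one may apply dominated convergence twice). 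However, you acknowledge the main technical work --- the inductive verification of the renewal-type identity --- without carrying it out, so the argument as written is incomplete.

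The paper's proof avoids the dual operator entirely and is considerably shorter: it uses the semi-explicit Kelvin formula $T(t)f(x,v)=\tif(x-tv,v)$ together with the estimate~\eqref{eq:norma-ext-ogr}, which shows that for $p+q<1$ the boundary extension $\tif$ is absolutely integrable over the \emph{whole} strip $\tio$, with $\normtio{\tif}\leq(1-p-q)^{-1}\normo{f}$ at $\omega=0$. Then
\[
\norm{T(nb^{-1})f}_{\lo}=\int_V\int_{-nvb^{-1}}^{1-nvb^{-1}}\abs{\tif(x,v)}\dd x\dd v
\]
is the integral of a fixed $L^1(\tio)$ function over a window that slides to $-\infty$ for every $v>0$, so it vanishes by dominated convergence. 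This argument also reveals that the result holds in the general setting $(V,\nu)$ (see the remark following the theorem), which your pointwise approach would obtain only after extra care. What your route buys instead is a sharper and more interpretable conclusion --- pointwise a.e.\ decay of $T^{*}(t)\ind_\Omega$ with an explicit rate $\E[(p+q)^{N}]$ tied to the number of completed mitoses --- but at the price of proving a nontrivial Feynman--Kac-type identity that the paper's method of images renders unnecessary.
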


Note that in the case \( p + q < 1 \) the cell population gradually dies out.
Theorem~\ref{thm:norma-polgrupy-leb}~\ref{item:3} reflect the fact that in the
case \( a = 0 \) for every \( t \geq 0 \) there may exist cells that mature very
slowly and did not yet divide.
In the case \( a > 0 \) all parts of the population die out uniformly fast.

\begin{proof}
Part~\ref{item:normowo} follows directly by
Theorem~\ref{thm:norma-polgrupy-leb}~\ref{item:4}, and hence we assume that
\( a = 0 \).
Since \( \norm{ T(t) }_{\lino\lo} \leq 1 \) for each \( t \geq 0 \) by
Theorem~\ref{thm:norma-polgrupy}~\ref{item:grube}, it is sufficient to show that
\begin{equation}
\label{eq:mocny-ciag}
\lim_{n \to +\infty} T \lrp{ nb^{-1} } f = 0, \qquad f \in \lo.
\end{equation}

For \( n \geq 1 \) and \( f \in \lo \) by~\eqref{eq:wzor-na-polgrupe} we have
\begin{equation}
\label{eq:norma-polgrupy-mala}
\begin{split}
\norm{ T \lrp{ nb^{-1} }f }_{\lo} &= \int_V \int_I \abs{
  \tif(x-nvb^{-1},v) } \dd x \dd v\\
&= \int_V \int_{-nvb^{-1}}^{1-nvb^{-1}} \abs{ \tif(x,v) } \dd x \dd v.
\end{split}
\end{equation}
However, by~\eqref{eq:norma-ext-ogr} with \( \omega = 0 \),
\[
\int_{\tio} \abs{ \tif } \dd \mu < +\infty,
\]
provided \( p + q < 1 \).
Thus~\eqref{eq:norma-polgrupy-mala} and the Lebesgue dominated convergence
theorem imply~\eqref{eq:mocny-ciag}.
\end{proof}

\begin{remark}
The proof of Theorem~\ref{thm:asymp-spadek}~\ref{item:mocno} still works in the
case where \( (V,\nu) \) is any measure space with \( V \subseteq (a,b) \) and
\( a \geq 0 \).
This is obvious since~\eqref{eq:norma-ext-ogr} holds in this general setup.
Hence, if \( p + q < 1 \), then the Rotenberg semigroup converges strongly to
zero even if we do not assume that \( V = (a,b) \) with the Lebesgue measure.
\end{remark}

\section{Discussion of assumptions}
\label{sec:disc-assumpt}

In our last theorem we discuss the relation between
conditions~\ref{item:irr}--\ref{item:bounded} used in
Theorem~\ref{thm:boulanouara} and~\ref{item:density}--\ref{item:integrable} used
in Theorem~\ref{thm:asymptotic}.
As a consequence of this result, we see in particular that Boulanouar's
assumptions are stronger than our condition~\ref{item:density}.
In this context, it becomes clear that it is~\ref{item:integrable} that is
crucial for our analysis.
As we have seen, \ref{item:integrable} is an assumption of existence and
uniqueness of an invariant density for the Rotenberg semigroup.
See the upcoming~\cite{rudpich17} for a~way to deduce existence and uniqueness
of an invariant density from properties of a semigroup.

\begin{theorem}
\label{thm:slabszy-warunek}
For the kernel \( \tm \) the following is true.
\begin{enumerate}
  \item\label{item:b-mocniejszy} If
conditions~\ref{item:irr}--\ref{item:bounded} hold, then so
does~\ref{item:density}, but not necessarily~\ref{item:integrable}.
  \item\label{item:b-slabszy} If condition~\ref{item:density} holds, then so
does~\ref{item:irr}, but not necessarily~\ref{item:bounded} even if we
additionally assume~\ref{item:integrable}.
\end{enumerate}
\end{theorem}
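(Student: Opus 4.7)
My plan is to handle each implication in Theorem~\ref{thm:slabszy-warunek} separately, together with its counterexample.

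For part~\ref{item:b-mocniejszy}, I would first produce an invariant density using~\ref{item:bounded}. Since $\|k\|_{L^\infty(V\times V)} < +\infty$ and $\leb(V) = b-a < +\infty$, the operator $K$ maps $L^1(V)$ boundedly into $L^\infty(V)$ with $\|Kf\|_{L^\infty(V)} \leq \|k\|_{L^\infty} \|f\|_{L^1(V)}$. Starting from the constant density $g_0 \coloneqq \ind_V/(b-a)$, the Cesàro averages $\bar K_N g_0 \coloneqq N^{-1}\sum_{n=0}^{N-1} K^n g_0 \in D_{L^1(V)}$ are uniformly bounded in $L^\infty(V)$, so Banach--Alaoglu yields a weak$^*$-cluster point $\invm \in L^\infty(V) \subseteq L^1(V)$, nonnegative with $\int_V \invm = 1$. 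The identity $K\bar K_N g_0 - \bar K_N g_0 = N^{-1}(K^N g_0 - g_0)$ tends to zero in $L^\infty(V)$, and $K$ is weak$^*$-continuous on $L^\infty(V)$ (being the adjoint of the bounded operator $f \mapsto \int_V k(\cdot,v) f(v) \dd v$ on $L^1(V)$), so $K\invm = \invm$. I would then upgrade to strict positivity via~\ref{item:irr}: if an invariant density vanishes on a set $B$ of positive measure with complement $A$ also of positive measure, then $\invm > 0$ on $A$ together with $K\invm = \invm$ forces $k = 0$ a.e.\ on $A \times B$, contradicting~\ref{item:irr}. Uniqueness follows by a lattice decomposition: the positive part $h_+ \coloneqq (\invm_1 - \invm_2)_+$ of the difference of two invariant densities satisfies $Kh_+ \geq h_+$, whence $Kh_+ = h_+$ by the Markov property; if $\invm_1 \neq \invm_2$, then $h_+/\norm{h_+}_{L^1(V)}$ is an invariant density vanishing on a set of positive measure, contradicting the strict positivity just established. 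For the accompanying non-implication, take $V = (0,1)$ with Lebesgue measure and $k \equiv 1$: both~\ref{item:irr} and~\ref{item:bounded} are trivial, $\invm \equiv 1$ is the unique stationary density, yet $v^{-1}\invm(v) = v^{-1}$ is not integrable on $(0,1)$.

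For part~\ref{item:b-slabszy}, I would argue by contradiction: assume~\ref{item:density} but that~\ref{item:irr} fails for some measurable $A \subseteq V$ with $0 < \leb(A) < \leb(V)$; set $B \coloneqq V \setminus A$, so that $k = 0$ a.e.\ on $B \times A$. Integrating $\invm(v) = \int_V k(w,v) \invm(w) \dd w$ over $v \in A$ and using Fubini gives
\[
\int_A \invm(v) \dd v = \int_A \invm(w) P_A(w) \dd w, \qquad P_A(w) \coloneqq \int_A k(w,v) \dd v,
\]
with the $B$-contribution dropping out. Since $P_A \leq 1$ and $\invm > 0$ a.e.\ on $A$, equality forces $P_A = 1$ a.e.\ on $A$, so $k = 0$ a.e.\ on $A \times B$ as well. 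A direct computation then shows that $\invm_A \coloneqq \invm \ind_A / \int_A \invm$ is a stationary density for $k$ different from $\invm$, contradicting uniqueness. For the non-implication to~\ref{item:bounded} (even under~\ref{item:integrable}), take $V = (1,2)$ and $k(w,v) \coloneqq g(v)$, where $g(v) \coloneqq (2\sqrt{v-1})^{-1}$ is a probability density on $V$ unbounded near $v = 1$. Then $Kf = g \int_V f$, so $g$ is the unique stationary density; it is strictly positive, and $v^{-1} g(v)$ is integrable since $v \geq 1$ on $V$. Thus~\ref{item:density} and~\ref{item:integrable} hold, but~\ref{item:bounded} fails.

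The main obstacle in this plan is the existence step in part~\ref{item:b-mocniejszy}: the operator $K$ lacks direct compactness on $L^1(V)$, so the $L^1 \to L^\infty$ smoothing provided by~\ref{item:bounded} must be leveraged through a weak$^*$-compactness argument on Cesàro averages. The remainder of the proof — the positivity/uniqueness arguments in part~\ref{item:b-mocniejszy} and the block-decoupling argument in part~\ref{item:b-slabszy} — reduces to careful manipulations of the identity $K\invm = \invm$ together with the support structure of $k$.
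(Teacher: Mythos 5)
Your proof is correct, and part~\ref{item:b-slabszy} essentially tracks the paper's argument: the paper works with $f \coloneqq \ind_U \invm$, shows $Kf \le f$ and $\|Kf\|_{L^1(V)} = \|f\|_{L^1(V)}$, hence $Kf = f$; your derivation that $P_A = 1$ a.e.\ on $A$, hence $k = 0$ a.e.\ on $A \times B$, is an equivalent computation. Part~\ref{item:b-mocniejszy}, however, uses a genuinely different mechanism for the existence--uniqueness--positivity triple. The paper shows that under~\ref{item:bounded} the operator $K$ is weakly compact (Dunford--Pettis criterion of uniform integrability), so $K^2$ is compact, and then invokes the Jentzsch theorem together with irreducibility~\ref{item:irr} to obtain the Perron eigenvector in one stroke, identifying $r(K)=1$ via~\eqref{eq:rozklad-k}. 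You instead run a Krylov--Bogoliubov argument: under~\ref{item:bounded} the Ces\`aro averages $\bar K_N g_0$ are bounded in $L^\infty(V)$, Banach--Alaoglu gives a weak$^*$ cluster point, and weak$^*$-continuity of $K$ on $L^\infty(V)$ --- as the adjoint of $f \mapsto \int_V k(\cdot,v)f(v)\dd v$, which is bounded on $L^1(V)$ again thanks to~\ref{item:bounded} --- yields invariance. Strict positivity and uniqueness are then obtained by direct manipulation of $K\invm = \invm$ and a lattice argument on the positive part of the difference of two candidate densities. The paper's route is shorter because the Jentzsch theorem delivers all three properties at once, but it relies on compactness machinery and Perron--Frobenius theory for positive operators; yours stays within soft $L^p$-duality at the cost of three separate steps. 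Both of your counterexamples are valid; for the failure of~\ref{item:bounded} under~\ref{item:integrable} you place $V=(1,2)$ so that~\ref{item:integrable} is automatic (cf.\ Remark~\ref{rem:a0}), whereas the paper uses $V=(0,1)$ and checks~\ref{item:integrable} by hand.
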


\begin{proof}
For part~\ref{item:b-mocniejszy}, in order to show that~\ref{item:irr}
and~\ref{item:bounded} implies~\ref{item:density} we first note that the
operator \( K \) defined by~\eqref{eq:kernel-op} is weakly compact
provided~\ref{item:irr} and~\ref{item:bounded}.
Indeed, let \( S_V \) be the closed unit sphere in \( L^1(V) \).
We will show that \( KS_V \) is relatively weakly compact, that is, the weak
closure of \( KS_V \) is compact in the weak topology of \( L^1(V) \).
By the Dunford-Pettis theorem~\cite[Theorem 5.2.9]{albiac} this holds if and
only if \( KS_V \) is composed of uniformly integrable functions.
Let \( U \) be a~measurable subset of \( V \).
For \( f \in S_V \) by~\ref{item:bounded} we have
\[
\int_{U} \abs{ Kf(v) } \dd v \leq \norm{ k }_{L^{\infty}(V \times V)} \int_{U}
\int_V \abs{ f(w) } \dd w \dd v = \norm{ k }_{L^{\infty}(V \times V)} \leb(U),
\]
where \( \norm{ k }_{L^{\infty}(V \times V)} \) is the essential bound of
\( k \) on \( V \times V \).
This implies uniform integrability of functions belonging to \( KS_V \), and
completes the proof of weak compactness of \( K \).
Hence \( K^2 \) is a~compact operator by \cite[Corollary~VI.8.13]{dunford}, the
result also due to Dunford and Pettis. 
Therefore, by~\ref{item:irr} and the Jentzsch theorem~\cite[V.6.6]{schaefer},
for the spectral radius \( r(K) \) of \( K \) there exists a~unique density
\( \invm \in D_{L^1(V)} \) satisfying \( K\invm = r(K) \invm \), and moreover
\( \invm > 0 \) almost everywhere on \( V \).
However, by~\eqref{eq:rozklad-k} it follows that \( r(K) = 1 \), and
thus~\ref{item:density} holds.

To see that~\ref{item:irr} and~\ref{item:bounded} do not
imply~\ref{item:integrable} we take \( V \coloneqq (0,1) \), and let \( \tm \)
be equal identically \( 1 \) on \( V \times V \).
Then~\ref{item:irr} and~\ref{item:bounded} are satisfied and \( \invm \)
equaling identically \( 1 \) on \( V \) is the stationary density for \( \tm \),
and yet \( \int_V v^{-1} \invm(v) \dd v = +\infty \).

For part~\ref{item:b-slabszy}, assume that~\ref{item:density} holds, and let
\( \invm \) be the unique stationary density for \( \tm \).
Suppose, contrary to our claim, that there exists a~measurable set
\( U \subset V \), such that
\( \leb(U), \leb(V \setminus U) > 0 \) and
\begin{equation}
\label{eq:zerok}
\int_{V \setminus U} \int_{U} \tm(w,v) \dd v \dd w = 0.
\end{equation}
Let \( f \in L^1(V) \) be defined as \( f \coloneqq \ind_{U} \invm \),
where \( \ind_{U} \) is the indicator function of \( U \).
Then
\begin{equation}
\label{eq:kff}
Kf \leq K\invm = \invm
\end{equation}
almost everywhere on \( V \), since \( K \) is a~positive operator.
Moreover
\[
Kf(v) = \int_V \tm(w,v) f(w) \dd w = \int_{U} \tm(w,v) \invm(w) \dd w,
\]
and hence
\[
\int_{V \setminus U} Kf(v) \dd v = 0,
\]
since by~\eqref{eq:zerok} we have \( \tm = 0 \) almost everywhere on
\( (V \setminus U) \times U \).
Because \( Kf \) is nonnegative, this means that \( Kf = 0 \) almost everywhere
on \( V \setminus U \).
Thus, by~\eqref{eq:kff}, we obtain \( Kf \leq f \) almost everywhere on \( V \).
However, by~\eqref{eq:rozklad-k} and the Fubini theorem,
\[
\norm{ Kf }_{L^1(V)} = \norm{ f }_{L^1(V)},
\]
which implies \( Kf = f \) almost everywhere on \( V \).
Then \( g \coloneqq f/\norm{ f }_{L^1(V)} \) is a~stationary density for
\( \tm \), and \( g \neq \invm \), which contradicts~\ref{item:density}.
This contradiction proves that~\ref{item:density} implies~\ref{item:irr}.

Finally, to show that~\ref{item:density} and~\ref{item:integrable} do not
imply~\ref{item:bounded}, let \( V \coloneqq (0,1) \), and
\[
\tm(w,v) \coloneqq \frac{3}{4} \cdot \frac{v}{\sqrt{1-v}}, \qquad w,v \in V.
\]
Then \( \tm \) satisfies~\eqref{eq:rozklad-k}.
Moreover, \( \invm \in L^1(V) \) defined by
\[
\invm(v) \coloneqq \frac{3}{4} \cdot \frac{v}{\sqrt{1-v}}, \qquad v \in V,
\]
is a~stationary density for \( \tm \).
Next, if \( f \) is a~stationary density for \( \tm \), then
\[
f(v) = \int_V \tm(w,v) f(w) \dd w = \frac{3}{4} \cdot \frac{v}{\sqrt{1-v}}
\int_V f(w) \dd w = \frac{3}{4} \cdot \frac{v}{\sqrt{1-v}}, \qquad v \in V.
\]
Hence, \( \invm \) is a unique stationary density for \( \tm \), and it is
strictly positive, that is, \ref{item:density} holds for \( \tm \).
Furthermore, it is easy to check that~\ref{item:integrable} is satisfied,
however \( k \) is not essentially bounded and~\ref{item:bounded} fails to hold.
\end{proof}

\section*{Acknowledgements}

I~would like to thank Professors A.~Bobrowski, J.~Banasiak and R.~Rudnicki whose
remarks helped to improve the paper.

\bibliographystyle{amsplain}
\bibliography{references}

\providecommand{\bysame}{\leavevmode\hbox to3em{\hrulefill}\thinspace}
\providecommand{\MR}{\relax\ifhmode\unskip\space\fi MR }
% \MRhref is called by the amsart/book/proc definition of \MR.
\providecommand{\MRhref}[2]{%
  \href{http://www.ams.org/mathscinet-getitem?mr=#1}{#2}
}
\providecommand{\href}[2]{#2}
\begin{thebibliography}{10}

\bibitem{albiac}
F.~Albiac and N.~J. Kalton, \emph{Topics in {B}anach {S}pace {T}heory}, vol.
  233, Springer, New York, 2006.

\bibitem{banasiak15}
J.~Banasiak and A.~Falkiewicz, \emph{Some transport and diffusion processes on
  networks and their graph realizability}, Appl. Math. Lett. \textbf{45}
  (2015), 25--30.

\bibitem{banasiak16}
J.~Banasiak, A.~Falkiewicz, and P.~Namayanja, \emph{Asymptotic state lumping in
  transport and diffusion problems on networks with applications to population
  problems}, Math. Models Methods Appl. Sci. \textbf{26} (2016), no.~2,
  215--247.

\bibitem{bobrowskiFA}
A.~Bobrowski, \emph{Functional {A}nalysis for {P}robability and {S}tochastic
  {P}rocesses}, Cambridge University Press, Cambridge, 2005.

\bibitem{bobrowski10b}
\bysame, \emph{Generation of cosine families via {L}ord {K}elvin's method of
  images}, J. Evol. Equ. \textbf{10} (2010), no.~3, 663--675.

\bibitem{bobrowski10a}
\bysame, \emph{Lord {K}elvin's method of images in semigroup theory}, Semigroup
  Forum \textbf{81} (2010), no.~3, 435--445.

\bibitem{bobgre}
A.~Bobrowski and A.~Gregosiewicz, \emph{A general theorem on generation of
  moments-preserving cosine families by {L}aplace operators in {$C[0,1]$}},
  Semigroup Forum \textbf{88} (2014), no.~3, 689--701.

\bibitem{bobgremur}
A.~Bobrowski, A.~Gregosiewicz, and M.~Murat, \emph{Functionals-preserving
  cosine families generated by {L}aplace operators in {$C[0,1]$}}, Discrete and
  Continuous Dynamical Systems - Series B \textbf{20} (2015), no.~7,
  1877--1895.

\bibitem{boulanouar}
M.~Boulanouar, \emph{A mathematical study for a {R}otenberg model}, J. Math.
  Anal. Appl. \textbf{265} (2002), no.~2, 371--394.

\bibitem{dunford}
N.~Dunford and J.~T. Schwartz, \emph{Linear {O}perators. {P}art {I}}, John
  Wiley \& Sons, Inc., New York, 1988.

\bibitem{engelnagel}
K.-J. Engel and R.~Nagel, \emph{One-{P}arameter {S}emigroups for {L}inear
  {E}volution {E}quations}, vol. 194, Springer-Verlag, New York, 2000.

\bibitem{goldstein}
J.~Goldstein, \emph{Semigroups of {L}inear {O}perators and {A}pplications},
  Oxford University Press, New York, 1985.

\bibitem{knuth}
R.~L. Graham, D.~E. Knuth, and O.~Patashnik, \emph{Concrete {M}athematics},
  Addison-Wesley Publishing Company, 1994.

\bibitem{hormander}
L.~H{\"o}rmander, \emph{The {A}nalysis of {L}inear {P}artial {D}ifferential
  {O}perators. {I}}, second ed., vol. 256, Springer-Verlag, Berlin, 1990.

\bibitem{lasota}
A.~Lasota and M.~C. Mackey, \emph{Chaos, {F}ractals, and {N}oise}, second ed.,
  Springer-Verlag, New York, 1994.

\bibitem{rudpich17}
K.~Pich{\'o}r and R.~Rudnicki, \emph{Asymptotic decomposition of substochastic
  semigroups and applications}, Stoch. Dyn., to appear.

\bibitem{pichrud}
\bysame, \emph{Asymptotic decomposition of substochastic operators and
  semigroups}, J. Math. Anal. Appl. \textbf{436} (2016), no.~1, 305--321.

\bibitem{rotenberg}
M.~Rotenberg, \emph{Transport theory for growing cell populations}, J. Theoret.
  Biol. \textbf{103} (1983), no.~2, 181--199.

\bibitem{schaefer}
H.~H. Schaefer, \emph{Banach {L}attices and {P}ositive {O}perators},
  Springer-Verlag, New York-Heidelberg, 1974.

\end{thebibliography}

\end{document}